\documentclass[12pt]{article}
\usepackage{amssymb,amsfonts,amsmath}
\usepackage{graphics}
\usepackage{graphicx}
\usepackage{epsfig}
\usepackage{color}
\usepackage{enumerate}
\usepackage{amsthm}

\newcommand\CC{{\mathbb C}}

\newcommand\DD{{\mathbb D}}
\newcommand\NN{{\mathbb N}}
\newcommand\RR{{\mathbb R}}

\newcommand\TT{{\mathbb T}}
\newcommand\LH{{\mathcal L}({\cal H})}

\newcommand\ZZ{{\mathbb Z}}

\def\Im{\mathop{\rm  Im}\nolimits}

\def\beq{\begin{equation}}
\def\eeq{\end{equation}}
\newtheorem{thm}{Theorem}[section]
\newtheorem{prop}[thm]{Proposition}
\newtheorem{defn}[thm]{Definition}
\newtheorem{lem}[thm]{Lemma}
\newtheorem{cor}[thm]{Corollary}
\newtheorem{rem}[thm]{Remark}

\newtheorem{ex}[thm]{Example}

\newcommand\beginpf{\noindent {\bf Proof:} \quad}

\newcommand\re{\mathop{\rm Re}\nolimits}


\newcommand\esssup{\mathop{\rm ess\ sup}\nolimits}

\def\beginpf{\begin{proof}}
\def\endpf{\end{proof}}

\def\N{{\mathbb N}}
\def\R{{\mathbb R}}
\def\T{{\mathbb T}}

\newcommand\IM{\mathop{\rm Im}\nolimits}
\renewcommand\phi{\varphi}

\newcommand\D{\mathcal {D}}

\newcommand{{\centering\input{.pstex_t}}}[1]{{\centering\input{#1.pstex_t}}}

\title{Analyticity and compactness of semigroups of composition operators}
\author{ C. Avicou\thanks{I. C. J., UFR de
Math\'ematiques,  Universit\'e  Lyon~1, 43 bld. du 11/11/1918,
69622 Villeurbanne Cedex, France.
  \protect\linebreak[3]
{\tt avicou@math.univ-lyon1.fr}.},
 I. Chalendar\thanks{I. C. J., UFR de
Math\'ematiques,  Universit\'e  Lyon~1, 43 bld. du 11/11/1918,
69622 Villeurbanne Cedex, France.
  \protect\linebreak[3]
{\tt chalendar@}math.univ-lyon1.fr}
\ and   J.R. Partington\thanks{School of
Mathematics,
University of Leeds, Leeds LS2 9JT, U.K.
\protect\linebreak[3]
{\tt J.R.Partington@leeds.ac.uk}.}
}

\begin{document}

\maketitle
\begin{abstract}
	This paper provides  a complete characterization of quasicontractive   groups and analytic $C_0$-semigroups on  
	Hardy and  Dirichlet space on the unit disc with a prescribed generator of the form $Af=Gf'$. 
	In the analytic case we also give a complete characterization 
	of immediately compact semigroups. When the analyticity fails, we obtain sufficient conditions for 
	compactness and membership in the trace class. Finally, we analyse the case where the unit disc is replaced by 
	the right-half plane, where the results are drastically different.     
\end{abstract}

\noindent\textsc{Mathematics Subject Classification} (2000):
Primary: 47D03, 47B33
Secondary:  47B44, 30H10

\noindent\textsc{Keywords}:
analytic semigroup, compact semigroup, semiflow,  Hardy space, Dirichlet space,
composition operators. 

\section{Introduction}

Semigroups of composition operators acting on the Hardy space $H^2(\DD)$ or the
Dirichlet space $\D$ have been extensively studied 
(see, for example, \cite{Arv, ACP, BP,koenig,sis1, siskakis}).

These are associated with the notion of semiflow $(\phi_t)$ of analytic functions mapping
the unit disc
$\DD$ to itself, and satisfying $\phi_{s+t}=\phi_s \circ \phi_t$; here $s$ and $t$
lie either in $\RR_+$ or in a sector of the complex plane. 
It is assumed that the mapping $(t,z) \mapsto \phi_t(z)$ is jointly continuous.
It follows that there exists an analytic  function $G$ on $\DD$ such that 
\[
\frac{\partial \phi_t}{\partial t}=G \circ \phi_t.
\]

A semiflow
induces
composition operators $C_{\phi_t}$ on $H^2(\DD)$ or $\D$, where
$C_{\phi_t}f = f \circ \phi_t$. If it is strongly continuous, then it has a
densely-defined generator $A$ given by $Af=Gf'$, with $G$ as above.
Fuller details are given later.

In Section \ref{sec:2} we give a characterization of analytic semigroups in terms of
the properties of $G$, using the complex Lumer--Phillips theorem \cite{ABHN} (this is
appropriate, since the semigroup is quasicontractive, as explained below). In addition,
 we give a complete description of groups of composition operators in terms of the function $G$.

The theme of Section \ref{sec:3} is compactness, together with Hilbert--Schmidt and trace-class
properties. For example, we give sufficient conditions on $G$ for the semigroup to
be immediately compact; these are necessary and sufficient (and equivalent to
eventual compactness) when the semigroup is analytic.
We give  examples to illustrate the various possibilities involving the properties of
immediate compactness and eventual compactness. 
Although most of our results are obtained in terms of the properties of $G$, we are also
able to derive results on compactness from the semiflow model $\phi_t(z)=h^{-1}(e^{-ct}h(z))$.
In particular we are able to 
provide some answers to a question raised by Siskakis
 \cite[Sec. 8]{siskakis} about how the behaviour of such semigroups
depends on the properties of $h$.

Section \ref{sec:4} is concerned with analytic semigroups and groups 
of composition operators on the half-plane. Such operators are
never compact.

\section{Analytic semigroups and groups of composition operators}
\label{sec:2}

\begin{defn}
Let $(\beta_n)_{n \ge 0}$ be a sequence of positive real numbers.
Then $H^2(\beta)$ is the space of analytic functions 
\[
f(z)=\sum_{n=0}^\infty c_n z^n
\]
 in the unit disc $\DD$ that have finite norm
\[
\|f\|_\beta= \left( \sum_{n=0}^\infty |c_n|^2 \beta_n^2 \right)^{1/2}.
\]
The case $\beta_n=1$ gives the usual Hardy space $H^2 (\DD)$. \\
The case $\beta_0=1$ and $\beta_n=\sqrt{n}$ for $n\geq 1$ provides the Dirichlet space
$\cal D$, which is included in $H^2(\DD)$. \\
The case $\beta_n=1/\sqrt{n+1}$ produces the Bergman space,
which contains $H^2(\DD)$.
\end{defn}

\subsection{General properties of semigroups}

A $C_0$-semigroup $(T(t))_{t \ge 0}$
on a Banach space $X$ is a mapping $T:\RR_+ \to \mathcal L(X)$ satisfying
\[\begin{cases}
 & T(0)=I,\\
\forall t,s \geqslant 0, &T(t+s)=T(t)\circ T(s),\\
\forall x\in X, & \lim_{t\to 0} T(t)x=x.
\end{cases}\]
A consequence of this definition is the existence of two scalars $w\geqslant 0$ and $M\geqslant 1$ such that for all $t\in \R_+$, $\|T(t)\|\leqslant Me^{wt}$. In particular, if $M=1$, the semigroup $T$ is said to be quasicontractive. If in addition $w=0$, $T$ is a 
contractive semigroup.

A $C_0$-semigroup $T$ will be called analytic (or holomorphic) if there exists a sector $\Sigma_\theta=\{ re^{i\alpha}, r\in\R_+, |\alpha|<\theta\}$ with $\theta \in (0,\frac\pi 2]$ and an analytic mapping $\widetilde T:\Sigma_\theta \to \mathcal L(X)$ such that $\widetilde T$ is an extension of $T$ and
\[\sup_{\xi\in \Sigma_\theta \cap \DD} \| \widetilde T (\xi)\| <\infty.\]
In both cases, the generator of $T$ (or $\widetilde T$) will be the linear operator $A$ defined by
\[
D(A)=\left\{x\in X, \lim_{\R\ni t\to 0} \frac{T(t)x-x}{t} \text{ exists}\right\}\]
and, for all $x \in D(A)$,
\[  Ax=\lim_{\R\ni t\to 0} \frac{T(t)x-x}{t}.\]

Recall that an operator $A$ is dissipative if $\re \langle Ax,x\rangle \le 0$ for $x \in D(A)$.
The classical Lumer--Phillips theorem
asserts that $A$
 generates a contraction semigroup if and only if
$A$ is dissipative and $ I - A$ is surjective (see, for example, \cite[Thm. 3.4.5]{ABHN}).

The following extension of this to analytic semigroups is given in \cite{arendt}.

\begin{prop}\label{prop:arendtelst}
Let
$A$ be an operator on a complex Hilbert space $H$ and let $\theta \in (0, \pi/2 )$. The
following are equivalent.

(i) $A$ generates an analytic $C_0$-semigroup which is contractive
on the sector $\Sigma_\theta$;

(ii) $e^{\pm i\theta} A$ is dissipative and $I - A$ is surjective.
\end{prop}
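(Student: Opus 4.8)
The plan is to derive the proposition from the classical Lumer--Phillips theorem quoted above, applied not only to $A$ but to every rotated operator $e^{i\alpha}A$ with $|\alpha|\le\theta$, and then to reassemble the resulting one-parameter semigroups into an analytic semigroup on $\Sigma_\theta$ by means of the resolvent description of analytic generators. Throughout, $R(\lambda,A)=(\lambda I-A)^{-1}$.

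The implication $(i)\Rightarrow(ii)$ is the soft direction. If $\widetilde T$ is analytic and contractive on $\Sigma_\theta$, then for each $\alpha$ with $|\alpha|<\theta$ the map $t\mapsto\widetilde T(e^{i\alpha}t)$ is a $C_0$-semigroup of contractions on $H$ whose generator, obtained by differentiating at $t=0$ and using the analyticity of $\widetilde T$, is $e^{i\alpha}A$; hence $e^{i\alpha}A$ is dissipative. Letting $\alpha\to\pm\theta$ and using continuity of the inner product gives that $e^{\pm i\theta}A$ is dissipative. Taking $\alpha=0$, $A$ generates a contraction semigroup, so the right half-plane $\{\lambda\in\CC:\re\lambda>0\}$ lies in $\rho(A)$; in particular $I-A$ is surjective.

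For $(ii)\Rightarrow(i)$, the first step is an elementary convexity remark about numerical ranges: if $w\in\CC$ satisfies $\re(e^{i\theta}w)\le0$ and $\re(e^{-i\theta}w)\le0$, then $\re(e^{i\alpha}w)\le0$ for all $|\alpha|\le\theta$. Indeed, for $w\neq0$ (the case $w=0$ being trivial) the set $\{\alpha:\re(e^{i\alpha}w)\le0\}$ is, on $\RR/2\pi\ZZ$, a closed arc of length $\pi$; since its complement has length $\pi>2\theta$, that complement cannot meet the arc $(-\theta,\theta)$ without also containing one of the endpoints $\pm\theta$, so the original set contains $[-\theta,\theta]$. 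Applying this with $w=\langle Ax,x\rangle$ and using $\langle e^{i\alpha}Ax,x\rangle=e^{i\alpha}\langle Ax,x\rangle$, hypothesis (ii) yields that $e^{i\alpha}A$ is dissipative for every $|\alpha|\le\theta$. With $\alpha=0$ and $I-A$ surjective, Lumer--Phillips gives that $A$ generates a contraction semigroup, hence $\{\re\lambda>0\}\subseteq\rho(A)$ with $\|R(\lambda,A)\|\le1/\re\lambda$. For $|\alpha|\le\theta<\pi/2$ we have $\re(e^{-i\alpha})=\cos\alpha>0$, so $e^{-i\alpha}\in\rho(A)$, i.e. $I-e^{i\alpha}A=e^{i\alpha}(e^{-i\alpha}I-A)$ is surjective; combined with the dissipativity of $e^{i\alpha}A$, Lumer--Phillips again shows that $e^{i\alpha}A$ generates a contraction semigroup. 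Rewriting the resulting resolvent information for $e^{i\alpha}A$ in terms of $A$ gives $\{\lambda:\re(e^{i\alpha}\lambda)>0\}\subseteq\rho(A)$ with $\|R(\lambda,A)\|\le1/\re(e^{i\alpha}\lambda)$ on that rotated half-plane. Taking the union over $|\alpha|\le\theta$ yields $\Sigma_{\theta+\pi/2}\subseteq\rho(A)$, and for each $\lambda\in\Sigma_{\theta+\pi/2-\varepsilon}$ choosing the admissible $\alpha$ that makes $\re(e^{i\alpha}\lambda)$ as large as possible produces a bound $\|R(\lambda,A)\|\le M_\varepsilon/|\lambda|$ on $\Sigma_{\theta+\pi/2-\varepsilon}$ for every small $\varepsilon>0$. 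By the standard resolvent-sector characterization of analytic generators (see \cite{ABHN}) this says exactly that $A$ generates a bounded analytic $C_0$-semigroup $\widetilde T$ on $\Sigma_\theta$. Finally, for each $|\alpha|<\theta$ the $C_0$-semigroup $t\mapsto\widetilde T(e^{i\alpha}t)$ has generator $e^{i\alpha}A$, hence coincides with the contraction semigroup generated by $e^{i\alpha}A$; therefore $\|\widetilde T(\xi)\|\le1$ throughout $\Sigma_\theta$, i.e. $\widetilde T$ is contractive on the sector.

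The main obstacle is the final assembly in $(ii)\Rightarrow(i)$: passing from ``$e^{i\alpha}A$ generates a contraction semigroup for every $|\alpha|\le\theta$'' to ``$A$ generates an analytic semigroup on $\Sigma_\theta$'' is not formal, and the cleanest route is to manufacture the resolvent estimate on the enlarged sector $\Sigma_{\theta+\pi/2}$ and invoke the resolvent characterization of analytic generators, while keeping the constants sharp enough to recover the contractivity bound $1$ rather than mere boundedness. The two facts that make the reduction possible are the dissipativity-propagation lemma and the observation that a contraction-semigroup generator has the whole right half-plane in its resolvent set, which is what lets Lumer--Phillips be applied to the rotated operators $e^{i\alpha}A$.
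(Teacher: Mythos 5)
Your argument is correct, but it cannot be compared step-by-step with the paper's, because the paper gives no proof of this proposition at all: it is quoted directly from Arendt and ter Elst \cite{arendt}. What you have produced is a self-contained derivation from the classical Lumer--Phillips theorem, and its three key ingredients are all sound. The arc-length argument showing that dissipativity of $e^{\pm i\theta}A$ forces dissipativity of $e^{i\alpha}A$ for every $|\alpha|\le\theta$ is the right elementary observation (the set $\{\alpha:\re(e^{i\alpha}w)\le 0\}$ is a closed arc of length $\pi>2\theta$, so it contains $[-\theta,\theta]$ once it contains both endpoints). The transfer of the range condition to the rotated operators is handled correctly: once $A$ generates a contraction semigroup, the whole right half-plane lies in $\rho(A)$, so $e^{-i\alpha}\in\rho(A)$ and Lumer--Phillips applies to $e^{i\alpha}A$; rewriting the resulting bounds $\|R(\lambda,A)\|\le 1/\re(e^{i\alpha}\lambda)$ and optimising over $\alpha$ gives the sectorial estimate $\|R(\lambda,A)\|\le M_\varepsilon/|\lambda|$ on $\Sigma_{\pi/2+\theta-\varepsilon}$, which by the standard resolvent characterization yields analyticity on $\Sigma_\theta$. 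Finally, the passage from boundedness to contractivity via uniqueness of the semigroup generated by $e^{i\alpha}A$ is exactly what is needed, and the same identification of the generator of $t\mapsto\widetilde T(e^{i\alpha}t)$ with $e^{i\alpha}A$ (a standard fact for analytic semigroups, obtained by differentiating along the ray) also carries the (i)$\Rightarrow$(ii) direction. The only cosmetic remark is that this last identification is invoked twice without proof; it deserves a one-line justification or a reference, but it is standard and does not constitute a gap.
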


From this we have the following corollary, which appears to be new.

\begin{cor}
Suppose that $A$ is an operator on a Hilbert space and $\theta \in (0,\pi/2)$. If 
$A$ and $\pm e^{i\theta}A$ generate quasicontractive semigroups, then $A$ generates
an analytic semigroup on the sector $\Sigma_\theta$.
\end{cor}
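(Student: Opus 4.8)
The plan is to reduce to Proposition~\ref{prop:arendtelst} by subtracting a suitable \emph{real} multiple of the identity, which converts the quasicontractivity hypotheses into the dissipativity condition appearing there. Since $A$, $e^{i\theta}A$ and $e^{-i\theta}A$ each generate quasicontractive $C_0$-semigroups, there exist $w_0,w_1,w_2\ge 0$ with $\|e^{tA}\|\le e^{w_0t}$, $\|e^{te^{i\theta}A}\|\le e^{w_1t}$ and $\|e^{te^{-i\theta}A}\|\le e^{w_2t}$ for all $t\ge 0$. Then $e^{i\theta}A-w_1I$ and $e^{-i\theta}A-w_2I$ generate contraction semigroups, hence are dissipative by the Lumer--Phillips theorem; equivalently, for every $x\in D(A)$,
\[
\re\big(e^{i\theta}\langle Ax,x\rangle\big)\le w_1\|x\|^2
\qquad\text{and}\qquad
\re\big(e^{-i\theta}\langle Ax,x\rangle\big)\le w_2\|x\|^2 .
\]

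Next I would fix a real number $c\ge 0$ with $c\cos\theta\ge\max(w_1,w_2)$ and $1+c>w_0$, for instance $c=\max\{w_0,\ w_1/\cos\theta,\ w_2/\cos\theta\}$ (this uses $\cos\theta>0$, which holds because $\theta<\pi/2$), and verify condition (ii) of Proposition~\ref{prop:arendtelst} for the operator $A-cI$, which has the same dense domain as $A$. Since $c$ is real, for $x\in D(A)$ one has
\[
\re\big\langle e^{\pm i\theta}(A-cI)x,x\big\rangle=\re\big(e^{\pm i\theta}\langle Ax,x\rangle\big)-c\cos\theta\,\|x\|^2\le 0
\]
by the two inequalities above and the choice of $c$, so $e^{\pm i\theta}(A-cI)$ is dissipative; and $I-(A-cI)=(1+c)I-A$ is bijective because $1+c>w_0$ belongs to the resolvent set of the generator $A$ of an $e^{w_0t}$-bounded semigroup. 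Proposition~\ref{prop:arendtelst} then provides an analytic $C_0$-semigroup $\widetilde T$ on $\Sigma_\theta$, contractive on $\Sigma_\theta$, whose generator is $A-cI$.

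It remains to undo the shift. The operator $A=(A-cI)+cI$ generates the family $\widetilde S(\xi):=e^{c\xi}\widetilde T(\xi)$, $\xi\in\Sigma_\theta$, which is analytic (a product of the analytic maps $\xi\mapsto e^{c\xi}$ and $\widetilde T$), coincides with $(e^{tA})_{t\ge 0}$ on $\R_+$, and satisfies $\|\widetilde S(\xi)\|\le e^{c\,\re\xi}\le e^{c}$ for $\xi\in\Sigma_\theta\cap\DD$ because $c\ge 0$; hence $A$ generates an analytic $C_0$-semigroup on $\Sigma_\theta$ in the sense defined above. The one point that goes beyond routine bookkeeping — and the step I would check most carefully — is that one \emph{and the same} real shift $c$ has to make both rotated operators $e^{i\theta}A$ and $e^{-i\theta}A$ dissipative; this succeeds precisely because subtracting $cI$ decreases $\re(e^{\pm i\theta}\langle Ax,x\rangle)$ by the same amount $c\cos\theta$ for both signs. (If the hypothesis is instead read with the literal sign pattern $\pm e^{i\theta}A$, the same scheme works after additionally invoking the bound $\re\langle Ax,x\rangle\le w_0\|x\|^2$ coming from $A$ itself, at the cost of a larger value of $c$.)
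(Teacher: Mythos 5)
Your proof is correct and follows essentially the same route as the paper: both arguments subtract a single real multiple $cI$ of the identity, chosen (using $\cos\theta>0$) so that $A-cI$ and $e^{\pm i\theta}(A-cI)$ are simultaneously dissipative, verify surjectivity of $I-(A-cI)$, and then invoke Proposition~\ref{prop:arendtelst}. Your write-up merely makes explicit two steps the paper leaves implicit — the surjectivity via the resolvent bound for a quasicontractive generator, and the rescaling $\xi\mapsto e^{c\xi}\widetilde T(\xi)$ that recovers the analytic semigroup generated by $A$ itself.
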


\beginpf
There exist $\delta_1,\delta_2,\delta_3 \ge 0$ such that $A-\delta_1 I$, $e^{i\theta}A-\delta_2 I$ and
$e^{-i\theta} A-\delta_3 I$ are all dissipative.

It follows that $A-\alpha I$, $e^{i\theta}(A-\alpha I)$ and $e^{-i\theta}(A-\alpha I)$ are all dissipative
provided that $\alpha \ge \max\{ \delta_1, \delta_2/\cos\theta, \delta_2/\cos\theta \}$. Moreover,
$I - (A-\alpha I)$ is surjective, and so the result follows from Proposition~\ref{prop:arendtelst}.

\endpf

\subsection{An algebraic characterization of composition operators}

The following characterization will be useful
in order to show that an analytic semigroup
consists of composition operators whenever its restriction to $\RR_+$ has this property.

In \cite[Thm. 5.1.13]{rosenthal} it is shown that a bounded linear operator $T$ on $H^2(\DD)$ is a composition
operator if and only if, for the functions $e_n: z \mapsto z^n$, we have $Te_n=(Te_1)^n$ for all $n \in \NN$.
A similar characterization holds in the weighted Hardy space $H^2(\beta)$, with one
supplementary condition.

\begin{prop}
Let $T : H^2(\beta) \to H^2(\beta)$ be a bounded linear operator. The operator $T$ is a composition operator if and only if
both $Te_1(\DD)\subset \DD$ and for all $n\in\N$, $Te_n=(Te_1)^n$.
\end{prop}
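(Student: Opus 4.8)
The plan is to reduce the weighted case to the known unweighted characterization cited from \cite{rosenthal}, by showing that the two displayed conditions force $T$ to act on monomials exactly as a composition operator would, and then that this symbol is admissible. First I would observe that one direction is trivial: if $T=C_\phi$ for some analytic $\phi:\DD\to\DD$ with $C_\phi$ bounded on $H^2(\beta)$, then $Te_1=\phi$ maps $\DD$ into $\DD$ and $Te_n=\phi^n=(Te_1)^n$, so both conditions hold. The substance is the converse.

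For the converse, set $\phi:=Te_1$. The hypothesis $\phi(\DD)\subset\DD$ says that $\phi$ is a genuine self-map of the disc, and since $\phi=Te_1\in H^2(\beta)$ it is in particular analytic on $\DD$. The condition $Te_n=(Te_1)^n=\phi^n$ determines $T$ on all monomials, hence on all polynomials by linearity, and therefore $T$ agrees with the formal composition operator $C_\phi$ on the dense subspace of polynomials. Because $T$ is bounded by assumption, $T=C_\phi$ as soon as we know $C_\phi$ is bounded on $H^2(\beta)$ — but in fact we get this for free: for any polynomial $p$ we have $\|C_\phi p\|_\beta=\|Tp\|_\beta\le\|T\|\,\|p\|_\beta$, so $C_\phi$ extends to a bounded operator on the closure, which coincides with $T$. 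Thus $T$ is the bounded composition operator $C_\phi$ with $\phi=Te_1$ a self-map of $\DD$.

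The main obstacle — and the reason for the "supplementary condition" $Te_1(\DD)\subset\DD$ — is that in the unweighted Hardy space the condition $Te_n=(Te_1)^n$ alone already forces $Te_1$ to be a self-map of $\DD$ (this is part of what \cite[Thm. 5.1.13]{rosenthal} delivers, using the specific function theory of $H^2(\DD)$, e.g. that its multiplier algebra is $H^\infty$ and norm control on powers), whereas in a general weighted space $H^2(\beta)$ one cannot deduce $\|\phi^n\|_\beta$-control implies $\|\phi\|_\infty\le 1$ without extra input. So the role of the extra hypothesis is precisely to supply what the weight might fail to give, and the proof should make explicit that, once $\phi(\DD)\subset\DD$ is assumed, no further appeal to the fine structure of the space is needed: the argument is the soft density-plus-boundedness argument above. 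I would also remark that one must check the polynomials are dense in $H^2(\beta)$, which is immediate from the definition of the norm as a weighted $\ell^2$-norm on Taylor coefficients, so the identity $T=C_\phi$ on polynomials does propagate to the whole space.
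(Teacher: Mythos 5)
Your proposal is correct and follows essentially the same route as the paper: the forward direction is immediate, and for the converse one sets $\varphi=Te_1$, notes that $T$ and $C_\varphi$ agree on the monomials $e_n$, and concludes $T=C_\varphi$ by linearity, density of polynomials, and continuity. Your additional remarks on why the hypothesis $Te_1(\DD)\subset\DD$ is needed in the weighted setting are accurate commentary but do not change the argument.
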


\beginpf
If $T=C_\varphi$ is a composition operator, then $Te_1=\varphi :\DD \to\DD$ and $Te_n=\varphi^n=(Te_1)^n$ for all $n\in\N$.

Conversely, we note that $\varphi = Te_1\in H^2(\beta)$. The function $\varphi$ is analytic and maps $\DD$ to $\DD$. Besides, for every $n\in\N$, $Te_n=\varphi^n=C_\varphi e_n$. Thus, the linearity and the continuity of $T$ and $C_\varphi$ imply that $T=C_\varphi$.
\end{proof}

We require this for the following result, which applies in particular to the Hardy and Dirichlet spaces.

\begin{cor}\label{cor:dir}
Let $T=(T(t))_{t \ge 0}$ be a $C_0$-semigroup of composition operators on $H^2(\beta)$,
where $\beta_n=O(\sqrt n)$. If $T$ has an analytic extension to a sector  $\Sigma_\theta$, then for every $\xi\in\Sigma_\theta$, $T(\xi)$ is composition operator.
\end{cor}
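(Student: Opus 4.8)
The plan is to use the algebraic characterization of composition operators just established, together with an analytic-continuation (identity-theorem) argument applied coordinatewise. Fix the semigroup $T=(T(t))_{t\ge 0}$ of composition operators on $H^2(\beta)$ with $\beta_n=O(\sqrt n)$, and suppose it has an analytic extension $\widetilde T:\Sigma_\theta\to\mathcal L(H^2(\beta))$. By the Proposition, for each real $t\ge 0$ we have $T(t)e_n=(T(t)e_1)^n$ for all $n\in\N$, and $T(t)e_1$ maps $\DD$ into $\DD$. I want to promote both conditions to all $\xi\in\Sigma_\theta$.

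First I would fix $n\in\N$ and consider the two maps $\xi\mapsto \widetilde T(\xi)e_n$ and $\xi\mapsto(\widetilde T(\xi)e_1)^n$ from $\Sigma_\theta$ into $H^2(\beta)$. The first is analytic because $\widetilde T$ is analytic and evaluation at the fixed vector $e_n$ is a bounded linear map. The second requires a little more care: $\xi\mapsto\widetilde T(\xi)e_1$ is an analytic $H^2(\beta)$-valued map, and I need that $g\mapsto g^n$ is analytic (indeed continuous) as a map $H^2(\beta)\to H^2(\beta)$, at least locally near the range of this curve. This is exactly where the hypothesis $\beta_n=O(\sqrt n)$ enters: it guarantees $H^2(\beta)$ contains the Dirichlet space continuously, hence is contained in (a space where) multiplication behaves well — concretely, $H^2(\beta)$ with $\beta_n=O(\sqrt n)$ is a Banach algebra under pointwise multiplication, or at least pointwise squaring is a bounded (hence analytic) polynomial map on it — so $g\mapsto g^n$ is analytic. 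Granting this, both curves $\xi\mapsto\widetilde T(\xi)e_n$ and $\xi\mapsto(\widetilde T(\xi)e_1)^n$ are analytic $H^2(\beta)$-valued functions on $\Sigma_\theta$ that agree on $\RR_+\cap\Sigma_\theta$, which has an accumulation point in $\Sigma_\theta$; by the vector-valued identity theorem they coincide on all of $\Sigma_\theta$. Thus $\widetilde T(\xi)e_n=(\widetilde T(\xi)e_1)^n$ for every $\xi\in\Sigma_\theta$ and every $n$.

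It remains to check the mapping condition $\widetilde T(\xi)e_1(\DD)\subset\DD$ for $\xi\in\Sigma_\theta$. Write $\varphi_\xi=\widetilde T(\xi)e_1$, an analytic function on $\DD$. For fixed $z\in\DD$, the map $\xi\mapsto\varphi_\xi(z)$ is analytic on $\Sigma_\theta$ (point evaluation on $\DD$ is bounded on $H^2(\beta)$), takes values in $\overline\DD$ for $\xi\in\RR_+\cap\Sigma_\theta$ since each $\varphi_t$ maps $\DD$ to $\DD$, and is bounded on $\Sigma_\theta\cap\DD$ since $\sup_{\xi\in\Sigma_\theta\cap\DD}\|\widetilde T(\xi)\|<\infty$ controls $|\varphi_\xi(z)|$ there. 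A Phragmén–Lindelöf / maximum-modulus argument on the sector (or simply noting that $\varphi_\xi(z)$ stays bounded and hence, by continuity up to the boundary rays where it lies in $\overline\DD$, one gets $|\varphi_\xi(z)|\le 1$; and then $|\varphi_\xi(z)|<1$ strictly because $\varphi_\xi$ is a non-constant analytic function — or because $\widetilde T(\xi)$ being a bijection-limit forces the open-mapping property) yields $\varphi_\xi(\DD)\subset\DD$. With both hypotheses of the Proposition verified for each $\xi\in\Sigma_\theta$, we conclude $\widetilde T(\xi)=C_{\varphi_\xi}$ is a composition operator.

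The main obstacle is the second step: establishing that $g\mapsto g^n$ is a well-defined analytic self-map of $H^2(\beta)$, which is precisely the role of the assumption $\beta_n=O(\sqrt n)$, and making sure one has the vector-valued identity theorem available (this needs only that $H^2(\beta)$ is a complex Banach space, which it is). A secondary subtlety is the boundary behaviour argument for $\varphi_\xi(\DD)\subset\DD$: one must ensure the inclusion is into the \emph{open} disc, not merely $\overline\DD$, which follows since a composition operator's symbol that touches $\partial\DD$ would contradict boundedness/the semigroup structure, or alternatively since each $\widetilde T(\xi)$ is invertible on a dense set only if... — in any case the cleanest route is to invoke that $\widetilde T(\xi)$ is a nonzero multiplicative-type operator with $\widetilde T(\xi)e_1$ non-constant (as $\widetilde T(\xi)$ is injective, being part of a $C_0$-semigroup extension), so its symbol cannot be a constant of modulus $1$ nor reach $\partial\DD$ by the open mapping theorem.
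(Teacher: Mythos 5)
Your overall strategy (reduce to the algebraic characterization, then use the identity theorem to propagate the relations $T(\xi)e_n=(T(\xi)e_1)^n$ from $\RR_+$ to $\Sigma_\theta$) is the same as the paper's, but two of your key steps have genuine gaps. First, you misidentify the role of the hypothesis $\beta_n=O(\sqrt n)$: it does \emph{not} make $H^2(\beta)$ a Banach algebra under pointwise multiplication, nor even make $g\mapsto g^2$ a well-defined self-map. The basic case $\beta_n\equiv 1$ is the Hardy space, and already there squaring fails: $g(z)=(1-z)^{-1/4}$ lies in $H^2(\DD)$ while $g^2=(1-z)^{-1/2}$ does not. So your claim that $\xi\mapsto(\widetilde T(\xi)e_1)^n$ is an analytic $H^2(\beta)$-valued map is unjustified --- indeed, before the identity is proved you do not even know this expression lies in $H^2(\beta)$. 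The repair is to drop the vector-valued formulation and apply the scalar identity theorem to $\xi\mapsto \widetilde T(\xi)e_n(z)-\bigl(\widetilde T(\xi)e_1(z)\bigr)^n$ for each fixed $z\in\DD$, using only that point evaluations are bounded on $H^2(\beta)$; no algebra structure is needed.

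Second, your verification of $\widetilde T(\xi)e_1(\DD)\subset\DD$ does not work as stated. A Phragm\'en--Lindel\"of or maximum-modulus argument on $\Sigma_\theta$ requires control of $|\varphi_\xi(z)|$ on the \emph{boundary rays} $e^{\pm i\theta}\RR_+$, whereas the inequality $|\varphi_t(z)|\le 1$ is known only on the bisector $\RR_+$, and the uniform bound on $\|\widetilde T(\xi)\|$ holds only on $\Sigma_\theta\cap\DD$; one cannot bound an analytic function on a sector by its values on the bisector. Your fallback for excluding the degenerate case (that $\widetilde T(\xi)$ is injective ``being part of a $C_0$-semigroup extension'') is also unsubstantiated. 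The paper's argument is different and is where the hypothesis $\beta_n=O(\sqrt n)$ actually enters, twice: if $|T(\xi)e_1(\alpha)|>1$ for some $\alpha\in\DD$, then $|T(\xi)e_n(\alpha)|=|T(\xi)e_1(\alpha)|^n$ grows geometrically while $\|e_n\|_\beta=\beta_n$ grows only like $\sqrt n$, contradicting boundedness of $T(\xi)$ and of the point evaluation at $\alpha$; and if $T(\xi)e_1$ is a unimodular constant $\lambda$, then $T(\xi)$ maps $e_n$ to $\lambda^n$, so the associated functional would be represented by $\sum_k \lambda^k z^k/\beta_k^2$, whose squared norm $\sum_k 1/\beta_k^2$ diverges precisely because $\beta_n=O(\sqrt n)$. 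You need some version of these two arguments to close the proof.
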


\begin{proof}
Let $n\in\N$. We define $f_n : \Sigma_\theta \to  H^2(\DD),\  \xi \mapsto T(\xi)e_n-(T(\xi)e_1)^n.$ As $T(t)$ is a composition operator for each $t\in\R_+$, the function $f_n$ is zero on $\R_+$. Thus by analyticity of $f_n$ on $\Sigma_\theta$, $f_n \equiv 0$.

It remains to check that for all $\xi\in\Sigma_\theta$, $T(\xi)e_1(\DD) \subset \DD$. Supposing that this is not true, then
there exists $\alpha \in\DD$ such that $|T(\xi)e_1(\alpha)|\ge 1$. Since $T(\xi)e_1$ is analytic, then  
we can suppose that
either we have $|T(\xi)e_1(\alpha)|>1$ or that $T(\xi)e_1$ is a constant $\lambda$	of modulus 1. In the first case
 $|T(\xi)e_n(\alpha)|/\|e_n\|=|T(\xi)e_1(\alpha)|^n/\|e_n\|  \to \infty$, as $n \to \infty$ and this contradicts the boundedness  
of $T(\xi)$. In the second case, $T(\xi)$ maps $e_n$ to $\lambda^n$ (including $n=0$), and thus, if it were bounded on $H^2(\beta)$,
it would be given as the inner product with the function $\sum_{k=0}^\infty \lambda^k z^k/ \beta_k^2$. However,
this function does not lie in $H^2(\beta)$ as the square of its norm would be $\sum_{k=0}^\infty 1/\beta_k^2$, which diverges.
\end{proof}

A similar characterization holds for weighted composition operators.

\begin{thm}\label{thm:wcompo}
	Let $T:H^2(\DD)\to H^2(\DD)$ be a linear and bounded operator. 
	Assume that  $Te_0\not\equiv 0$ and $Te_0\in H^\infty(\DD)$. Then,
	$T$ is a weighted composition operator if and only if  $(Te_0)^{n-1}Te_n=(Te_1)^n$ for all positive integers $n$.
\end{thm}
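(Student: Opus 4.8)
The plan is to mimic the unweighted case (Rosenthal's criterion) while tracking the extra factor $Te_0$.

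\textbf{Setup.} Write $g = Te_0 \in H^\infty(\DD)$ with $g \not\equiv 0$, and $\psi = (Te_1)/(Te_0)$, which is a priori only a meromorphic function on $\DD$; the goal is to show $\psi$ is in fact analytic, maps $\DD$ to $\DD$, and that $T = M_g C_\psi$, where $M_g$ denotes multiplication by $g$. One direction is immediate: if $T = M_g C_\psi$ with $g = Tf_0$ (where $f_0 \equiv 1$) and $\psi$ an analytic self-map of $\DD$, then $Te_n = g\psi^n$, so $(Te_0)^{n-1} Te_n = g^{n-1}\cdot g\psi^n = (g\psi)^n = (Te_1)^n$, as required. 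So the content is the converse.

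\textbf{Converse, main steps.} First I would establish the candidate symbol is analytic. From $(Te_0)^{n-1} Te_n = (Te_1)^n$ with $n=1$ there is no information, but with $n=2$ we get $g\cdot Te_2 = (Te_1)^2$, and in general $g^{n-1} Te_n = (Te_1)^n$. Let $Z$ be the zero set of $g$ in $\DD$ (discrete, since $g \not\equiv 0$). On $\DD \setminus Z$ define $\psi = (Te_1)/g$; then $g^{n-1} Te_n = (Te_1)^n = g^n \psi^n$, so $Te_n = g\psi^n$ on $\DD \setminus Z$. The key point is that each $Te_n$ is \emph{globally analytic} on $\DD$ (it lies in $H^2(\DD)$), and $g\psi^n = (Te_1)^n/g^{n-1}$; for this quotient to extend analytically across each zero of $g$ for \emph{every} $n$, one shows that $\psi$ itself extends analytically across $Z$. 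Concretely, near a zero $z_0$ of $g$ of order $m$: $(Te_1)^2 = g\cdot Te_2$ vanishes to order at least $m$ at $z_0$, so $Te_1$ vanishes to order at least $\lceil m/2\rceil$; iterating with $(Te_1)^n = g^{n-1}Te_n$ forces $Te_1$ to vanish to order exactly $m$ at $z_0$ (any deficit is amplified $n$-fold on the left but only $(n-1)$-fold on the right, a contradiction for large $n$ unless the orders match — this is the same amplification trick used in the proof of Corollary~\ref{cor:dir}). Hence $\psi = (Te_1)/g$ has removable singularities at all points of $Z$ and is analytic on $\DD$, with $Te_n = g\psi^n$ identically on $\DD$ for all $n \ge 1$; the case $n=1$ reads $Te_1 = g\psi$ and $n=0$ reads $Te_0 = g$, so in fact $Te_n = g\psi^n$ for all $n \ge 0$.

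\textbf{Self-map property and conclusion.} Next I would show $\psi(\DD) \subset \DD$. Since $g \not\equiv 0$, pick $z_1 \in \DD$ with $g(z_1) \ne 0$. If $|\psi(z_1)| > 1$ then $|Te_n(z_1)|/\|e_n\| = |g(z_1)|\,|\psi(z_1)|^n/\|e_n\| \to \infty$ (here $\|e_n\| = 1$ in $H^2(\DD)$), contradicting boundedness of $T$ via evaluation at $z_1$ being continuous on $H^2(\DD)$. So $|\psi| \le 1$ on the open set where $g \ne 0$, hence $|\psi| \le 1$ on $\DD$ by continuity, and then $|\psi| < 1$ on $\DD$ by the open mapping theorem/maximum principle unless $\psi$ is a unimodular constant $\lambda$; in that exceptional case $Te_n = \lambda^n g$ for all $n$, and one rules this out exactly as in Corollary~\ref{cor:dir}: $T$ would have to act as $f \mapsto (\text{coefficient functional against } \sum \overline{\lambda}^{\,k}z^k)\cdot$? — more directly, $T e_n = \lambda^n g$ means $T = g \otimes \left(\sum_k \overline{\lambda}^k e_k\right)$ as a rank-one-type map, whose "symbol" $\sum_k \lambda^k z^k$ still defines the constant self-map $z \mapsto \lambda$, which one checks maps $\DD$ into $\DD$ only in the non-unimodular case; alternatively simply allow constant symbols of modulus $1$ to be excluded by the hypothesis phrasing, or note $M_g C_\lambda$ with $|\lambda|=1$ is bounded and does satisfy the identities, so this case is genuinely a weighted composition operator with symbol the constant $\lambda \in \overline{\DD}$ — here one should be slightly careful about the convention for "self-map" and I would state that $\psi$ maps $\DD$ into $\overline{\DD}$ and is non-constant or constant in $\overline\DD$. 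Finally, with $\psi$ analytic and $\psi(\DD) \subset \DD$, $M_g C_\psi$ is a bounded operator on $H^2(\DD)$ agreeing with $T$ on the total set $\{e_n : n \ge 0\}$, hence $T = M_g C_\psi$ by linearity and continuity.

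\textbf{Main obstacle.} The delicate step is the analyticity of $\psi$ across the zeros of $g$: one must rule out that $Te_1$ vanishes to \emph{higher} order than $g$ at some point (which would make $\psi$ vanish there — harmless) and, more importantly, rule out \emph{lower} order (which would make $g\psi^n$ fail to be analytic for large $n$ despite $Te_n \in H^2$). The amplification argument comparing the $n$-fold vanishing order of $(Te_1)^n$ against the $(n-1)$-fold order of $g^{n-1}$ plus the order of $Te_n$, letting $n \to \infty$, is what pins the orders down; this is the crux and deserves the most care, including the observation that $\mathrm{ord}_{z_0}(Te_n) \ge 0$ with no a priori upper bound, which is exactly why one needs the limit $n\to\infty$ rather than a single value of $n$.
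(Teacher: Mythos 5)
Your proof follows essentially the same route as the paper's: define $\psi=Te_1/Te_0$ off the discrete zero set $Z$ of $g=Te_0$, use point evaluations (reproducing kernels) to force $|\psi|\le 1$, dispose of the unimodular-constant case, and conclude $T=M_gC_\psi$ by linearity and continuity on the total set $\{e_n\}$. One structural remark: your order-of-vanishing argument for analyticity of $\psi$ across $Z$ is correct (from $n\,\mathrm{ord}_{z_0}(Te_1)=(n-1)\,\mathrm{ord}_{z_0}(g)+\mathrm{ord}_{z_0}(Te_n)\ge (n-1)\,\mathrm{ord}_{z_0}(g)$ and $n\to\infty$ one gets $\mathrm{ord}_{z_0}(Te_1)\ge\mathrm{ord}_{z_0}(g)$), but it is more work than needed: your kernel argument already gives $|\psi|\le 1$ on $\DD\setminus Z$ without knowing anything at $Z$, and then Riemann's removable singularity theorem extends $\psi$ analytically across the discrete set $Z$. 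That is the (implicit) route of the paper, which works entirely on $\DD\setminus Z$ and never discusses vanishing orders.

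The one genuine error is in your treatment of the exceptional case $\psi\equiv\lambda$ with $|\lambda|=1$: this case must be \emph{excluded}, not accommodated. Your suggestion that ``$M_gC_\lambda$ with $|\lambda|=1$ is bounded and does satisfy the identities'' is false, because for $\lambda\in\TT$ the map $f\mapsto f(\lambda)$ is not even defined on $H^2(\DD)$ (evaluation at a boundary point is not a bounded functional there), so $C_\lambda$ is not an operator on $H^2(\DD)$ and no weighted composition operator with such a symbol exists; weakening the conclusion to ``$\psi$ maps $\DD$ into $\overline\DD$'' would therefore not salvage anything. Fortunately the case cannot occur for bounded $T$: if $Te_n=\lambda^n g$ for all $n\ge 0$, then $\langle T^*g,e_n\rangle=\langle g,Te_n\rangle=\overline{\lambda}^{\,n}\|g\|^2$, so $\|T^*g\|^2=\sum_{n\ge 0}\|g\|^4=\infty$, contradicting boundedness of $T$. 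This is exactly the paper's argument and is the first of the three options you list; you should commit to it and delete the hedge about constant symbols in $\overline\DD$.
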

\beginpf
If $T$ is the weighted composition operator $M_wC_\phi$ defined by 
	$Tf=w f\circ\varphi$, then it follows that $Te_0=w$ and $Te_1=w\varphi$.  Therefore, for all $n\geq 1$, $(Te_0)^{n-1}Te_n=(Te_1)^n$ is satisfied.\\
	
Conversely, assume that for every $n\geq 1$, $(Te_0)^{n-1}Te_n=(Te_1)^n$. Let $w=Te_0\in H^2(\DD)$. Since $Te_0$ is not identically zero, the set $Z$ of its zeroes is discrete. For $z\in\DD\backslash Z$, let $\varphi (z)=\frac{Te_1(z)}{Te_0(z)}$. It remains to check that $\varphi(\DD\backslash Z)\subset\DD$ for  $z\in\DD\backslash Z$.
	
Assume that there exists $z_0\in\DD\backslash Z$ such that $|\varphi(z_0)|>1$. Then,
	\[|w(z_0)||\varphi^n(z_0)|=|\langle Te_n,k_{z_0}\rangle|\leqslant \| T\| \|k_{z_0}\|,\] which contradicts $|\varphi^n(z_0)|\rightarrow \infty$.
	
Assume now the existence of $z_0\in\DD\backslash Z$ such that $|\varphi(z_0)|=1$. By the maximum principle $\varphi(z)=\lambda\in\T$ for every $z\in\DD\backslash Z$. Thus, $Te_n=\lambda^n w =\lambda^n Te_0$ for all $n\in\NN$. Hence we get  \[\|T^*Te_0\|^2=\sum_{n\in\NN}|\langle Te_0,Te_n\rangle|^2=\sum_{n\in\NN}|\lambda|^{2n}=\infty,\]
	a contradiction.
	Thus we obtain $|\varphi(z)|<1$ and $Te_n=w\varphi^n$ for all $n\in\NN$. If $Te_0 \in H^\infty (\DD)$, then the continuity of $M_w C_\varphi$ implies that $T=M_w C_\varphi$.
\endpf
 
\begin{cor}
	Let $T(t)$ be a $C_0$-semigroup of weighted composition operators on $H^2(\DD)$. If $T(t)$ has an analytic extension to a sector $\Sigma_\theta$, then for every $\xi\in\Sigma_\theta$, $T(\xi)$ is a weighted composition operator.
\end{cor}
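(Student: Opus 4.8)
The plan is to imitate the proof of Corollary~\ref{cor:dir}, using Theorem~\ref{thm:wcompo} in place of the algebraic characterization of unweighted composition operators. Write $\widetilde T\colon\Sigma_\theta\to\mathcal L(H^2(\DD))$ for the analytic extension of $T$.

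\emph{Step 1 (propagating the algebraic identity into the sector).} Fix $n\ge 1$ and $z\in\DD$. Since $\widetilde T$ is operator-analytic and the point evaluation $f\mapsto f(z)$ is a bounded linear functional on $H^2(\DD)$, the scalar function
\[
\xi\longmapsto \bigl((\widetilde T(\xi)e_0)(z)\bigr)^{n-1}\,(\widetilde T(\xi)e_n)(z)-\bigl((\widetilde T(\xi)e_1)(z)\bigr)^{n}
\]
is analytic on $\Sigma_\theta$. For real $\xi=t$ the operator $T(t)$ is a weighted composition operator, so by the forward implication of Theorem~\ref{thm:wcompo} this function vanishes on $\RR_+$; since $\RR_+$ has accumulation points in the connected open set $\Sigma_\theta$, it vanishes identically on $\Sigma_\theta$. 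Letting $z$ range over $\DD$, we obtain, for every $\xi\in\Sigma_\theta$ and every $n\ge1$, the identity of analytic functions $(T(\xi)e_0)^{n-1}\,T(\xi)e_n=(T(\xi)e_1)^n$ on $\DD$.

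\emph{Step 2 (non-triviality of the weight).} To invoke Theorem~\ref{thm:wcompo} for $T(\xi)$ we also need $T(\xi)e_0\not\equiv 0$. Suppose $T(\xi_0)e_0\equiv 0$ for some $\xi_0\in\Sigma_\theta$. Using the semigroup law $\widetilde T(\eta+\eta')=\widetilde T(\eta)\widetilde T(\eta')$ on $\Sigma_\theta$ (which holds by analytic continuation from $\RR_+$), we get $\widetilde T(\eta)e_0=\widetilde T(\eta-\xi_0)\bigl(\widetilde T(\xi_0)e_0\bigr)=0$ for every $\eta$ on the open ray through $\xi_0$ with $|\eta|>|\xi_0|$. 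Arguing coordinatewise as in Step~1, $\xi\mapsto T(\xi)e_0$ then vanishes identically on $\Sigma_\theta$; in particular $T(t)e_0\equiv 0$ for real $t>0$, i.e. $T(t)=M_0C_{\varphi_t}=0$, contradicting strong continuity at $0$. Hence $T(\xi)e_0\not\equiv 0$ for all $\xi\in\Sigma_\theta$, and Theorem~\ref{thm:wcompo} yields that each $T(\xi)$ is a weighted composition operator.

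\emph{The main obstacle} is the remaining hypothesis $T(\xi)e_0\in H^\infty(\DD)$ of Theorem~\ref{thm:wcompo}: the boundedness of $\widetilde T$ on $\Sigma_\theta\cap\DD$ only transmits an $H^2$-norm bound on $T(\xi)e_0$, not an $H^\infty$ bound, so for complex $\xi$ this point is not automatic. I would address it either by deducing, in the concrete setting of these semigroups, a bound on the weight from control of the cocycle $w_{s+t}=w_s\,(w_t\circ\varphi_s)$, or — more robustly — by bypassing the last step of Theorem~\ref{thm:wcompo}: Step~1 together with the reproducing-kernel and maximum-principle arguments in the proof of Theorem~\ref{thm:wcompo} already give $T(\xi)e_n=(T(\xi)e_0)\,\varphi^n$ for all $n$, where $\varphi:=T(\xi)e_1/T(\xi)e_0$ extends across the discrete zero set of $T(\xi)e_0$ to a map $\DD\to\DD$; approximating $f\in H^2(\DD)$ by polynomials and passing to pointwise limits (using continuity of point evaluations and of $T(\xi)$) then gives $(T(\xi)f)(z)=(T(\xi)e_0)(z)\,f(\varphi(z))$ for every $z\in\DD$, so that $T(\xi)=M_{T(\xi)e_0}C_\varphi$ directly, with no $H^\infty$ hypothesis required. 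Everything else is the analytic-continuation mechanism already used for Corollary~\ref{cor:dir}.
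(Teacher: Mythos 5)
Your proposal follows the same overall strategy as the paper's proof: propagate the algebraic identity $(T(\xi)e_0)^{n-1}T(\xi)e_n=(T(\xi)e_1)^n$ from $\RR_+$ into the sector by analyticity, rule out $T(\xi)e_0\equiv 0$, and then appeal to Theorem~\ref{thm:wcompo}. Your Steps 1 and 2 are essentially the paper's first two sentences, done more carefully (the paper treats $f_n$ as a single vector-valued analytic function and dismisses the degenerate case $Te_0\equiv 0$ in one line; your coordinatewise identity-theorem argument and your semigroup-law argument for non-vanishing are finer but equivalent). The genuine divergence is the last step, and you have correctly located the sensitive point. The paper verifies the hypothesis $T(\xi)e_0\in H^\infty(\DD)$ by noting $\|T(\xi)e_0\|\le M^n$ for $\xi\in n\DD\cap\Sigma_\theta$; but this is a bound on the $H^2$ norm, and a finite $H^2$ norm does not place a function in $H^\infty(\DD)$, so the paper's justification of that hypothesis is not convincing as written. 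Your second workaround repairs this: running the reproducing-kernel and maximum-principle portion of the proof of Theorem~\ref{thm:wcompo} to obtain $T(\xi)e_n=w\varphi^n$ with $w=T(\xi)e_0$ and $\varphi:\DD\to\DD$, and then extending to all of $H^2(\DD)$ by approximating $f$ by its Taylor polynomials, using locally uniform convergence of the polynomials and continuity of point evaluations, gives $T(\xi)f=w\cdot(f\circ\varphi)$ directly, with no $H^\infty$ hypothesis needed. That argument is correct and closes the proof; the cocycle-based alternative you sketch first is vaguer and unnecessary. In short, your route is sound, coincides with the paper's except at its weakest step, and there it is the more robust of the two.
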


\beginpf
Fix $n\geq 1$. Define $f_n :  \Sigma_\theta \to  H^2(\DD)$
by 
\[
f_n(\xi)= (T(\xi)e_0)^{n-1}T(\xi)e_n-(T(\xi)e_1)^n.
\]
 Since $T(t)$ is a weighted composition operator for each $t>0$, the function $f_n$ vanishes on $\RR^+$. Thus, the analyticity of $f_n$ on $\Sigma_\theta$ implies that $f_n \equiv 0$.\\
	If $Te_0\equiv 0$, then $T(t)$ is trivial. Otherwise, the semigroup $T$ being analytic, $\sup_{ \DD\cap \Sigma_\theta} \|T(\xi)\| < +\infty$. It follows that 
	for all $\xi \in \DD\cap \Sigma_\theta$, $\| T(\xi)e_0\| \leqslant M$; i.e.,
	for all $\xi \in n\DD\cap \Sigma_\theta$, $\| T(\xi)e_0\| \leqslant M^n$. Thus,
	for all $\xi \in  \Sigma_\theta$, $ T(\xi)e_0\in H^\infty(\DD)$. The conclusion follows from Theorem~\ref{thm:wcompo}.
\endpf

\subsection{Quasicontractive analytic semigroups on the Hardy and Dirichlet space}

In order to characterise quasicontractive analytic semigroups in terms of the 
associated function $G$ we begin with the following result. Note that here and elsewhere we
use \cite[Thm.~3.9]{ACP}, which makes the hypothesis that $G \in H^2(\DD)$. This hypothesis ensures that the
generator has dense domain, but is not necessary, as, for example the case
$G(z)=-z/(z+1)$ shows: here $D(A)$ contains $(z+1)^2\CC[z]$, which is dense in $H^2(\DD)$.

\begin{thm}
 Let $G: \DD \to \CC$ be a holomorphic function such that the operator  $A$ 
defined by $Af(z)=G(z)f'(z)$ has dense domain $D(A)\subset H^2(\DD)$  (resp. $D(A)\subset \D$). Then the following are equivalent:
\begin{enumerate}
\item The operator $A$ generates a quasicontractive analytic semigroup on  $H^2(\DD)$ (resp. $\D$).
\item The operator $A$ generates an analytic semigroup of composition operators on $H^2(\DD)$ (resp. $\D$).
\item There exists $\theta \in (0,\frac \pi 2)$ such that the operators $e^{i\theta}A$, $e^{-i\theta}A$ 
and $A$ generate $C_0$-semigroups of composition operators on $H^2(\DD)$ (resp. $\D$).
\item There exists $\theta \in (0,\frac \pi 2)$ such that
\[ \sup \left\{ \re  \langle e^{\pm i\theta} Af,f\rangle : f\in D(A), \|f\|=1\right\}<\infty,\]
and $\lambda>0$ such that
\[(A-\lambda I)D(A)=H^2(\DD) \  (\hbox{resp. }\D).\]
\end{enumerate}
\end{thm}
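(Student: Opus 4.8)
The plan is to establish the cycle of implications $(1)\Rightarrow(4)\Rightarrow(3)\Rightarrow(2)\Rightarrow(1)$, leaning on the tools assembled earlier in the section: Proposition~\ref{prop:arendtelst} (the Arendt--type complex Lumer--Phillips theorem), its corollary on analyticity from three quasicontractive semigroups, Corollary~\ref{cor:dir} on analytic extensions of semigroups of composition operators, and the standard correspondence between generators $Af = Gf'$ and semiflows from \cite{ACP}.

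First, $(1)\Rightarrow(4)$: a quasicontractive analytic semigroup on a sector is, after the usual rescaling $A \mapsto A - \omega I$, contractive on some smaller sector $\Sigma_\theta$; applying Proposition~\ref{prop:arendtelst} to the rescaled generator gives that $e^{\pm i\theta}(A-\omega I)$ is dissipative, which is exactly the boundedness of $\sup\{\re\langle e^{\pm i\theta}Af,f\rangle:\|f\|=1\}$ after undoing the shift, and the surjectivity of $I-(A-\omega I)$ is the resolvent condition $(A-\lambda I)D(A)=H^2(\DD)$ for $\lambda = 1+\omega>0$ (here one also uses that $\omega$ can be chosen large enough that $\lambda$ is in the resolvent set). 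The step $(4)\Rightarrow(3)$ is then immediate from Proposition~\ref{prop:arendtelst} applied to each of $e^{i\theta}A$, $e^{-i\theta}A$ and $A$ in turn — the dissipativity (up to a shift) of each is guaranteed by the supremum condition, and surjectivity of $I-A$ (hence, after rescaling, of the relevant resolvent) transfers to $I - e^{\pm i\theta}A$ by the same estimate — giving that each of these three operators generates a quasicontractive $C_0$-semigroup; that such a semigroup of the form $e^{is}Gf'$-type automatically consists of composition operators is the content of the semiflow correspondence of \cite{ACP}, since the generator $e^{i\theta}A$ again has the form $\widetilde G f'$ with $\widetilde G = e^{i\theta}G$, so it generates a semigroup of composition operators precisely when it generates a $C_0$-semigroup at all.

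For $(3)\Rightarrow(2)$: given that $A$, $e^{i\theta}A$, $e^{-i\theta}A$ all generate $C_0$-semigroups of composition operators, these are in particular quasicontractive (composition-operator semigroups on $H^2(\DD)$ and on $\D$ are quasicontractive, by the standard norm estimates for composition operators, e.g.\ via the Littlewood subordination principle on $H^2$ and the analogous bound on $\D$ — this is where one must be slightly careful on the Dirichlet space), so the Corollary following Proposition~\ref{prop:arendtelst} yields that $A$ generates an analytic semigroup on $\Sigma_\theta$; Corollary~\ref{cor:dir} (valid since $\beta_n=O(\sqrt n)$ covers both $H^2$ and $\D$) then upgrades the analytic extension $T(\xi)$ to a composition operator for every $\xi\in\Sigma_\theta$. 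Finally $(2)\Rightarrow(1)$ is the easy direction: an analytic semigroup of composition operators is quasicontractive because each $T(\xi)$ is a composition operator and the sector norm-boundedness $\sup_{\Sigma_\theta\cap\DD}\|\widetilde T(\xi)\|<\infty$ is built into the definition of analyticity, while the exponential bound off the unit disc follows from the semigroup law.

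I expect the main obstacle to be the bookkeeping of the rescaling constants when passing between ``quasicontractive'' and ``contractive,'' and in particular verifying that one can choose a single shift $\omega$ (equivalently a single $\lambda>0$) that simultaneously makes $A-\omega I$, $e^{i\theta}(A-\omega I)$, $e^{-i\theta}(A-\omega I)$ dissipative and $\lambda$ in the common resolvent set — this is exactly the computation carried out in the Corollary after Proposition~\ref{prop:arendtelst}, so it can be quoted. A secondary subtlety is confirming quasicontractivity of composition-operator semigroups on the Dirichlet space $\D$ (as opposed to $H^2(\DD)$), which does not have an outright contractivity statement but does satisfy $\|C_{\phi_t}\|_{\D}\le Me^{\omega t}$; one should cite the relevant estimate (e.g.\ from \cite{siskakis} or \cite{ACP}) rather than reprove it. The analyticity-versus-composition interplay — that analyticity of $\xi\mapsto T(\xi)e_n$ forces $T(\xi)$ to remain a composition operator — is entirely handled by Corollary~\ref{cor:dir}, so no new work is needed there.
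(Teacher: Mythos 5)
Your cycle $(4)\Rightarrow(3)\Rightarrow(2)\Rightarrow(1)$ is essentially sound and amounts to running the paper's chain $(3)\Rightarrow(4)\Rightarrow(1)\Rightarrow(2)$ in a repackaged order, with the same external inputs: Proposition~\ref{prop:arendtelst} and its corollary, Corollary~\ref{cor:dir}, and the fact from \cite[Thm.~3.9]{ACP} that for generators of the form $Af=Gf'$ the properties ``generates a quasicontractive $C_0$-semigroup'' and ``generates a $C_0$-semigroup of composition operators'' coincide. One local slip: your justification of $(2)\Rightarrow(1)$ (that quasicontractivity is ``built into the definition of analyticity'') is not right as written, since quasicontractive means the constant $M$ equals $1$ and analyticity gives only boundedness on $\Sigma_\theta\cap\DD$; the correct ingredient is that every $C_0$-semigroup of composition operators on $H^2(\DD)$ or $\D$ is quasicontractive, which is \cite[Thm.~3.9]{ACP} and which you do invoke in your $(3)\Rightarrow(2)$ step, so this is repairable on the spot.

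The genuine gap is $(1)\Rightarrow(4)$. You assert that a quasicontractive analytic semigroup becomes, after replacing $A$ by $A-\omega I$, contractive on some smaller sector, and then apply Proposition~\ref{prop:arendtelst} in the direction (i)$\Rightarrow$(ii). That rescaling claim is the converse of what Proposition~\ref{prop:arendtelst} delivers and is false for general generators on a Hilbert space: quasicontractivity only places the numerical range of $A$ in a left half-plane, whereas condition (4) places it in a translated sector of half-angle $\frac\pi2-\theta$, and analyticity does not bridge the two. Concretely, on $\ell^2$ take $A=\bigoplus_n A_n$ with $A_n=\left(\begin{smallmatrix} -n & 2n\\ 0 & -n\end{smallmatrix}\right)$: each block has numerical range the closed disc of centre $-n$ and radius $n$, so $A$ is m-dissipative and generates a contraction semigroup, and $\|e^{\xi A_n}\|\le e^{-n\re\xi}(1+2n|\xi|)$ is uniformly bounded on every sector $\Sigma_\alpha$ with $\alpha<\frac\pi2$, so the semigroup is analytic; yet the numerical range of $A$ contains points arbitrarily close to $-\frac12-i\sqrt n$, so $e^{i\theta}(A-\omega I)$ is dissipative for no $\theta\in(0,\frac\pi2)$ and no $\omega\ge0$, i.e.\ (4) fails. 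For the operators $Af=Gf'$ of the theorem the implication $(1)\Rightarrow(4)$ is still true, but its proof must exploit the composition-operator structure: first $(1)\Rightarrow(2)$, by combining \cite[Thm.~3.9]{ACP} (the restriction to $\R_+$, being quasicontractive with generator $Gf'$, consists of composition operators) with Corollary~\ref{cor:dir}; then $(2)\Rightarrow(3)$ by restricting to the rays $e^{\pm i\theta}\R_+$; and only then $(3)\Rightarrow(4)$, reading off the numerical-range bound from the quasicontractivity of the three rotated semigroups via Lumer--Phillips. This is the route the paper takes, and it cannot be short-circuited by a purely operator-theoretic rescaling argument.
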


\begin{proof}~
\begin{itemize}
\item[1.$\Rightarrow$ 2.] We denote by $(T(t))$ the semigroup on the sector $\Sigma_\theta$ generated by the operator $A$. From \cite[Theorem 3.9]{ACP}, the restriction to $\R_+$ of this semigroup is a semigroup of composition operators. 
Now by Corollary~\ref{cor:dir} it follows that $(T(t))$ consists of composition operators. 

\item[2.$\Rightarrow$ 3.] This is immediate.

\item[3.$\Rightarrow$ 4.] 
By \cite[Theorem 3.9]{ACP} any $C_0$-semigroup of composition operators is quasi\-contractive. The
result now follows from
 the Lumer--Phillips theorem. 

\item[4.$\Rightarrow$ 1.] This follows from an obvious corollary of Proposition~\ref{prop:arendtelst}.
\end{itemize}
\end{proof}

If $G$ generates a semiflow of analytic functions on $\DD$, then
$G$ has an expression of the form $G(z)= (\alpha-z)(1-\overline\alpha z)F(z)$, where $\alpha\in \overline\DD$ and
$F: \DD \to \CC_+$ is holomorphic (see \cite{BP}). In particular, $G$ has radial limits 
almost everywhere on $\TT$, since $F$ is the composition of a M\"obius mapping and a function in $H^\infty(\DD)$.
Note that this applies to every semigroup of composition operators, independently of the
underlying Hilbert function space, since it is associated with a semiflow.
In \cite{ACP} it is shown that $A: f \mapsto Gf'$ generates 
 a $C_0$-semigroup of composition operators on $H^2(\DD)$  or $\D$ if and only if
$\operatorname{ess}\sup_{z\in\T} \re \overline z G(z) \le 0$.
As before, it is not necessary to assume that $G \in H^2(\DD)$.
This can now be applied to give easy conditions on the same operator $A$ for the case of analytic semigroups.

\begin{cor}\label{cor:analytic}
 Let $G: \DD \to \CC$ be a holomorphic function such that the operator  $A$ 
defined by $Af(z)=G(z)f'(z)$ has dense domain $D(A)\subset H^2(\DD)$  (resp. $D(A)\subset \D$). 
The operator $A$ generates an analytic semigroup of composition operators on $H^2(\DD)$ (resp. $\D$) if and only if there exists $ \theta \in (0,\frac \pi 2)$ such that for all $\alpha \in \{-\theta,0,\theta\}$
\[\operatorname{ess}\sup_{z\in\T} \re(e^{i\alpha}\overline z G(z)) \le  0.\]
\end{cor}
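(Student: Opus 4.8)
The plan is to derive the corollary from the theorem that immediately precedes it, combined with the scalar criterion of \cite{ACP}: the operator $A\colon f\mapsto Gf'$ generates a $C_0$-semigroup of composition operators on $H^2(\DD)$ (resp. $\D$) if and only if $\operatorname{ess}\sup_{z\in\T}\re(\overline z G(z))\le 0$. The observation that makes everything work is that, for any real $\alpha$, the operator $e^{i\alpha}A$ is again of the same form $f\mapsto \widetilde G f'$ with symbol $\widetilde G=e^{i\alpha}G$, and it has the very same domain $D(A)$, which is dense by hypothesis. Hence the \cite{ACP} criterion applies verbatim to $e^{i\alpha}A$ and tells us that $e^{i\alpha}A$ generates a $C_0$-semigroup of composition operators precisely when
\[
\operatorname{ess}\sup_{z\in\T}\re\bigl(\overline z\,e^{i\alpha}G(z)\bigr)=\operatorname{ess}\sup_{z\in\T}\re\bigl(e^{i\alpha}\overline z\,G(z)\bigr)\le 0 .
\]

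For the forward implication I would argue as follows. If $A$ generates an analytic semigroup of composition operators, then by the implication (2)$\Rightarrow$(3) of the preceding theorem there exists $\theta\in(0,\frac\pi2)$ such that each of $A$, $e^{i\theta}A$ and $e^{-i\theta}A$ generates a $C_0$-semigroup of composition operators; applying the displayed equivalence with $\alpha\in\{-\theta,0,\theta\}$ yields exactly the three inequalities in the statement. Conversely, if such a $\theta$ exists, then the \cite{ACP} criterion applied to $A$ (case $\alpha=0$), to $e^{i\theta}A$ and to $e^{-i\theta}A$ — all three densely defined, since they share the domain $D(A)$ — shows that each of them generates a $C_0$-semigroup of composition operators; this is condition (3) of the preceding theorem, so condition (2) holds and $A$ generates an analytic semigroup of composition operators.

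The only point requiring care is the bookkeeping behind the scalar criterion: one must be certain that the \cite{ACP} equivalence is genuinely available for an arbitrary holomorphic symbol (here $e^{i\alpha}G$) under the sole assumption that the associated operator is densely defined, rather than under the stronger hypothesis $\widetilde G\in H^2(\DD)$, and that the essential supremum over $\T$ is meaningful in that generality. This is exactly what the remark preceding the corollary, together with the discussion of the factorization of the symbols of semiflows, is there to guarantee; once that is granted, the rest is the routine combination of the two cited results described above.
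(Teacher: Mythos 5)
Your argument is correct and is precisely the one the paper intends: the corollary is stated without an explicit proof because it is the immediate combination of the equivalence (2)$\Leftrightarrow$(3) in the preceding theorem with the scalar criterion of \cite{ACP} applied to each of $A$, $e^{i\theta}A$ and $e^{-i\theta}A$, all sharing the same dense domain. Your closing remark about the criterion being available without the hypothesis $G\in H^2(\DD)$, and about the boundary values of $\overline z G(z)$ making the essential supremum meaningful, is exactly the point the paper's preceding paragraph is there to settle.
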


Geometrically, this condition says that the image of $\TT$ under $z \mapsto \overline z G(z)$ is
contained in a sector $-\Sigma_{(\frac{\pi}{2}-\theta)}$ in the left half-plane.

\subsection{Groups of composition operators}

The following remark enables one to characterize  groups of composition operators on $H^2(\DD)$
and $\D$.
\begin{prop}\label{prop:group}
 Let $G: \DD \to \CC$ be a holomorphic function such that the operator  $A$ 
defined by $Af(z)=G(z)f'(z)$ has dense domain $D(A)\subset H^2(\DD)$  (resp. $D(A)\subset \D$).  The following are equivalent.
\begin{enumerate}
\item The operator $A$ generates a $C_0$-group of composition operators.
\item $  \re (\overline{z}G(z))=0$ almost everywhere on $\TT$.
\end{enumerate}
\end{prop}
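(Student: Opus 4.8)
The strategy is to reduce the group statement to the semigroup characterisation of \cite{ACP} recalled just above: $A\colon f\mapsto Gf'$ generates a $C_0$-semigroup of composition operators on $H^2(\DD)$ (resp.\ $\D$) if and only if $\operatorname{ess}\sup_{z\in\TT}\re(\overline z G(z))\le 0$. The only abstract ingredient I would add is the standard fact (see \cite{ABHN}) that an operator $A$ on a Banach space generates a $C_0$-group $(T(t))_{t\in\RR}$ if and only if both $A$ and $-A$ generate $C_0$-semigroups, in which case the $C_0$-semigroup generated by $-A$ is exactly $(T(-t))_{t\ge 0}$. Since $-Af=(-G)f'$ and $D(-A)=D(A)$ is dense, the operator $-A$ is of precisely the same type as $A$ with $G$ replaced by $-G$, so the \cite{ACP} criterion applies to it verbatim.

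For the implication $1.\Rightarrow 2.$ I would argue as follows. If $A$ generates a $C_0$-group $(T(t))_{t\in\RR}$ all of whose members are composition operators, then $(T(t))_{t\ge 0}$ is a $C_0$-semigroup of composition operators generated by $A$, whence $\operatorname{ess}\sup_{z\in\TT}\re(\overline z G(z))\le 0$; and $(T(-t))_{t\ge 0}$ is a $C_0$-semigroup of composition operators generated by $-A$, whence, applying the same criterion with $-G$ in place of $G$, $\operatorname{ess}\sup_{z\in\TT}\re(-\overline z G(z))\le 0$, i.e.\ $\re(\overline z G(z))\ge 0$ almost everywhere on $\TT$. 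Combining the two inequalities forces $\re(\overline z G(z))=0$ a.e. For the converse $2.\Rightarrow 1.$, assuming $\re(\overline z G(z))=0$ a.e.\ on $\TT$ we have both $\operatorname{ess}\sup_{z\in\TT}\re(\overline z G(z))\le 0$ and $\operatorname{ess}\sup_{z\in\TT}\re(-\overline z G(z))\le 0$, so by \cite{ACP} the operators $A$ and $-A$ each generate a $C_0$-semigroup of composition operators, say $(T_+(t))_{t\ge 0}$ and $(T_-(t))_{t\ge 0}$; setting $T(t)=T_+(t)$ for $t\ge 0$ and $T(t)=T_-(-t)$ for $t<0$ then yields a $C_0$-group with generator $A$ consisting of composition operators.

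I do not expect a genuine obstacle here: the mathematical content is entirely carried by the boundary criterion of \cite{ACP} and by the group/semigroup duality. The one point that deserves a word of care is the claim, used in $1.\Rightarrow 2.$, that the negative-time part $(T(-t))_{t\ge 0}$ of the group is itself a $C_0$-semigroup of composition operators to which the criterion applies; this is immediate, however, since each $T(-t)$ is a composition operator by hypothesis and $(T(-t))_{t\ge 0}$ is precisely the $C_0$-semigroup generated by $-A$, which is again an operator of the form $f\mapsto \widetilde G f'$ with $\widetilde G=-G$ holomorphic and with the same dense domain.
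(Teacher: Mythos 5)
Your proposal is correct and is exactly the argument the paper gives, only written out in more detail: the paper's proof likewise reduces the group statement to the fact that $A$ generates a $C_0$-group of composition operators if and only if both $A$ and $-A$ generate $C_0$-semigroups of composition operators, and then applies the boundary criterion of \cite[Thm.~3.9]{ACP} to $G$ and $-G$. Nothing further is needed.
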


\beginpf
The result follows directly  from \cite[Thm.~3.9]{ACP}
 since $A$ generates a $C_0$-group of composition operators if and only if 
both $A$ and $-A$ generate $C_0$-semigroups of composition operators.
\endpf

\begin{cor}
The only analytic group of composition operators on $H^2(\DD)$   or $\D$  is the trivial semigroup.
\end{cor}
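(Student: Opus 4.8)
The plan is to combine the two characterizations already obtained with the structural description of $G$. Suppose $A$, acting by $Af=Gf'$, generates an analytic group of composition operators. Restricting the analytic extension to a sector $\Sigma_\theta$ exhibits $A$ as the generator of an analytic semigroup of composition operators, while restricting to $\RR$ exhibits $A$ as the generator of a $C_0$-group of composition operators. I would then apply Corollary~\ref{cor:analytic} to get some $\theta\in(0,\frac{\pi}{2})$ with $\operatorname{ess}\sup_{z\in\T}\re(e^{i\alpha}\overline zG(z))\le0$ for $\alpha\in\{-\theta,0,\theta\}$, so that, as noted after that corollary, the radial limits of $z\mapsto\overline zG(z)$ lie a.e.\ in the sector $-\Sigma_{\frac{\pi}{2}-\theta}$; and apply Proposition~\ref{prop:group} to get $\re(\overline zG(z))=0$ a.e.\ on $\TT$, so that these radial limits also lie a.e.\ on the imaginary axis.

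Since $-\Sigma_{\frac{\pi}{2}-\theta}$ meets $i\RR$ only at $0$ (because $\frac{\pi}{2}-\theta<\frac{\pi}{2}$), it follows that $\overline zG(z)=0$, hence $G(z)=0$, for a.e.\ $z\in\TT$. Concretely, writing $\overline zG(z)=it(z)$ with $t$ real a.e., the two inequalities become $-t\sin\theta\le0$ and $t\sin\theta\le0$, which force $t=0$ a.e.\ because $\sin\theta>0$.

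The only real work is to upgrade ``$G$ has radial limit $0$ a.e.\ on $\TT$'' to ``$G\equiv0$''; this is the step I expect to be the main obstacle, since a general holomorphic function on $\DD$ can have vanishing radial limits a.e.\ without being identically zero, so one must use that $G$ comes from a semiflow. By the representation recalled just before Corollary~\ref{cor:analytic}, $G(z)=(\alpha-z)(1-\overline\alpha z)F(z)$ with $\alpha\in\overline\DD$ and $F:\DD\to\CC_+$ holomorphic. The polynomial factor is nonzero a.e.\ on $\TT$, so $F$ has radial limit $0$ a.e. Put $g=1/(1+F)$; then $|g|\le 1/(1+\re F)\le1$ on $\DD$, so $g\in H^\infty(\DD)$, and $g$ has radial limit $1$ a.e.\ on $\TT$. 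By the Riesz uniqueness theorem (a nonzero function in $H^\infty(\DD)$ cannot vanish on a subset of $\TT$ of positive measure), $1-g\equiv0$, whence $F\equiv0$ and $G\equiv0$. Therefore $A=0$ on its dense domain, so $T(t)=I$ for all $t$ and its analytic extension is the constant $I$; the group is trivial.
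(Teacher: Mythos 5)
Your proposal is correct and follows essentially the same route as the paper, which simply cites Corollary~\ref{cor:analytic} and Proposition~\ref{prop:group}: the sector condition and the a.e.\ purely imaginary condition on $\overline z G(z)$ are jointly compatible only if $G$ vanishes a.e.\ on $\TT$. Your additional step upgrading this to $G\equiv 0$ via the Berkson--Porta representation $G=(\alpha-z)(1-\overline\alpha z)F$ and the bounded function $1/(1+F)$ is a correct and worthwhile elaboration of a point the paper leaves implicit.
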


\beginpf
This follows immediately from Corollary~\ref{cor:analytic} and Proposition~\ref{prop:group}.
\endpf

This corollary can be seen as a consequence of more general results: an analytic
group is norm-continuous at $0$, and so its generator is bounded (see \cite{SF}).  However,
a non-trivial group of composition operators never has a bounded generator.

\begin{ex}
The semigroup $(C_{\varphi_t})$ of composition operators is a group if and only if for one (and thus any) $t_0\in\R_+$, the operator $C_{\varphi_{t_0}}$ is invertible: thus, if and only if $(\varphi_t)$ is a group of automorphisms. In that case, the   group will satisfy $\varphi_t^{-1}=\varphi_{-t}$. Considering the automorphism semigroup given by \[\varphi_t(z)=\frac{z+\tanh t}{1+z\tanh t}\] with generator $G(z)=1-z^2$, we   see  that $\overline z G(z)=-2i \IM(z) \in i\R$. Thus the given condition is satisfied.
\end{ex}

The easy example $G(z)=z(z-1)$ shows that it is possible for a $C_0$-semigroup of composition
operators to be neither analytic nor a group.

\section{Compactness of semigroups}
\label{sec:3}
\subsection{Immediate and eventual compactness}

We recall that a semigroup $(T(t))_{t \ge 0}$ is said to be {\em immediately compact}    
if
the operators $T(t)$ are compact   for all $t>0$. 
A semigroup $(T(t))_{t \ge 0}$ is said to be {\em eventually compact} if there exists $t_0>0$ such that $T(t)$ is compact for all $t\geq t_0$.  
Similar definitions hold for immediately/eventually Hilbert--Schmidt and trace-class.

We begin with an elementary observation.

\begin{prop}\label{prop:notimcomp}
Suppose that for some $t_0>0$ one has $|\phi_{t_0}(\xi)|=1$ on a set of positive measure; then
$C_{\phi_{t_0}}$ is not compact on $H^2(\DD)$ or $\D$, and so
the semigroup $(C_{\phi_t})_{t \ge 0}$ is  not immediately compact.
\end{prop}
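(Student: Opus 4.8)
The plan is to show that the non-compactness of a single composition operator $C_{\phi_{t_0}}$ follows from the boundary behaviour of $\phi_{t_0}$, and then that non-compactness at time $t_0$ propagates backwards in time via the semigroup identity. First I would recall the standard obstruction to compactness of a composition operator on $H^2(\DD)$: if $C_\psi$ is compact, then $C_\psi$ maps weakly null sequences to norm null sequences, and the normalized reproducing kernels $k_w/\|k_w\|$ tend weakly to $0$ as $|w|\to 1$. A short computation with $C_\psi^* k_w = k_{\psi(w)}$ shows that compactness forces $\|k_{\psi(w)}\|/\|k_w\|\to 0$, i.e. $(1-|w|^2)/(1-|\psi(w)|^2)\to 0$ as $|w|\to 1$. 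Equivalently, the Julia--Carath\'eodory angular-derivative condition must hold: $\psi$ has no finite angular derivative at any boundary point, and in particular $\psi$ cannot have radial limits of modulus $1$ on a set of positive measure. This is exactly contradicted by the hypothesis $|\phi_{t_0}(\xi)|=1$ on a set $E\subset\TT$ of positive measure; by a Fatou/Lusin-type argument one produces a sequence $w_n\to\xi_0$ (for suitable $\xi_0\in E$) along which $|\phi_{t_0}(w_n)|\to 1$ fast enough, or one simply invokes the known result that a composition operator inducing a symbol with boundary values of modulus one on a positive-measure set is not compact. For the Dirichlet space $\D$ the same strategy works using the reproducing kernels of $\D$, whose norm growth $\|k_w^{\D}\|^2 \sim \frac{1}{|w|^2}\log\frac{1}{1-|w|^2}$ gives an analogous necessary condition.

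Having disposed of the operator $C_{\phi_{t_0}}$, I would then derive the statement about the semigroup. Suppose for contradiction that $(C_{\phi_t})_{t\ge0}$ is immediately compact. Then in particular $C_{\phi_{t_0/2}}$ is compact (since $t_0/2>0$), and hence $C_{\phi_{t_0}} = C_{\phi_{t_0/2}}\circ C_{\phi_{t_0/2}} = C_{\phi_{t_0/2}}^2$ is a product of compact operators, so compact --- contradicting the first part. (If one prefers to avoid the semigroup identity one can equally note that for any $0<s<t_0$, $\phi_{t_0} = \phi_{t_0-s}\circ\phi_s$, and the semiflow property $|\phi_{t_0}(\xi)|=1$ on a positive-measure set forces, after using that each $\phi_t$ is a self-map of $\DD$, that $|\phi_s(\xi)|=1$ on a positive-measure set too --- but the factorization argument is cleaner and needs no extra work.)

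The main obstacle is the first step: establishing rigorously that a composition symbol with unimodular boundary values on a positive-measure set induces a non-compact composition operator, and doing so uniformly for both $H^2(\DD)$ and $\D$. The cleanest route is the reproducing-kernel test together with Julia--Carath\'eodory theory: if $|\phi_{t_0}|=1$ on a set $E$ of positive measure, pick a Lebesgue density point $\xi_0\in E$; one shows $\liminf_{r\to1^-}\frac{1-|\phi_{t_0}(r\xi_0)|}{1-r}<\infty$, so by Julia's lemma $\phi_{t_0}$ has a finite angular derivative at $\xi_0$, which makes $(1-|w|^2)/(1-|\phi_{t_0}(w)|^2)$ bounded below along the radius to $\xi_0$, and hence $\|k_{\phi_{t_0}(w)}\|/\|k_w\| \not\to 0$. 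Since $C_{\phi_{t_0}}^*$ maps the weakly-null normalized kernels to vectors of non-vanishing norm, $C_{\phi_{t_0}}^*$ --- and thus $C_{\phi_{t_0}}$ --- is not compact. I expect the density-point argument and the interchange of "positive measure" with "finite angular derivative" to be where care is needed; everything after that is routine.
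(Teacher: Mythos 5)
Your reduction of the semigroup statement to the non-compactness of the single operator $C_{\phi_{t_0}}$ is fine (indeed it is immediate from the definition of immediate compactness, since $t_0>0$; the factorization through $C_{\phi_{t_0/2}}^2$ is unnecessary). The problem is the step you yourself flag as the crux, and it is a genuine gap, not merely a place ``where care is needed''. You claim that at a Lebesgue density point $\xi_0$ of $E=\{\xi:|\phi_{t_0}(\xi)|=1\}$ one has $\liminf_{r\to1^-}\frac{1-|\phi_{t_0}(r\xi_0)|}{1-r}<\infty$, i.e.\ a finite angular derivative. This implication is false for general self-maps of $\DD$: an inner function has unimodular boundary values almost everywhere, yet need not have a finite angular derivative at any point of $\TT$. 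For instance, a Blaschke product with zeros $(1-4^{-n})e^{2\pi ik/2^n}$, $0\le k<2^n$, satisfies the Blaschke condition, but $\sum_j\frac{1-|a_j|^2}{|\xi-a_j|^2}=\infty$ for every $\xi\in\TT$, so the Julia--Carath\'eodory quotient has infinite liminf everywhere while $|B^*|=1$ a.e. The density-point heuristic fails because that liminf is controlled by $\int\frac{d\mu(\theta)}{|e^{i\theta}-\xi_0|^2}$ plus a Green potential of the zeros of $\phi_{t_0}$, and both can diverge at every density point of $E$. One could try to rescue your route by using that the $\phi_t$ are univalent (being members of a semiflow), for which the angular-derivative criterion does characterize compactness, but that requires nontrivial conformal-mapping input that you do not supply, and in any case the chain ``positive measure $\Rightarrow$ finite angular derivative'' still has to be proved, not asserted.

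The paper avoids all of this with an elementary argument: the orthonormal basis $(e_n)$, $e_n(z)=z^n$, is weakly null, and $C_{\phi_{t_0}}e_n=\phi_{t_0}^n$ satisfies
\[
\|\phi_{t_0}^n\|_{H^2}^2=\frac{1}{2\pi}\int_{\TT}|\phi_{t_0}^*|^{2n}\,dm\;\ge\;\frac{m(E)}{2\pi}>0,
\]
so the images do not converge to $0$ in norm and $C_{\phi_{t_0}}$ cannot be compact; the Dirichlet case is handled by citation. Your fallback of ``simply invoking the known result'' is essentially a citation of this very argument; if you want a self-contained proof, replace the Julia--Carath\'eodory machinery with the computation above.
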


\beginpf
For the Hardy space, this follows since the weakly null sequence
$(e_n)_{n \ge 0}$ with $e_n(z)=z^n$ is mapped into $(\phi_{t_0}^n)$, which does not converge to
$0$ in norm.
For the Dirichlet space the result follows from \cite[Ex.~6.3]{primer}.
\endpf

A slightly stronger result can be shown for the Hardy space,
using the following theorem \cite[Chap. 2, Thm 3.3]{pazy}, which links
immediate compactness with continuity in norm.
\begin{thm}\label{thm:cpresolv}
Let $(T(t))_{t \ge 0}$ be a $C_0$-semigroup and let $A$ be its infinitesimal generator.
Then $(T(t))_{t \ge 0}$ is immediately compact if and only if\\ (i)~the resolvent $R(\lambda,A)$ is
compact for all (or for one) $\lambda \in \CC \setminus \sigma(A)$, and\\ 
(ii)~$\lim_{s \to t} \|T(s)-T(t)\| =0$ for all $t>0$.
\end{thm}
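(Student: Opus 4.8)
The plan is to derive both implications from the Laplace-transform formula $R(\lambda,A)=\int_0^\infty e^{-\lambda s}T(s)\,ds$, valid for $\RE\lambda$ exceeding the exponential growth bound of the semigroup, together with the fact that the compact operators form a norm-closed ideal in $\LX$. Throughout, $B_1$ denotes the closed unit ball of $X$.

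\emph{Necessity.} Suppose $T(t)$ is compact for all $t>0$. For (ii) I would fix $t>0$ and, for $h>0$, use $T(t+h)-T(t)=(T(h)-I)T(t)$: since $\overline{T(t)B_1}$ is compact and $T(h)\to I$ strongly, the convergence is uniform on that set, so $\|T(t+h)-T(t)\|\to 0$. For left continuity one writes $T(t)-T(t-h)=(T(h)-I)T(t-h)$ for $0<h<t/2$ and observes that $T(t-h)B_1\subset T(t/2)(M B_1)$, where $M=\sup_{0\le s\le t/2}\|T(s)\|$; the right-hand set has compact closure, so the same argument applies uniformly in $h$. For (i), split $R(\lambda,A)=\int_0^\delta+\int_\delta^R+\int_R^\infty$: the first piece has norm at most $M\delta$ and the last tends to $0$ in norm as $R\to\infty$, while on $[\delta,R]$ the integrand is norm-continuous by (ii), so $\int_\delta^R e^{-\lambda s}T(s)\,ds$ is a norm limit of Riemann sums, each a finite linear combination of the compact operators $T(s_i)$, hence compact. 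Letting $R\to\infty$ and then $\delta\to 0$ shows $R(\lambda,A)$ is compact, and the resolvent identity propagates compactness to $R(\mu,A)$ for every $\mu\in\CC\setminus\sigma(A)$.

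\emph{Sufficiency.} Assume (i) and (ii), and fix $\lambda$ for which $R(\lambda,A)$ is compact. Put $V(t)=\int_0^t e^{-\lambda s}T(s)\,ds$. Differentiating $s\mapsto e^{-\lambda s}T(s)x$ for $x\in D(A)$ and integrating over $[0,t]$ gives $V(t)(A-\lambda)x=e^{-\lambda t}T(t)x-x$; substituting $x=R(\lambda,A)y$ and using that $T(t)$ commutes with $R(\lambda,A)$ produces
\[
V(t)=R(\lambda,A)\bigl(I-e^{-\lambda t}T(t)\bigr),
\]
so $V(t)$ is compact for every $t\ge 0$. Hence $V(t+h)-V(t)=\int_t^{t+h}e^{-\lambda s}T(s)\,ds$ is compact, and by (ii) the map $s\mapsto e^{-\lambda s}T(s)$ is norm-continuous on $(0,\infty)$, so for fixed $t>0$ one has $\tfrac1h\bigl(V(t+h)-V(t)\bigr)\to e^{-\lambda t}T(t)$ in operator norm as $h\to 0^+$. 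Thus $T(t)$ is a norm limit of compact operators, hence compact, which is immediate compactness.

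\emph{Expected difficulty.} The delicate step is the sufficiency direction. Norm continuity by itself only lets one write $\int_t^{t+h}e^{-\lambda s}T(s)\,ds$ as a norm limit of Riemann sums, which is useless here because we do not yet know the operators $T(s)$ are compact; the identity $V(t)=R(\lambda,A)(I-e^{-\lambda t}T(t))$ is exactly what forces these averaged integrals to be compact, after which differentiation in $t$ transfers compactness to $T(t)$ itself. A secondary technical point, needed for left norm-continuity in the necessity direction, is exhibiting a single relatively compact set that absorbs $T(t-h)B_1$ for all small $h>0$.
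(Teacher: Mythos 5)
Your proof is correct, and it is essentially the standard argument: the paper does not prove this theorem but cites it from Pazy (\emph{Semigroups of linear operators}, Chap.~2, Thm.~3.3), and your two directions --- the factorization $T(t+h)-T(t)=(T(h)-I)T(t)$ plus approximation of the Laplace integral by compacts for necessity, and the identity $V(t)=R(\lambda,A)(I-e^{-\lambda t}T(t))$ followed by norm-differentiation for sufficiency --- reproduce the proof given there. Nothing further is needed.
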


Combining this with the following result due to Berkson \cite{berkson}, we 
see that, under the hypotheses of
of Proposition \ref{prop:notimcomp}, in the case of the Hardy space, the semigroup is not
norm-continuous and hence not immediately compact.

\begin{thm}
Let $\phi: \DD \to \DD$ be analytic. If $m\{\xi \in \TT: |\phi(\xi)|=1\}=\delta>0$, then
considered as operators on $H^2(\DD)$, we have
$\|C_\phi-C_\psi\| \ge \sqrt{\delta/2}$ for all $\psi: \DD \to \DD$ analytic with $\psi \ne \phi$.
\end{thm}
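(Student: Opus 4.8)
The plan is to probe $C_\phi-C_\psi$ with the orthonormal monomials $e_n\colon z\mapsto z^n$ and then average the resulting lower bounds over $n$ by a Ces\`aro argument; the only ingredient beyond elementary Hilbert-space and measure theory will be the fact that two distinct bounded analytic functions cannot have the same radial limits on a set of positive measure. \emph{Reduction.} Since $\phi,\psi$ are bounded, $\phi-\psi\in H^\infty(\DD)\subset H^2(\DD)$ and is not identically zero; a nonzero $H^2(\DD)$-function has logarithmically integrable boundary modulus, so its radial limits vanish only on a null set. Hence $m\{\xi\in\TT:\phi(\xi)=\psi(\xi)\}=0$, where from now on $\phi$ and $\psi$ also denote their a.e.\ defined radial limit functions, and therefore the set $E=\{\xi\in\TT:|\phi(\xi)|=1,\ \psi(\xi)\neq\phi(\xi)\}$ still satisfies $m(E)=\delta$.

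\emph{Testing on monomials.} For $n\geq1$ one has $(C_\phi-C_\psi)e_n=\phi^n-\psi^n$, and since $\phi^n,\psi^n\in H^\infty(\DD)$ their radial boundary functions are a.e.\ the $n$-th powers of those of $\phi$ and $\psi$; thus
\[
\|(C_\phi-C_\psi)e_n\|^2=\int_\TT\bigl|\phi(\xi)^n-\psi(\xi)^n\bigr|^2\,dm(\xi)\ \geq\ \int_E\bigl|\phi(\xi)^n-\psi(\xi)^n\bigr|^2\,dm(\xi).
\]
As $\|e_n\|=1$, for every $N\in\NN$ we obtain
\[
\|C_\phi-C_\psi\|^2\ \geq\ \frac1N\sum_{n=1}^N\|(C_\phi-C_\psi)e_n\|^2\ \geq\ \int_E\frac1N\sum_{n=1}^N\bigl|\phi(\xi)^n-\psi(\xi)^n\bigr|^2\,dm(\xi).
\]
Writing $w(\xi)=\overline{\phi(\xi)}\,\psi(\xi)$ on $E$, we have $|w(\xi)|\leq1$, $w(\xi)\neq1$, and, using $|\phi(\xi)|=1$,
\[
\bigl|\phi(\xi)^n-\psi(\xi)^n\bigr|^2=1-2\re w(\xi)^n+|w(\xi)|^{2n}.
\]

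\emph{Conclusion and the main point.} A geometric-series estimate shows that as $N\to\infty$ the Ces\`aro averages $\frac1N\sum_{n=1}^N w(\xi)^n$ tend to $0$ (both when $|w(\xi)|<1$ and when $|w(\xi)|=1\neq w(\xi)$), while $\frac1N\sum_{n=1}^N|w(\xi)|^{2n}$ tends to $\mathbf{1}_{\{|w(\xi)|=1\}}$; hence the inner average above converges pointwise on $E$ to $1+\mathbf{1}_{\{|w(\xi)|=1\}}\geq1$. The integrands being bounded by $4$ on the finite-measure space $\TT$, dominated convergence gives
\[
\|C_\phi-C_\psi\|^2\ \geq\ \lim_{N\to\infty}\int_E\frac1N\sum_{n=1}^N\bigl|\phi(\xi)^n-\psi(\xi)^n\bigr|^2\,dm(\xi)\ \geq\ m(E)=\delta,
\]
so in fact $\|C_\phi-C_\psi\|\geq\sqrt\delta\geq\sqrt{\delta/2}$. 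The step I expect to be the crux is the \emph{choice of test vectors}: the natural first attempt, testing with normalized reproducing kernels $K_w$ and letting $w\to\TT$, breaks down because $\phi$ may have infinite angular derivative almost everywhere on $E$, so $\|C_\phi^*K_w\|$ need not stay bounded away from $0$; using the monomials, together with the Ces\`aro average to make the estimate uniform in $n$, sidesteps any appeal to the fine boundary behaviour of $\phi$ beyond the existence of radial limits.
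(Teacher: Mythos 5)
Your argument is correct, and it is worth noting at the outset that the paper itself offers no proof of this statement: it is quoted as a known result of Berkson \cite{berkson}, so there is no internal argument to compare against. Your reduction is sound (a nonzero $H^2$ function cannot vanish on a set of positive measure, so $m(E)=\delta$), the monomial test vectors give $\|C_\phi-C_\psi\|^2\ge \frac1N\sum_{n=1}^N\|\phi^n-\psi^n\|_2^2$ legitimately since each summand is individually a lower bound for $\|C_\phi-C_\psi\|^2$, and the pointwise identity $|\phi^n-\psi^n|^2=1-2\re w^n+|w|^{2n}$ with $w=\overline\phi\psi$ on $E$ is exactly right. The Ces\`aro step is the genuinely clever part: on the portion of $E$ where $|\psi|=1$ one has $|w|=1$ with $w\neq 1$, so a fixed power $w^n$ can return arbitrarily close to $1$ and no single choice of $n$ gives a uniform lower bound; averaging kills the oscillatory term $\frac1N\sum w^n$ by the geometric-sum bound $2/(N|1-w|)$ while the term $\frac1N\sum|w|^{2n}$ survives as $\mathbf{1}_{\{|w|=1\}}$, and dominated convergence (integrands bounded by $4$) finishes the proof. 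In fact your argument yields the stronger conclusion $\|C_\phi-C_\psi\|\ge\sqrt{\delta}$ (and even $\sqrt{2\delta'}$ on the part $E'$ of $E$ where $|\psi|=1$), improving Berkson's constant $\sqrt{\delta/2}$; such improvements are known in the later literature on isolation of composition operators, so this is consistent. Your closing remark about reproducing kernels is also apt: normalized kernels $k_w/\|k_w\|$ are pushed around by $C_\phi^*$ in a way controlled by angular derivatives, which need not exist finitely on $E$, so the monomials are indeed the right probes here.
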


We shall now give a sufficient condition for immediate compactness of a semigroup of composition
operators, in terms of the associated function $G$. First, we recall a classical necessary and
sufficient condition for compactness of a composition operator $C_\phi$  in the case when
$\phi$ is univalent \cite[pp. 132, 139]{CM}.

\begin{thm}
For $\phi: \DD \to \DD$ analytic and univalent, the composition operator $C_\phi$ is compact
on $H^2(\DD)$
if and only if
\[
\lim_{z \to \xi} \frac{1-|z|^2} {1-|\phi(z)|^2} = 0
\]
for all $\xi \in \TT$.
\end{thm}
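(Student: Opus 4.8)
The statement is a known characterization (Cowen--MacCluskey); the plan is to prove it via the Nevanlinna-counting-function description of compact composition operators on $H^2(\DD)$ specialized to the univalent case. First I would recall the general criterion: $C_\phi$ is compact on $H^2(\DD)$ if and only if the Nevanlinna counting function $N_\phi$ satisfies $N_\phi(w) = o\bigl(\log\frac{1}{|w|}\bigr)$ as $|w| \to 1^-$. When $\phi$ is univalent, $N_\phi(w) = \log\frac{1}{|\phi^{-1}(w)|}$ for $w$ in the range of $\phi$ and $N_\phi(w)=0$ otherwise, so the general criterion becomes
\[
\log\frac{1}{|\phi^{-1}(w)|} = o\left(\log\frac{1}{|w|}\right), \qquad |w| \to 1^-, \ w \in \phi(\DD).
\]
Writing $w = \phi(z)$, and using $\log\frac{1}{|z|} \sim \frac{1-|z|^2}{2}$ and $\log\frac{1}{|w|} \sim \frac{1-|w|^2}{2}$ near the boundary, this is equivalent to
\[
\frac{1-|z|^2}{1-|\phi(z)|^2} \to 0
\]
as $|\phi(z)| \to 1$, and then one checks this is the same as requiring the limit along every radius $z \to \xi$, $\xi \in \TT$, since $|\phi(z)| \to 1$ forces $|z| \to 1$ (otherwise $\phi$ would be bounded away from $\TT$ on that subsequence).

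The key steps, in order: (1) state the Julia--Carathéodory / Nevanlinna background, in particular the identity $\int_0^{2\pi} N_\phi(re^{i\theta})\, d\theta / 2\pi$ controls $\|C_\phi e_n\|^2$ up to constants, or more directly cite the Shapiro compactness theorem; (2) compute $N_\phi$ explicitly using univalence — here injectivity means each $w$ has at most one preimage, so the sum defining $N_\phi$ collapses to a single term; (3) perform the asymptotic translation between $\log(1/|z|)$ and $1-|z|^2$, which is elementary; (4) argue the equivalence of ``$\frac{1-|z|^2}{1-|\phi(z)|^2}\to 0$ as $|\phi(z)|\to 1$'' with the radial-limit formulation over all $\xi \in \TT$, using that the quotient is automatically bounded (indeed $\le$ something like $\frac{1+|\phi(0)|}{1-|\phi(0)|}$ by Schwarz--Pick) away from the boundary, so only boundary behavior matters.

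The main obstacle I expect is \emph{not} the computation but the careful handling of the direction where $w$ ranges only over $\phi(\DD)$: the Nevanlinna counting function vanishes outside the range, so compactness imposes no condition there, and one must make sure the ``$\lim_{z\to\xi}$'' statement for all $\xi\in\TT$ is genuinely equivalent and not strictly stronger. The resolution is that by univalence $\phi(\DD)$ is a simply connected domain whose closure meets $\TT$ exactly where $\phi$ has boundary values of modulus $1$, and near any $\xi$ with $|\phi(z)|\not\to 1$ the quotient tends to $0$ trivially (numerator $\to 0$, denominator bounded below). Once this bookkeeping is settled, the rest is the standard Shapiro-theorem argument, and citing \cite{CM} for the counting-function machinery lets us keep the proof short.
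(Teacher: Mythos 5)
The paper does not actually prove this statement: it is quoted as a classical result with a pointer to \cite[pp.~132, 139]{CM}, where it is obtained from the Carleson-measure characterization of bounded/compact composition operators together with the Julia--Carath\'eodory theorem (the condition $\lim_{z\to\xi}(1-|z|^2)/(1-|\phi(z)|^2)=0$ is precisely the absence of a finite angular derivative at $\xi$, and Koebe distortion links the pullback measure of Carleson boxes to the derivative of the univalent map). Your route is different but equally legitimate: you reduce the statement to Shapiro's counting-function theorem ($C_\phi$ compact iff $N_\phi(w)=o(\log(1/|w|))$), collapse $N_\phi$ to a single term by univalence, and translate $\log(1/|z|)\sim(1-|z|^2)/2$. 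The bookkeeping you flag as the main obstacle is handled correctly: if $|\phi(z_n)|\to 1$ then $|z_n|\to 1$ (else a subsequence of $(z_n)$ converges inside $\DD$ and $\phi(z_n)$ stays inside $\DD$), so one may pass to a subsequence converging to some $\xi\in\TT$ and invoke the hypothesis there; conversely, along any approach $z\to\xi$ where the denominator stays bounded below, the quotient tends to $0$ because the numerator does, and the global Schwarz--Pick bound $\frac{1-|z|^2}{1-|\phi(z)|^2}\le\frac{1+|\phi(0)|}{1-|\phi(0)|}$ is available though not strictly needed. The only caveats are cosmetic and structural: the attribution should read Cowen--MacCluer, not ``MacCluskey''; and your argument is a reduction to Shapiro's theorem, which is a substantially deeper general result than the univalent case itself, so the proof is short only because the heavy lifting is outsourced -- the approach in \cite{CM} is more self-contained for this special case. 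I see no genuine gap.
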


The following proposition collects together standard results on Hilbert--Schmidt and
trace-class composition operators.

\begin{prop}\label{prop:hs}
For $\phi:\DD\to\DD$ analytic  with $\|\phi\|_\infty<1$, the composition operator $C_\phi$ is 
trace-class on $H^2(\DD)$ \cite[p.~149]{CM};
if in addition $\phi \in \D$, then $C_\phi$ is Hilbert--Schmidt on $\D$ \cite[Cor.~6.3.3]{primer}.
\end{prop}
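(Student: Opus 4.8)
The statement packages two classical facts, and the plan is to sketch the standard arguments behind the cited references, treating the two assertions separately.

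For the trace-class assertion on $H^2(\DD)$, I would exploit the hypothesis $\|\phi\|_\infty<1$ by a factorisation. Pick $\rho$ with $\|\phi\|_\infty<\rho<1$ and set $\psi=\phi/\rho$, so that $\psi$ is an analytic self-map of $\DD$ and hence $C_\psi$ is bounded on $H^2(\DD)$ by Littlewood's subordination principle. Writing $\delta_\rho(z)=\rho z$, one checks directly that $C_\phi=C_\psi\circ C_{\delta_\rho}$, since $(C_\psi C_{\delta_\rho}f)(z)=f(\rho\psi(z))=f(\phi(z))$. The operator $C_{\delta_\rho}$ is diagonal in the monomial basis, $C_{\delta_\rho}e_n=\rho^n e_n$, so its singular values are $(\rho^n)_{n\ge0}$ and $\|C_{\delta_\rho}\|_1=\sum_{n\ge0}\rho^n=(1-\rho)^{-1}<\infty$; thus $C_{\delta_\rho}$ is trace-class. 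Since the trace-class ideal is stable under multiplication by bounded operators, $C_\phi=C_\psi\circ C_{\delta_\rho}$ is trace-class.

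For the Hilbert–Schmidt assertion on $\D$, I would compute the Hilbert–Schmidt norm against the orthonormal basis $\{1\}\cup\{z^n/\sqrt n:n\ge1\}$, using the equivalent description $\|f\|_\D^2=|f(0)|^2+\frac1\pi\int_\DD|f'|^2\,dA$. Since $C_\phi 1=1$ and $C_\phi(z^n/\sqrt n)=\phi^n/\sqrt n$, this gives $\|C_\phi\|_2^2=1+\sum_{n\ge1}\frac1n\|\phi^n\|_\D^2$ with $\frac1n\|\phi^n\|_\D^2=\frac1n|\phi(0)|^{2n}+\frac n\pi\int_\DD|\phi|^{2n-2}|\phi'|^2\,dA$. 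The first terms sum to $-\log(1-|\phi(0)|^2)<\infty$ because $|\phi(0)|\le\|\phi\|_\infty<1$. For the second terms, all integrands being nonnegative one exchanges sum and integral (monotone convergence) and uses $\sum_{n\ge1}n\,t^{n-1}=(1-t)^{-2}$ with $t=|\phi(z)|^2\le\|\phi\|_\infty^2<1$, obtaining the bound $\frac1\pi\int_\DD\frac{|\phi'|^2}{(1-|\phi|^2)^2}\,dA\le\frac1{(1-\|\phi\|_\infty^2)^2}\cdot\frac1\pi\int_\DD|\phi'|^2\,dA$, which is finite precisely because $\phi\in\D$. Hence $C_\phi$ is Hilbert–Schmidt on $\D$.

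There is no genuine obstacle here: both parts are bookkeeping around standard facts. The only steps needing a little care are the verification of the operator identity $C_\phi=C_\psi\circ C_{\delta_\rho}$ and the legitimacy of interchanging summation and integration, both immediate. The essential role of the hypothesis $\|\phi\|_\infty<1$ is exactly to make the geometric series $\sum\rho^n$ and the majorant $(1-\|\phi\|_\infty^2)^{-2}$ finite.
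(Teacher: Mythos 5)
Your proposal is correct, and it essentially reconstructs the standard arguments behind the two citations: the paper itself offers no proof of this proposition, merely pointing to \cite[p.~149]{CM} (where the trace-class claim is obtained by exactly your factorisation through the diagonal operator $C_{\delta_\rho}$) and to \cite[Cor.~6.3.3]{primer} (where the Hilbert--Schmidt claim follows from the integral criterion $\int_\DD |\phi'|^2(1-|\phi|^2)^{-2}\,dA<\infty$, which is precisely the quantity your basis computation produces). Both of your verifications are sound, so there is nothing to add.
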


Siskakis \cite{siskakis} has given sufficient conditions for compactness of the resolvent
operator $R(\lambda,A)$ (which is a necessary condition for the immediate compactness of the
semigroup), in the case $G(z)=-zF(z)$, although they are not necessary, as the case
$G(z)=-z$ illustrates. 

\begin{thm}\label{thm:immcompact}
Let  $\delta>0$; suppose that there exists $\epsilon>0$ such that
\[
\re(\overline z G(z)) \le -\delta \qquad \hbox{for all} \quad z \quad \hbox{with} \quad 1-\epsilon < |z| < 1.
\]
Then $A$, defined by $Af=Gf'$, generates an immediately compact semigroup
of composition operators on $H^2(\DD)$ 
and $\D$. Indeed the semigroup is immediately trace-class. 
\end{thm}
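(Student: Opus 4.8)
The plan is to exploit the condition $\re(\overline z G(z)) \le -\delta$ near the boundary to show two things simultaneously: first, that the associated semiflow $(\phi_t)$ pulls the disc strictly inside itself in finite time, hence $\|\phi_t\|_\infty < 1$ for every $t>0$; and second, that the resulting composition operators are trace-class via Proposition~\ref{prop:hs}. The starting point is that, since $\re(\overline z G(z)) \le 0$ on $\TT$ (which follows from the hypothesis and continuity of the radial limits), Corollary~\ref{cor:analytic}'s prerequisite from \cite{ACP} gives that $A=Gf'$ generates a $C_0$-semigroup of composition operators $C_{\phi_t}$ on $H^2(\DD)$ and $\D$. So the whole content is to upgrade compactness to immediate trace-class.

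First I would analyse the ODE $\partial_t \phi_t = G\circ\phi_t$ for the semiflow. Fix $z$ with $|z|$ close to $1$ and consider $u(t) = |\phi_t(z)|^2 = \phi_t(z)\overline{\phi_t(z)}$. Differentiating, $u'(t) = 2\re\big(\overline{\phi_t(z)}\,G(\phi_t(z))\big)$. As long as $\phi_t(z)$ stays in the annulus $1-\epsilon < |w| < 1$, the hypothesis gives $u'(t) \le -2\delta$, so $u$ decreases at a uniform linear rate and $\phi_t(z)$ must exit the annulus inward within time at most $\epsilon \cdot (\text{something})/(2\delta)$; more carefully, $u(t) \le u(0) - 2\delta t$ while in the annulus, and $u(0) \le 1$, so after time $t$ we have $|\phi_t(z)|^2 \le 1 - 2\delta t$ provided the trajectory has not already entered $|w| \le 1-\epsilon$ and stayed there. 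A small argument shows that once the trajectory enters a slightly smaller disc it cannot re-enter the outer band (again because $u' \le -2\delta < 0$ whenever $|w|$ is in the band), so in fact $\sup_{|z|<1} |\phi_t(z)|^2 \le \max\{(1-\epsilon)^2, 1-2\delta t\} < 1$ for all $t>0$. Thus $\|\phi_t\|_\infty < 1$ for every $t>0$.

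Given $\|\phi_t\|_\infty<1$, Proposition~\ref{prop:hs} immediately yields that $C_{\phi_t}$ is trace-class on $H^2(\DD)$ for every $t>0$, so the semigroup is immediately trace-class on $H^2(\DD)$; since trace-class operators are compact, it is immediately compact there. For the Dirichlet space, Proposition~\ref{prop:hs} requires in addition $\phi_t \in \D$. But each $\phi_t$ is an analytic self-map of $\DD$ with $\|\phi_t\|_\infty<1$, hence $\phi_t$ maps $\DD$ into a compact subdisc; by subordination (or directly, since $\phi_t'$ is bounded on $\DD$ because $\phi_t$ is analytic on a neighbourhood of $\overline\DD$ in the relevant sense — more robustly, $\phi_t(\DD)$ has compact closure in $\DD$ so the Dirichlet integral $\int_\DD |\phi_t'|^2 \le $ the area of $\phi_t(\DD)$ counted with multiplicity, which is finite by Littlewood subordination applied to univalent or general maps) one gets $\phi_t \in \D$, indeed with Hilbert--Schmidt norm bounds; then Proposition~\ref{prop:hs} gives $C_{\phi_t}$ Hilbert--Schmidt, hence compact, on $\D$. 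One should note that \cite[Cor.~6.3.3]{primer} as quoted only yields Hilbert--Schmidt rather than trace-class on $\D$; the "immediately trace-class" conclusion is most cleanly stated for $H^2(\DD)$, and for $\D$ one may either cite a trace-class criterion directly or be content with immediate compactness — I would check whether the primer gives a trace-class statement under $\|\phi\|_\infty<1$ and cite accordingly.

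The main obstacle is the dynamical claim that $\|\phi_t\|_\infty < 1$ for all $t>0$: one must rule out the possibility that, for some boundary point $\xi \in \TT$, there are points $z\to\xi$ whose trajectories linger near the boundary for a long time before the drift condition kicks in, or re-enter the band. The uniform bound $\re(\overline z G(z)) \le -\delta$ throughout the whole band $1-\epsilon<|z|<1$ is exactly what prevents this — it forces a strictly negative, uniformly bounded-away-from-zero derivative of $|\phi_t(z)|^2$ on the entire band — so the argument works, but the bookkeeping (showing the trajectory enters $\{|w|\le 1-\epsilon'\}$ for some fixed $\epsilon'<\epsilon$ within a time depending only on $\delta,\epsilon$, and never returns to $\{|w|>1-\epsilon\}$) needs to be done carefully, comparing $u(t)$ against the linear barrier $u(0)-2\delta t$ on maximal subintervals where the trajectory stays in the band.
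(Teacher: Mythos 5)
Your proposal is correct and follows essentially the same route as the paper: differentiate $|\phi_t(z)|^2$ along the flow, use the uniform drift $-2\delta$ in the band $1-\epsilon<|w|<1$ (together with a bound $M$ on the growth rate inside $|w|\le 1-\epsilon$) to conclude $\|\phi_t\|_\infty<1$ for every $t>0$, then invoke Proposition~\ref{prop:hs}. The one point you leave open --- trace-class on $\D$, where the cited proposition only gives Hilbert--Schmidt --- is settled in the paper by the factorization $C_{\phi_t}=C_{\phi_{t/2}}^2$, a product of two Hilbert--Schmidt operators and hence trace-class.
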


\beginpf
For $t>0$ and $z \in \DD$ we have
\begin{eqnarray*}
\frac{\partial}{\partial t} |\phi_t(z)|^2 &=& 2 \re \left( \overline{\phi_t(z)} \frac{\partial}{\partial t} \phi_t(z) \right) \\
&=& 2 \re \left( \overline{\phi_t(z)} G(\phi_t(z)) \right)
\end{eqnarray*}
Hence 
\[
\frac{\partial}{\partial t} |\phi_t(z)|^2  \le  -2\delta
\]
whenever $1-\epsilon < |\phi_t(z)| < 1$. 
Also, by compactness, there exists an $M>0$ such that
\[
\frac{\partial}{\partial t} |\phi_t(z)|^2 \le M
\]
whenever $ |\phi_t(z)| \le 1-\epsilon$.

Choose $t_0 > 0$ such that
\[
a:=(1-\epsilon)^2+t_0 M < 1.
\]
For a fixed $z \in \DD$ we consider $\phi_t(z)$ over the interval $[0,t_0]$. Suppose first that $|\phi_t(z)| > (1-\epsilon)$ for all
$t \in [0,t_0]$. 
Then $|\phi_t(z)|^2 \le |z|^2 - 2\delta t $ for all $t \in [0,t_0]$.

Otherwise let 
\[
t_1=\inf \left \{t \in [0,t_0]: |\phi_t(z)| \le 1- \epsilon\right\}.
\]
Then $|\phi_t(z)|^2 \le a$ for all $t \in [t_1,t_0]$ and $|\phi_t(z)|^2 \le |z|^2 - 2\delta t$ for all $t \in [0,t_1]$.

Thus $\|\phi_t\|^2_\infty \le \max\{1-2\delta t, a\}<1$ for $0 < t \le t_0$ and hence $C_{\phi_t}$ is
Hilbert--Schmidt
for these $t$, 
by Propositionı~\ref{prop:hs}, and hence 
trace-class 
for all $t>0$ since $C_{\phi_t}=C_{\phi_{t/2}}^2$.
\endpf

\begin{cor}
Let $\eta>0$; suppose that
$\esssup_{z \in \TT} \re(\overline z G(z)) \le -\eta$, 
and
that $\re G'$ is bounded on $\DD$. 
Then $A$, defined by $Af=Gf'$, generates an immediately trace-class semigroup
of composition operators on $H^2(\DD)$ 
and $\D$. 
\end{cor}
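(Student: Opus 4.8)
The plan is to deduce this corollary from Theorem~\ref{thm:immcompact} by showing that the hypotheses here (an essential-sup bound of $-\eta$ on $\TT$ together with boundedness of $\re G'$ on $\DD$) force the stronger-looking, but in fact local, condition of that theorem: namely the existence of $\delta>0$ and $\epsilon>0$ with $\re(\overline z G(z))\le-\delta$ on the annulus $1-\epsilon<|z|<1$. Once this is established, Theorem~\ref{thm:immcompact} gives immediately that $A=Gf'$ generates an immediately trace-class semigroup of composition operators on both $H^2(\DD)$ and $\D$, which is exactly the conclusion.

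First I would fix the target value $\delta=\eta/2$. The function to control is $u(z)=\re(\overline z G(z))$. The idea is that $u$ extends (in an $L^\infty$, nontangential sense) to a boundary function on $\TT$ that is $\le-\eta$ a.e., and the hypothesis on $\re G'$ provides the quantitative modulus of continuity needed to propagate this inequality inward to a full annulus. Concretely, write $u(z)=\re(\overline z G(z))$ and differentiate radially: along a radius $z=re^{i\psi}$, $\frac{\partial}{\partial r}\big(\overline z G(z)\big)=e^{-i\psi}G(z)+\overline z e^{i\psi}G'(z)+\cdots$; more cleanly, since $G$ is holomorphic, the relevant computation is
\[
\frac{d}{dr}\,u(re^{i\psi}) = \re\!\Big( e^{-i\psi}G(re^{i\psi}) + r\,G'(re^{i\psi})\Big).
\]
The term $\re(r G'(re^{i\psi}))$ is bounded in modulus by $\sup_\DD|\re G'|=:C$. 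For the term $\re(e^{-i\psi}G)$ one uses that $G$ itself, being of the form $(\alpha-z)(1-\overline\alpha z)F(z)$ with $F$ mapping into $\CC_+$, is in $H^p$ for $p<1$ at least and has controlled radial behaviour near $\TT$; in particular on an annulus $1-\epsilon_0<|z|<1$ the quantity $|G(z)|$ can be bounded in terms of its nontangential maximal function, but a cleaner route is to note that $\overline z G(z)$ has nontangential limit with real part $\le-\eta$ a.e., and then invoke the bound on $\re G'$ to say that $u$ cannot rise faster than linearly as one moves inward: $u(re^{i\psi})\le u^*(e^{i\psi}) + (1-r)\cdot(\text{bound})$. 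Choosing $\epsilon$ small enough that $(1-r)\cdot(\text{bound})<\eta/2$ for $1-\epsilon<r<1$ then yields $u(z)\le-\eta/2=-\delta$ throughout that annulus, which is precisely the hypothesis of Theorem~\ref{thm:immcompact}.

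The main obstacle is making the "propagate the boundary inequality inward" step fully rigorous: $u$ is only defined and bounded essentially, its boundary values are only nontangential limits, and one must control $\re(e^{-i\psi}G(re^{i\psi}))$ near the boundary, not merely $\re(rG'(re^{i\psi}))$. The cleanest fix is probably to observe that $\frac{d}{dr}\big(\overline{z}G(z)\big)$ along a radius can be rewritten so that \emph{both} summands are controlled by $\re G'$ and $G$ on the annulus, and that $G$ being bounded on a fixed smaller annulus (it is continuous on $\DD$) plus the essential bound on $\partial\DD$ lets one apply a mean-value / fundamental-theorem-of-calculus argument along radii to almost every ray, then conclude for all $z$ in the annulus by continuity of $u$. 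Alternatively, and perhaps more simply, one can integrate the differential inequality for $\frac{\partial}{\partial t}|\phi_t(z)|^2=2u(\phi_t(z))$ directly, exactly as in the proof of Theorem~\ref{thm:immcompact}, using that the orbit spends its time near $\TT$ in a region where $u\le-\delta$; but since Theorem~\ref{thm:immcompact} is already available, it is most economical to reduce to it. I would therefore present the corollary as: the stated hypotheses imply the annular condition of Theorem~\ref{thm:immcompact} with $\delta=\eta/2$, the verification being the radial modulus-of-continuity estimate above, and then simply cite Theorem~\ref{thm:immcompact}.
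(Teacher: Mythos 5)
Your overall strategy coincides with the paper's: deduce the corollary from Theorem~\ref{thm:immcompact} by showing that the hypotheses force $\re(\overline z G(z))\le-\delta$ on some annulus $1-\epsilon<|z|<1$. The gap is in how you propagate the boundary inequality inward. Your radial computation gives
\[
\frac{d}{dr}\,\re\bigl(\overline z G(z)\bigr)\Big|_{z=re^{i\psi}}=\re\bigl(e^{-i\psi}G(re^{i\psi})\bigr)+r\,\re\bigl(G'(re^{i\psi})\bigr),
\]
and while the second summand is indeed bounded by $\sup_\DD|\re G'|$, nothing in the hypotheses controls the first summand pointwise. The devices you invoke for it do not work: membership of $G$ in $H^p$ for $p<1$ gives an $L^p$ bound on the nontangential maximal function, not an $L^\infty$ bound; and ``$G$ is continuous on $\DD$'' only gives boundedness on compact subsets of the open disc, not on an annulus reaching $\TT$. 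So the asserted estimate $u(re^{i\psi})\le u^*(e^{i\psi})+(1-r)\cdot(\text{bound})$ is unjustified, and this is precisely the step that carries the content of the corollary. (It is in fact repairable: boundedness of $\re G'$ forces $G'\in H^1$, hence by Hardy's inequality $G$ has absolutely summable Taylor coefficients and extends continuously to $\overline\DD$; then $\re(\overline z G)$ is continuous on $\overline\DD$ and $\le-\eta$ a.e.\ on $\TT$, hence everywhere on $\TT$, hence $\le-\eta/2$ on an annulus by uniform continuity. But you would need to supply this argument; it is not what you wrote.)

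The paper sidesteps the issue with a different device: set $K(z):=G(z)+\eta z$, so that $\esssup_{z\in\TT}\re(\overline z K(z))\le 0$, and invoke the interior inequality of \cite[Thm.~4.3]{ACP}, namely $2\re(\overline z K(z))+(1-|z|^2)\re K'(z)\le 0$ for all $z\in\DD$. Rearranging gives
\[
2\re(\overline z G(z))\le-\eta(1+|z|^2)-(1-|z|^2)\re G'(z)\qquad(z\in\DD),
\]
and the hypothesis $|\re G'|\le M$ then immediately yields $\re(\overline z G(z))\le-\delta$ once $1-|z|$ is small compared with $\eta/M$; no pointwise control of $G$ itself is ever needed. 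You should either adopt this route or insert the Hardy-inequality argument sketched above; as written, your proof does not close.
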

\beginpf
Define $K(z):=G(z)+\eta z$, so that $\esssup_{z \in \TT}\re (\overline z K(z)) \le 0$. By
\cite[Thm. 4.3]{ACP}, it follows that
\[
2\re (\overline z K(z) )+ (1-|z|^2) \re K'(z) \le 0 \qquad (z \in \DD),
\]
and hence
\[
2\re (\overline z G(z) )+ (1-|z|^2) \re G'(z) \le -\eta(1+|z|^2) \le -\eta \qquad (z \in \DD).
\]
Now if $\| \re G'(z) \|_\infty \le M$, 
then we have $\re (\overline z G(z) ) \le -\eta$ whenever
$ |z| \ge 1- \dfrac{\eta}{2M}$.
The result now follows 
from Theorem \ref{thm:immcompact}.
\endpf

Easy examples of the above are  $G(z)=-z$ and $G(z)=z(z^2-2)$. However, Siskakis \cite{siskakis} gives the example
$G(z)=(1-z) \log (1-z)$, where the semigroup is immediately compact while   $\esssup \re \overline z G(z)=0$ on $\TT$.

\begin{rem}\label{rem:dw}{\rm
Note that all the examples of immediately compact semigroups have the Denjoy--Wolff point of $\phi_t$ in the open disc 
$\DD$.  This is always the case: for
let $\phi: \DD \to \DD$ be analytic, such that
$C_\phi$ is a compact composition operator on $H^2(\DD)$. Then the Denjoy--Wolff point of $\phi$
lies in $\DD$, since
if $\phi$ has its Denjoy--Wolff point on $\TT$, then $\phi$ has an angular derivative there,
of modulus at most 1. But this contradicts the compactness of $C_\phi$, by \cite[Cor. 3.14]{CM}.}
\end{rem}

\subsection{Applications of the semiflow model}

In this section we work with an immediately compact semigroup
$(C_{\phi_t})_{t \ge 0}$ acting on $H^2(\DD)$ or $\D$.
As in Remark \ref{rem:dw} we know that the  
Denjoy--Wolff point of the semiflow $(\phi_t)_{t \ge 0}$ lies in $\DD$, and by conjugating by the
automorphism $b_\alpha$, where
\[
b_\alpha(z):= \frac{\alpha-z}{1-\overline\alpha z},
\]
we may suppose without loss of generality that $\alpha=0$. In this case there
is a semiflow model
\[
\phi_t(z)= h^{-1}(e^{-ct}h(z)),
\]
where $c \in \CC$ with $\re c \ge 0$, and $h: \DD \to \Omega$ is a conformal
bijection between $\DD$ and a domain $\Omega \subset \CC$, with $h(0)=0$
and $\Omega$ is spiral-like  or star-like (if $c$ is real), in the sense that
\[
e^{-ct}w \in \Omega \quad \hbox{for all} \quad w \in \Omega \quad \hbox{and} \quad t \ge 0.
\]
For more details we refer to \cite{sis1,siskakis}.

Even in the case when  $(C_{\phi_t})_{t \ge 0}$ is only eventually compact, we have $\re c>0$.
Indeed, if $c=i\theta$ with $\theta \in \RR$, then there exist arbitrarily large $t>0$ such that $\theta t \in 2\pi \ZZ$,
and then $C_{\phi_t}$ is the identity mapping, and hence not compact. 

\begin{lem}\label{lem:ombound}
Let $(\phi_t)_{t \ge 0}$ be a semiflow on $\DD$ with Denjoy--Wolff point $0$. Then the
following are equivalent:\\
1. There is a $t_0>0$ with $\|\phi_{t_0}\|_\infty < 1$;\\
2. There is a $t_0>0$ with $\|\phi_{t}\|_\infty < 1$ for all $t \ge t_0$;\\
3. In the semiflow model for $(\phi_t)_{t \ge 0}$, $\re c>0$, and the domain $\Omega$ is bounded.
\end{lem}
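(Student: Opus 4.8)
The plan is to prove the cyclic chain of implications $3 \Rightarrow 2 \Rightarrow 1 \Rightarrow 3$. The implication $2 \Rightarrow 1$ is trivial, so the work is in the other two.

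\medskip

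\textbf{Proof of $3 \Rightarrow 2$.} Assume $\re c > 0$ and $\Omega$ bounded, say $\Omega \subset R\DD$. Since $h$ is a conformal bijection of $\DD$ onto $\Omega$ with $h(0)=0$, the Koebe distortion/growth theorem applied to the normalized map $h/h'(0)$ gives a bound of the form $|h(z)| \ge \kappa (1-|z|)$ for $|z|$ near $1$ — more precisely, since $h$ is univalent and bounded, the boundary values $h(\xi)$ for $\xi \in \TT$ cannot all be $0$, and in fact one can bound $|h(z)|$ from below on an annulus $1-\rho < |z| < 1$. Actually the cleanest route: for $|z| = r$ close to $1$, one has $|h(z)| \ge \varepsilon(r) > 0$ where $\varepsilon(r)$ can be taken bounded below on $[r_0, 1)$ because $h$ extends to a homeomorphism in a suitable sense or simply because $\{h(z): |z| \ge r_0\}$ is a compact subset of $\overline\Omega \setminus \{0\}$... this needs care since $0 \in \partial\Omega$ is possible. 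The robust argument is: fix $t_0$ with $|e^{-ct_0}| R < $ (something). Given $z \in \DD$, we have $h(\phi_{t_0}(z)) = e^{-ct_0} h(z)$, so $|h(\phi_{t_0}(z))| = e^{-(\re c)t_0}|h(z)| \le e^{-(\re c)t_0} R$. Now if $\phi_{t_0}$ did \emph{not} satisfy $\|\phi_{t_0}\|_\infty < 1$, there would be a sequence $z_n$ with $|z_n| \to 1$ and $\phi_{t_0}(z_n) \to \zeta \in \TT$; passing to $h$ and using that $|h(z_n)| \to $ dist-type quantity... The key fact I will need and should isolate as the main lemma is: for a bounded univalent $h$ with $h(0)=0$, $\liminf_{|z|\to 1} |h(z)| > 0$ fails in general, so instead I argue via the spiral-likeness directly: iterate. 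Since $\phi_{2t_0}(z) = h^{-1}(e^{-2ct_0}h(z))$ etc., and $\|\phi_t\|_\infty$ is nonincreasing in $t$ (because $\phi_{t+s} = \phi_t \circ \phi_s$ maps $\DD$ into $\phi_t(\DD)$), it suffices to produce one $t_0$ with $\|\phi_{t_0}\|_\infty<1$. For that I use: $\Omega$ bounded and spiral-like with respect to $e^{-ct}$ implies $e^{-ct_0}\Omega$ is compactly contained in $\Omega$ for $t_0$ large, hence $\phi_{t_0}(\DD) = h^{-1}(e^{-ct_0}\Omega)$ is a compact subset of $\DD = h^{-1}(\Omega)$, giving $\|\phi_{t_0}\|_\infty < 1$. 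The statement "$e^{-ct_0}\overline\Omega \subset \Omega$ for large $t_0$" when $\re c > 0$ and $\Omega$ is bounded spiral-like is the crux: $\overline\Omega$ is compact, $e^{-ct_0}\overline\Omega$ shrinks toward $0 \in \overline\Omega$, and spiral-likeness gives $e^{-ct_0}\overline\Omega \subset \overline\Omega$; upgrading to the open set requires showing no point escapes to $\partial\Omega$, which follows because $e^{-cs}(e^{-ct_0}\overline\Omega) \subset \overline\Omega$ for all $s\ge 0$ means $e^{-ct_0}\overline\Omega$ lies in the set of points whose whole forward spiral-orbit stays in $\overline\Omega$; one checks this set is contained in $\Omega$ using that $h^{-1}$ is defined exactly on $\Omega$. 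I expect this topological upgrade to be the main obstacle and will handle it by a normal-families / maximum-principle argument on $\phi_t$ directly rather than on $\Omega$.

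\medskip

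\textbf{Proof of $1 \Rightarrow 3$.} Assume $\|\phi_{t_0}\|_\infty < 1$. First, $\re c > 0$: this is exactly the observation recorded just before the lemma (if $c = i\theta$ then $C_{\phi_t}$ is the identity for arbitrarily large $t$, so $\|\phi_t\|_\infty = 1$, contradicting that $\|\phi_t\|_\infty \le \|\phi_{t_0}\|_\infty < 1$ for $t \ge t_0$). For the boundedness of $\Omega$: we have $\phi_{t_0}(\DD) \subset r\DD$ with $r := \|\phi_{t_0}\|_\infty < 1$, hence $e^{-ct_0}\Omega = h(\phi_{t_0}(\DD)) \subset h(r\DD)$, and $h(r\DD)$ is a bounded set (continuous image of a compact set), so $\Omega = e^{ct_0}\,h(r\overline\DD) \cup \ldots$ — more carefully, $\Omega = \bigcup_{n\ge 0} e^{nct_0} \cdot (e^{-nct_0}\Omega)$ is not obviously bounded from this. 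Instead: $e^{-ct_0}\Omega \subset h(r\DD) =: \Omega'$, a bounded set with $\overline{\Omega'} \subset \Omega$. Then $\Omega = e^{ct_0}\Omega'$? No — $\Omega \supsetneq e^{ct_0}\Omega'$ only if... Actually $e^{-ct_0}\Omega \subset \Omega'$ gives $\Omega \subset e^{ct_0}\Omega'$, which IS bounded since $\Omega'$ is bounded and $e^{ct_0}$ is a fixed scalar. That closes it. So $1 \Rightarrow 3$ is in fact short once $\re c > 0$ is in hand.

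\medskip

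So the genuine content is the $3 \Rightarrow 2$ direction, and within it the passage from "$\Omega$ bounded spiral-like, $\re c>0$" to "$e^{-ct_0}\Omega$ compactly contained in $\Omega$ for some $t_0$". I will write that step by fixing any compact $K \subset \Omega$ with $0 \in \inside K$ (possible since $0 \in \Omega$ open), noting $e^{-ct}\overline\Omega \to \{0\} \subset \inside K$ uniformly as $t \to \infty$ (as $\re c>0$ and $\overline\Omega$ bounded), so $e^{-ct_0}\overline\Omega \subset K \subset \Omega$ for $t_0$ large; then $\phi_{t_0}(\overline\DD)$ — interpreting $\phi_{t_0}$ on $\DD$ — satisfies $\phi_{t_0}(\DD) = h^{-1}(e^{-ct_0}\Omega) \subset h^{-1}(K)$, and $h^{-1}(K)$ is a compact subset of $\DD$, whence $\|\phi_{t_0}\|_\infty < 1$. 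Monotonicity of $t \mapsto \|\phi_t\|_\infty$ then yields $\|\phi_t\|_\infty < 1$ for all $t \ge t_0$, completing $3 \Rightarrow 2$.
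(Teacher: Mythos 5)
Your final arguments are correct and follow essentially the same route as the paper: the semigroup identity $\phi_t=\phi_{t_0}\circ\phi_{t-t_0}$ for the monotonicity of $t\mapsto\|\phi_t\|_\infty$, the relation $e^{-ct_0}\Omega=h(\phi_{t_0}(\DD))$ combined with continuity of $h^{-1}$ near $0$ (your compact neighbourhood $K$ of $0$ plays the role of the paper's appeal to continuity of $h^{-1}$ at $0$, and it also disposes of the "upgrade $e^{-ct_0}\overline\Omega\subset\overline\Omega$ to $\subset\Omega$" worry you raise), and the periodicity of $e^{-ct}$ when $\re c=0$ for the necessity of $\re c>0$. The one cosmetic difference is that you get boundedness of $\Omega$ directly from the inclusion $\Omega\subset e^{ct_0}h\bigl(r\overline\DD\bigr)$ with $r=\|\phi_{t_0}\|_\infty$, where the paper argues by contradiction with an unbounded sequence in $\Omega$; once the abandoned detours (the false Koebe-type lower bound on $|h|$, the spiral-likeness digression) are deleted, what remains is a complete and slightly more streamlined proof.
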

\beginpf
1. $\Rightarrow$ 2. This follows since $\phi_t(z)=\phi_{t_0}(\phi_{t-t_0}(z))$ for all $t \ge t_0$.\\
2. $\Rightarrow$ 3. Since $\phi_t$ is not the identity mapping on $\DD$ for all $t \ge t_0$,
we have that $\re c>0$ by the argument above. Assume that $\Omega$ is unbounded; then
there is a sequence $(z_n)_n$ in $\Omega$ with $|z_n| \to \infty$; clearly also $|e^{-ct}z_n| \to \infty$
for each fixed $t \ge 0$. Since $\|\phi_{t_0}\|_\infty<1$, there exists a subsequence $(z_{n_k})_k$ of $(z_n)_n$ 
such that $(h^{-1}(e^{-ct_0}z_{n_k}))_k$ tends to $\xi\in\DD$. Therefore $e^{-ct_0}z_{n_k}\to h(\xi)$,
a contradiction since $(z_{n_k})_k$ is unbounded.\\
3. $\Rightarrow$ 1. If $M=\sup\{|z|: z \in \Omega\} < \infty$ and $\re c>0$, then for all $\epsilon>0$
we have
\[
|e^{-ct}h(z)| \le e^{-(\re c)t}M < \epsilon
\]
for $t$ sufficiently large. Then, since $h(0)=0$,   $h^{-1}$ is continuous, and
$\phi_t(z)=h^{-1}(e^{-ct}h(z))$, it follows that $\|\phi_t\|_\infty<1$ for sufficiently 
large $t$.
\endpf

We recall that a topological space is locally connected if every point has 
a neighbourhood base of connected open sets.

\begin{thm}\label{thm:appmodel}
Let $(C_{\phi_t})_{t \ge 0}$  be an immediately compact semigroup on $H^2(\DD)$ or $\D$,
such that in the semiflow model $\partial \Omega$ is locally connected.
Then the following conditions are equivalent:\\
1. There exists a $t_0 >0$ such that $\|\phi_{t_0}\|_\infty < 1$;\\
2. For all $t>0$ one has $\|\phi_t\|_\infty < 1$.\\
Therefore, if  there exists a $t_0 >0$ such that $\|\phi_{t_0}\|_\infty < 1$, then
$(C_{\phi_t})_{t \ge 0}$
is immediately trace-class.
\end{thm}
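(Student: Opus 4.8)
The implication $2 \Rightarrow 1$ is trivial, so the content is $1 \Rightarrow 2$. The plan is to argue by contradiction: suppose $\|\phi_{t_0}\|_\infty < 1$ for some $t_0 > 0$ but there exists $s_0 \in (0, t_0)$ with $\|\phi_{s_0}\|_\infty = 1$ (the only way $2$ can fail, since by Lemma~\ref{lem:ombound} condition~1 already forces $\|\phi_t\|_\infty < 1$ for all $t \ge t_0$, and one checks via $\phi_t = \phi_{s_0}\circ\phi_{t-s_0}$ that the set of $t$ with $\|\phi_t\|_\infty<1$ is an up-set). By Lemma~\ref{lem:ombound}, condition~1 gives $\re c > 0$ and $\Omega$ bounded, so $h$ is a conformal bijection of $\DD$ onto a bounded domain whose boundary is locally connected; by Carathéodory's theorem $h$ extends to a continuous map $\overline\DD \to \overline\Omega$, and $h(\TT) = \partial\Omega$. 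The failure $\|\phi_{s_0}\|_\infty = 1$ means there is a sequence $z_n \in \DD$ with $\phi_{s_0}(z_n) = h^{-1}(e^{-cs_0}h(z_n)) \to \xi$ for some $\xi \in \TT$; equivalently, passing to the boundary via the Carathéodory extension, there is a point $w \in \partial\Omega$ with $e^{-cs_0}w = h(\xi) \in \partial\Omega$.

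The key geometric step is to exploit the spiral-likeness of $\Omega$: $e^{-ct}w \in \Omega$ for all $w \in \Omega$, $t \ge 0$, and hence by continuity $e^{-ct}\overline\Omega \subset \overline\Omega$. I want to show that $e^{-cs_0}w \in \partial\Omega$ with $w \in \partial\Omega$ is impossible — more precisely that it forces a whole half-line of the logarithmic spiral $\{e^{-ct}w : t \ge 0\}$ to lie in $\partial\Omega$, which contradicts boundedness of $\Omega$ unless the spiral converges to $0$, but $0 \in \Omega$ (as $h(0)=0$ is an interior point), so the spiral must enter $\Omega$. Concretely: if $w \in \partial\Omega$ and $e^{-cs_0}w \in \partial\Omega$, iterate — $e^{-n c s_0}w \in \partial\Omega$ for all $n \ge 0$ by the semigroup relation applied to $\phi_{n s_0}$, or directly since $e^{-cs_0}$ maps $\overline\Omega$ into $\overline\Omega$ and maps the specific boundary point $w$ to a boundary point, one argues that it must map an open arc of $\partial\Omega$ near $w$ into $\partial\Omega$ (here local connectedness and the fact that $h$ is a homeomorphism onto its image off a small exceptional set is used, or one uses that $e^{-cs_0}$ is an open map on $\CC$ so $e^{-cs_0}U$ is open for a neighborhood $U$ of $w$, and $e^{-cs_0}(U \cap \Omega) \subset \Omega$ open while $e^{-cs_0}(U \cap \partial\Omega)$ meets $\partial\Omega$). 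Since $e^{-ncs_0}w \to 0$ as $n \to \infty$ (because $\re c > 0$), and $0$ is interior to $\Omega$, for large $n$ we get $e^{-ncs_0}w \in \Omega$, contradicting $e^{-ncs_0}w \in \partial\Omega$. This contradiction establishes $1 \Rightarrow 2$.

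For the final sentence: once $\|\phi_t\|_\infty < 1$ for all $t > 0$, Proposition~\ref{prop:hs} gives that $C_{\phi_t}$ is trace-class on $H^2(\DD)$ (and, since each $\phi_t \in \D$ when it maps into a compact subdisc, also Hilbert--Schmidt on $\D$, hence trace-class via $C_{\phi_t} = C_{\phi_{t/2}}^2$). So the semigroup is immediately trace-class.

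\textbf{Main obstacle.} The delicate point is the passage from "one boundary point maps to a boundary point under $e^{-cs_0}$'' to "a whole spiral orbit stays on $\partial\Omega$''. A single boundary point being sent to $\partial\Omega$ does not obviously iterate, since $e^{-cs_0}w \in \partial\Omega$ only tells us the image point is on the boundary, not that nearby boundary points stay on the boundary. I expect the clean route is to work with the boundary correspondence of $\phi_{s_0}$ itself rather than with $e^{-cs_0}$ abstractly: $\|\phi_{s_0}\|_\infty = 1$ together with the maximum principle and the fact that $\phi_{s_0}(\DD)$ is relatively compact in $\DD$ except for the "bad'' boundary behaviour should be pushed through using that the radial limits of $\phi_{s_0}$ have modulus $1$ only on a set which, by Berkson's theorem / the $\D$-analogue, has positive measure — then $\phi_{t_0} = \phi_{s_0} \circ \phi_{t_0 - s_0}$ would have $\|\phi_{t_0}\|_\infty = 1$ unless $\phi_{t_0 - s_0}$ omits the relevant part of $\partial\DD$, and chasing this through the conformal model $h$ with Carathéodory's extension (where local connectedness is exactly what licenses the continuous boundary extension) yields the spiral trapped in $\partial\Omega$. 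Handling the exceptional set where $h^{-1}$ may fail to be single-valued on $\partial\Omega$ is the technical heart of the argument.
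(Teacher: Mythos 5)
The implication $2 \Rightarrow 1$, the reduction to a first ``bad time'' $s_0$, the appeal to Lemma~\ref{lem:ombound} and the Carath\'eodory extension, and the trace-class conclusion at the end are all fine. But the central step of your argument --- iterating ``$w\in\partial\Omega$ and $e^{-cs_0}w\in\partial\Omega$'' to ``$e^{-ncs_0}w\in\partial\Omega$ for all $n$'' and then letting the orbit converge to $0$ --- is not merely unproved (as you concede in your ``main obstacle'' paragraph); it is false. Spiral-likeness gives $e^{-ct}\overline\Omega\subset\overline\Omega$, but nothing prevents a boundary point from being carried into the interior at a later time. The paper's own example in Section~3.4, with $\Omega=\DD\cup\{0<\re z <2,\ 0<\Im z<1\}$, exhibits exactly this: $w=2$ lies on $\partial\Omega$ and $e^{-t}w\in\partial\Omega$ for $0\le t\le\log 2$, yet $e^{-t}w\in\DD\subset\Omega$ for $t>\log 2$. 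Worse, your main argument never uses the hypothesis of immediate compactness, and that same example (Denjoy--Wolff point $0$, $\partial\Omega$ a Jordan curve hence locally connected, $\|\phi_{t_0}\|_\infty<1$ for $t_0>\log 2$ but $\|\phi_t\|_\infty=1$ for $t<\log 2$) shows that $1\Rightarrow 2$ is simply false without that hypothesis. So no argument along your lines can be completed.

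The paper's proof exploits spiral-likeness in the opposite direction: if $\xi_1\in\partial\Omega$ and $e^{-ct_1}\xi_1\in\partial\Omega$, then every \emph{intermediate} point $e^{-ct}\xi_1$, $0\le t\le t_1$, lies on $\partial\Omega$, since an interior intermediate point would force $e^{-ct_1}\xi_1=e^{-c(t_1-t)}\bigl(e^{-ct}\xi_1\bigr)$ into the open set $\Omega$. This yields a nondegenerate arc of the spiral contained in $\partial\Omega$; local connectedness gives the continuous extension of $h$ to $\overline\DD$ mapping $\TT$ onto $\partial\Omega$, and the preimage of (half of) that arc is a subset of $\TT$ of positive measure on which $|\phi_{t_1/2}|=1$. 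Proposition~\ref{prop:notimcomp} then contradicts immediate compactness. Your closing paragraph gropes toward something of this kind, but Berkson's theorem (a lower bound on operator-norm distances) is not what delivers the positive measure, and the decisive invocation of immediate compactness through Proposition~\ref{prop:notimcomp} is absent from your write-up.
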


\beginpf
The only thing to prove is that 1. $\Rightarrow$ 2.

We work with the semiflow model with Denjoy-Wolff point $0$, so that
\beq\label{eq:semiflowmodel}
\phi_{t}(z)=h^{-1}(e^{-ct}h(z)),
\eeq
with $h: \DD \to \Omega$ and $c$ as above.
Assume that for some $t_1 >0$ we have $\|\phi_{t_1}\|_\infty=1$.
Then there is a sequence $(z_n)_n$ in $\DD$ with $h^{-1}(e^{-ct_1}h(z_n) ) \to e^{i\theta} \in \TT$.

Since, by Lemma \ref{lem:ombound}, $\Omega$ is bounded, there is a subsequence of $(h(z_n))_n$
converging to a point $\xi_1 \in \overline\Omega$. Moreover $\xi_1$ lies in $\partial\Omega$,
as otherwise $e^{-ct_1}\xi_1 \in \Omega$, so $h^{-1}(e^{-ct_1}\xi_1) \in \DD$, which
is a contradiction. We also have $e^{-ct_1}\xi_1 \in \partial\Omega$.

It follows that the arc $\{e^{-ct}\xi_1: 0 \le t \le t_1\}$ is contained in $\partial\Omega$, 
and since $\partial\Omega$ is locally connected, the mapping $h$ extends continuously to $\overline\DD$,
and maps $\TT$ onto $\partial\Omega$ \cite[Thm. 2.1, p. 20]{pom}.
Thus $h^{-1}\{e^{-ct}\xi_1: 0 \le t \le t_1/2\}$ is a subset of $\TT$ of positive measure, on which
$|\phi_{t_1/2}(z)|=1$. Thus $C_{\phi_{t_1/2}}$ is not compact by Proposition
\ref{prop:notimcomp},
which is a contradiction.
\endpf

We shall use similar methods to study the compactness of analytic semigroups, which is the
subject of the next subsection.

\subsection{Compact analytic semigroups}

In the particular case of analytic semigroups, the compactness is equivalent to the compactness of the resolvent,
by Theorem \ref{thm:cpresolv}, since the analyticity implies the uniform continuity \cite[p. 109]{EN}. 

\begin{rem}\label{rem:pisier}{\rm
For an analytic semigroup $(T(t))_{t\geq 0}$, being eventually compact is equivalent to be immediately compact.  Indeed, consider $Q$ the quotient map from the linear and bounded operators on a Hilbert space $\LH$, onto the Calkin algebra (the quotient of $\LH$ by the compact operators). Then $(QT(t))_{t\geq 0}$ is an analytic semigroup which vanishes for $t>0$ large enough, and therefore vanishes identically \
(see \cite{pisier}, where this observation is attributed to W.~Arendt).   }
\end{rem}

Before stating the complete characterization of compact and analytic semigroups of composition operators on $H^2(\DD)$ in terms of properties of its generator, we need the following key lemma which appears in the proof of Theorem~6.1 of \cite{siskakis}.
\begin{lem}\label{lem:sisk}
	Let $(\psi_t)_{t\geq 0}$ be a semiflow of analytic functions from $\DD$ to $\DD$, with common Denjoy--Wolff fixed point $0$, and denote by $G$ its infinitesimal generator.  Then the resolvent operator of the semigroup of composition operators 
	$(C_{\psi_t})_{t\geq 0}$ on $H^2(\DD)$ is compact if and only if 
	\[\forall \xi\in \TT, \lim_{z\to\xi,z\in\DD} \left|  \frac{G(z)}{z-\xi}  \right| =\infty.  \]
\end{lem}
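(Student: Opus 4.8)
The plan is to characterize compactness of the resolvent $R(\lambda, A)$ of the generator of $(C_{\psi_t})$ by relating it to the growth of the symbol $G$ near the boundary. First I would recall that since $0$ is the common Denjoy--Wolff point, the resolvent admits an integral representation in terms of the semigroup, but it is more effective here to use the known fact that $R(\lambda, A)$ is, up to elementary operators, a weighted composition operator built from $G$; indeed from $(A-\lambda)f = Gf' - \lambda f = g$ one solves the linear ODE explicitly to get
\[
  f(z) = -\frac{1}{G(z)}\exp\!\left(\int_{z_0}^{z}\frac{\lambda}{G(u)}\,du\right)\int_{z_0}^{z}g(u)\exp\!\left(-\int_{z_0}^{u}\frac{\lambda}{G(s)}\,ds\right)\frac{du}{G(u)},
\]
so the behaviour of $R(\lambda,A)$ is governed by the factor $1/G$ near $\TT$. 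The crucial reduction, following Siskakis, is that $R(\lambda,A)$ is compact on $H^2(\DD)$ if and only if the associated multiplication-type operator with symbol controlled by $1/|G|$ is compact, which by standard Carleson-measure / reproducing-kernel criteria for compactness on $H^2(\DD)$ holds precisely when the relevant quantity vanishes at the boundary.

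Next I would translate this boundary condition into the stated form. The point is that compactness of $R(\lambda,A)$ is equivalent to $\langle R(\lambda,A)k_w, k_w\rangle / \|k_w\|^2 \to 0$ as $|w|\to 1$, where $k_w$ is the Szeg\H{o} kernel; evaluating this inner product using the explicit solution above shows it behaves like $(1-|w|^2)/|G(w)|$ up to bounded factors coming from the exponential terms (which are bounded away from $0$ and $\infty$ locally, since $\RE(\bar z G(z))\le 0$ forces $\RE\int \lambda/G$ to be controlled). Hence compactness is equivalent to
\[
  \lim_{|w|\to 1}\frac{1-|w|^2}{|G(w)|}=0,
\]
and one checks this is the same as demanding $|G(z)/(z-\xi)|\to\infty$ as $z\to\xi$ for every $\xi\in\TT$: near a boundary point $\xi$ one has $1-|w|^2 \asymp |w-\xi|$ along sequences approaching $\xi$ nontangentially, and a short argument (using that $G$ has nontangential limits a.e.\ and the harmonic-majorant structure of $\RE(\bar z G)$) upgrades nontangential control to full unrestricted limits. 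I would present the two implications separately: if $|G(z)/(z-\xi)|\to\infty$ for all $\xi$, deduce $(1-|w|^2)/|G(w)|\to 0$ and hence resolvent compactness; conversely, if compactness holds then $(1-|w|^2)/|G(w)|\to 0$, and failure of the limit condition at some $\xi$ would produce a sequence $z_n\to\xi$ with $|G(z_n)/(z_n-\xi)|$ bounded, contradicting this.

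The main obstacle I anticipate is the passage from nontangential to unrestricted boundary behaviour, and more precisely controlling the exponential factors $\exp(\pm\int \lambda/G)$ uniformly near $\TT$: a priori $1/G$ need not be integrable along paths to the boundary, so one must exploit the sign condition $\RE(\bar z G(z))\le 0$ (equivalently $G(z)=(\alpha-z)(1-\bar\alpha z)F(z)$ with $\RE F\ge 0$, from the semiflow structure recalled before Corollary~\ref{cor:analytic}) to bound $\RE\int_{z_0}^{z}\lambda/G(u)\,du$ from above, so that the exponential weight does not destroy the compactness criterion. A second, more technical point is justifying the reproducing-kernel test for compactness in this weighted-operator setting and verifying that it is both necessary and sufficient; for this I would lean on the fact, recalled in this section, that for analytic semigroups compactness of the semigroup is equivalent to compactness of the resolvent, so it suffices to argue at the level of $R(\lambda,A)$, where the estimates above apply directly. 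Once these uniform bounds on the exponential factors are in hand, the equivalence with the displayed limit condition is a routine comparison.
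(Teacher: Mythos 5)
First, a point of comparison: the paper does not prove this lemma at all --- it is imported verbatim from the proof of Theorem~6.1 of \cite{siskakis} --- so there is no internal proof to measure your argument against; the relevant benchmark is Siskakis's own argument, which your plan roughly shadows (solve the first-order ODE $(\lambda-A)f=g$ explicitly, obtain an integral-operator representation of $R(\lambda,A)$ built from the Koenigs function, then translate compactness into a boundary-growth condition on $G$).

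As a proof, however, your proposal has genuine gaps at exactly the two places where the content lies. First, the compactness test you invoke --- that $\langle R(\lambda,A)k_w,k_w\rangle/\|k_w\|^2\to 0$ as $|w|\to 1$ implies compactness --- is false for general bounded operators on $H^2(\DD)$: the Berezin transform of a non-compact operator can vanish at the boundary (e.g.\ the orthogonal projection onto $\overline{\rm span}\{z^{2^k}\}$), so the reproducing-kernel criterion must be \emph{proved} for the specific integral operator at hand, and your proposed patch (compactness of an analytic semigroup equals compactness of its resolvent) is both irrelevant to that issue and out of scope, since the lemma assumes only a semiflow, not analyticity. Relatedly, your claim that the exponential factors $\exp(\pm\int\lambda/G)$ are bounded away from $0$ and $\infty$ near $\TT$ is not true in general: they are powers of the Koenigs map $h$, and $h(\DD)$ may be unbounded; the actual argument must exploit the cancellation between the inner and outer factors along the path of integration. (Your resolvent formula also carries a spurious $1/G(z)$ prefactor.) Second, the equivalence between $\lim_{|w|\to1}(1-|w|^2)/|G(w)|=0$ and the unrestricted condition $\lim_{z\to\xi}|G(z)/(z-\xi)|=\infty$ for every $\xi\in\TT$ is not a ``routine comparison'': since $|z-\xi|$ can be far larger than $1-|z|^2$ under tangential approach, passing from the first condition to the second genuinely requires the structure $G(z)=-zF(z)$ with $\re F\ge0$ and Harnack/Julia--Wolff--Carath\'eodory-type estimates as in \cite{CDP}; this is precisely the step you defer with ``a short argument.'' Until these two steps are carried out, what you have is a plausible plan rather than a proof.
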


\begin{thm}\label{th:compact}
 Let $G: \DD \to \CC$ be a holomorphic function such that the operator  $A$ 
defined by $Af(z)=G(z)f'(z)$ with dense domain $D(A)\subset H^2(\DD)$  
generates an analytic semigroup $(T(t))_{t\geq 0}$ of composition operators.  Then the following assertions are equivalent:
	\begin{enumerate}
		\item $(T(t))_{t\geq 0}$ is immediately compact;
		\item $(T(t))_{t\geq 0}$ is eventually compact;
		\item 
		$\forall \xi\in\TT$, $\lim_{z\in\DD,z\to\xi }\left| \frac{G(z)}{z-\xi}\right|=\infty$. 
	\end{enumerate} 
\end{thm}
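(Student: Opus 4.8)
The plan is to prove the cycle of implications $1 \Rightarrow 2 \Rightarrow 3 \Rightarrow 1$, with the main work being the equivalence of the analytic (hence norm-continuous) semigroup's compactness with the resolvent criterion of Lemma~\ref{lem:sisk}. Since the semigroup is analytic, Remark~\ref{rem:pisier} gives $2 \Rightarrow 1$ for free, and $1 \Rightarrow 2$ is trivial, so the real content is relating eventual/immediate compactness to the boundary growth of $G$.

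First I would observe that the analyticity of $(T(t))_{t\ge0}$ forces norm-continuity (indeed real-analyticity in $t>0$, by \cite[p.~109]{EN}), so Theorem~\ref{thm:cpresolv} applies and tells us that $(T(t))_{t\ge0}$ is immediately compact if and only if the resolvent $R(\lambda,A)$ is compact for some $\lambda \notin \sigma(A)$. This reduces the equivalence of 1 and 3 to an equivalence between compactness of $R(\lambda,A)$ and condition 3 --- but condition 3 is precisely the criterion of Lemma~\ref{lem:sisk}, provided we have first normalised so that the Denjoy--Wolff point of the semiflow is $0$. So a preliminary step is to pass to that normalisation: conjugate by the automorphism $b_\alpha(z)=(\alpha-z)/(1-\overline\alpha z)$ sending the Denjoy--Wolff point $\alpha$ to $0$. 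Here I would need to check two things: that the Denjoy--Wolff point $\alpha$ lies in $\DD$ (this follows from Remark~\ref{rem:dw}, since compactness of any single $C_{\phi_t}$, $t>0$, which holds in case 1, forces the fixed point into the open disc --- and in case 3 one argues instead that if $\alpha \in \TT$ then $G$ has a finite angular derivative at $\alpha$, contradicting the blow-up in condition 3 as $z\to\alpha$); and that both compactness of the resolvent and condition 3 are invariant under this conjugation. The resolvent invariance is clear because $C_{b_\alpha}$ is a bounded invertible operator (similarity of semigroups preserves compactness of resolvents). For condition 3 one computes that if $\psi_t = b_\alpha \circ \phi_t \circ b_\alpha^{-1}$ has generator $\widetilde G$, then $\widetilde G = (b_\alpha' \circ b_\alpha^{-1}) \cdot (G \circ b_\alpha^{-1})$, and since $b_\alpha$ is a M\"obius self-map of $\DD$ extending to a homeomorphism of $\overline\DD$ with nonvanishing derivative on $\TT$, the condition $\lim_{z\to\xi}|G(z)/(z-\xi)| = \infty$ for all $\xi\in\TT$ is equivalent to the same condition for $\widetilde G$ (the chain-rule factor $b_\alpha'$ is bounded above and below near the boundary, and $b_\alpha^{-1}(z)-b_\alpha^{-1}(\xi)$ is comparable to $z-\xi$ near $\xi$).

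Once the normalisation is in place, the equivalence $1 \Leftrightarrow 3$ is immediate from Theorem~\ref{thm:cpresolv} combined with Lemma~\ref{lem:sisk}: immediate compactness $\Leftrightarrow$ compact resolvent (using norm-continuity, which is automatic here) $\Leftrightarrow$ condition 3 on the normalised generator $\Leftrightarrow$ condition 3 on $G$. Then $2 \Leftrightarrow 1$: the direction $1 \Rightarrow 2$ is trivial, and $2 \Rightarrow 1$ is exactly the Arendt observation recorded in Remark~\ref{rem:pisier} --- composing with the Calkin quotient map $Q$, $(QT(t))_{t\ge0}$ is an analytic semigroup vanishing for large $t$, hence identically zero, so every $T(t)$, $t>0$, is compact.

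The main obstacle I anticipate is the bookkeeping around the conjugation by $b_\alpha$: one must be careful that Lemma~\ref{lem:sisk} is stated only for semiflows with Denjoy--Wolff point $0$, so the argument genuinely requires transporting condition 3 through the M\"obius change of variables and verifying it is preserved; and one must separately justify, in the hypothesis-3 direction, that condition 3 itself rules out a boundary Denjoy--Wolff point (so that the conjugation is legitimate and the equivalence is not vacuous). A secondary point to handle cleanly is making explicit that ``analytic $\Rightarrow$ norm-continuous on $(0,\infty)$'' so that clause (ii) of Theorem~\ref{thm:cpresolv} is satisfied automatically and plays no further role. None of these steps is deep, but they are where the proof could go wrong if rushed.
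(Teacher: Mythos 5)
Your proposal is correct and follows essentially the same route as the paper: equivalence of 1 and 2 via the Calkin-algebra/analyticity observation of Remark~\ref{rem:pisier}, and equivalence of 1 and 3 by reducing immediate compactness to compactness of the resolvent (Theorem~\ref{thm:cpresolv} plus norm-continuity from analyticity), conjugating by $b_\alpha$ to normalise the Denjoy--Wolff point to $0$, checking that condition 3 is preserved under this M\"obius change of variables, and invoking Lemma~\ref{lem:sisk}. The one place the paper is slightly more specific is in the $3\Rightarrow 1$ direction, where it cites a result of Contreras--D\'\i az Madrigal--Pommerenke to rule out a boundary Denjoy--Wolff point (the angular limit of $G(z)/(z-\tau)$ would then be zero, contradicting condition 3), which is exactly the argument you sketch.
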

\beginpf
The equivalence between 1. and 2. is given in Remark \ref{rem:pisier}. \\

We now show that 1. implies 3.
Suppose that $T(t)=C_{\varphi_t}$, where all $\varphi_t$ have a common Denjoy--Wolff fixed point $\alpha\in \overline{\DD}$.    
Assume from now that $(T(t))_{t\geq 0}$ is immediately compact. 
As in Remark~\ref{rem:dw}, it follows that $\alpha\in\DD$. In order to use Lemma~\ref{lem:sisk} we will consider another semigroup with Denjoy--Wolff point $0$. To that aim, consider the automorphism $b_\alpha(z):=\frac{\alpha-z}{1-\overline{\alpha}z}$ and $\psi_t(z):=b_\alpha\circ \varphi_t\circ b_\alpha$. Since $C_{b_\alpha}$ is invertible (equal to its inverse), and since $C_{\psi_t}=C_{b_\alpha}C_{\varphi_t}C_{b_\alpha}$, it is clear that $(T(t))_{t\geq 0}$ is immediately compact if and only if $(C_{\psi_t})_{t\geq 0}$ is immediately compact. 

Denote by $\widetilde{G}$ (resp. $G$) the generator of  the semiflow $({\psi_t})_{t\geq 0}$  (resp. $({\varphi_t})_{t\geq 0}$).  By \cite{BP}, $G(\alpha)=0$. Moreover, since  $b_\alpha\circ \psi_t=\varphi_t\circ b_\alpha$, we get
\[ \frac{|\alpha|^2-1}{(1-\overline{\alpha}\psi_t(z))^2} \frac{\partial \psi_t}{\partial t} (z)=  \frac{\partial \varphi_t}{\partial t}(b_\alpha (z)) . \]
Taking the limit as $t$ tends to $0$, we get:
\[ \frac{|\alpha|^2-1}{(1-\overline{\alpha} z)^2} \widetilde{G}(z)=  G(b_\alpha (z)), \]
and thus 
\[ G(z)=  \frac{|\alpha|^2-1}{(1-\overline{\alpha}b_\alpha (z))^2}\widetilde{G}(b_\alpha(z)).\]
It follows that
\[  \frac{(1-|\alpha|^2)}{4}|\widetilde{G}(b_\alpha(z))|\leq |G(z)|\leq \frac{(1+|\alpha|)}{(1-|\alpha|)}|\widetilde{G}(b_\alpha (z))|, \]
and then 
\[ \lim_{z\to\xi}\left|  \frac{G(z)}{z-\xi}    \right|=\infty \quad \Longleftrightarrow \quad
  \lim_{z\to\xi}\left|  \frac{\widetilde{G}(b_\alpha(z))}{z-\xi}    \right|=\infty. \]
Note that 
\[b_\alpha (z)-b_\alpha (\xi)=(z-\xi)\frac{|\alpha|^2-1}{(1-\overline{\alpha}z)(1-\overline{\alpha}\xi)},\]
with 
\[ \frac{1-|\alpha|}{1+|\alpha|}\leq \left|   \frac{(1-\overline{\alpha}z)(1-\overline{\alpha}\xi)}{|\alpha|^2-1} \right|  \leq \frac{4}{1-|\alpha|^2}.\]
Therefore we get 
\[ \lim_{z\to\xi}\left|  \frac{G(z)}{z-\xi}    \right|=\infty \quad \Longleftrightarrow \quad
   \lim_{z\to\xi}\left|  \frac{\widetilde{G}(b_\alpha(z))}{b_\alpha(z)-b_\alpha(\xi)}    \right|=\infty. \]
Using Lemma~\ref{lem:sisk} and  since $b_\alpha(\xi)\in \TT$, it follows that 1. implies 3..\\

For the implication 3. implies 1., 
we see from \cite[Thm. 1]{CDP}, that the Denjoy--Wolff point of the semigroup must belong to the unit disc, 
as otherwise there is $\tau\in \partial \mathbb D$ such that the angular limit of $\frac{G(z)}{z-\tau}$, as $z \to \tau$, is zero.
The conclusion now follows along the same lines as the previous implication.  

\endpf

Using the semiflow model, we have the following result.

\begin{thm}
Let $(C_{\phi_t})_{t \ge 0}$  be an immediately compact analytic semigroup on $H^2(\DD)$ or $\D$.
Then the following conditions are equivalent:\\
1. There exists a $t_0 >0$ such that $\|\phi_{t_0}\|_\infty < 1$;\\
2. For all $t>0$ one has $\|\phi_t\|_\infty < 1$.\\
Therefore, if  there exists a $t_0 >0$ such that $\|\phi_{t_0}\|_\infty < 1$, then
$(C_{\phi_t})_{t \ge 0}$
is immediately trace-class.
\end{thm}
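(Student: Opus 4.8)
The statement is Theorem~\ref{thm:appmodel} with the hypothesis ``$\partial\Omega$ locally connected'' removed and analyticity added; accordingly, the plan is to show that an analytic immediately compact semigroup of composition operators automatically has $\partial\Omega$ locally connected in its semiflow model, after which Theorem~\ref{thm:appmodel} gives the conclusion, including the immediate trace-class property. As there, only $1.\Rightarrow 2.$ requires an argument, and as there I would first conjugate the semiflow by $b_\alpha$ to move the Denjoy--Wolff point to $0$: this replaces $(C_{\phi_t})$ by $(C_{b_\alpha}C_{\phi_t}C_{b_\alpha})$, which is again analytic and immediately compact; since $b_\alpha$ is an automorphism, the conjugation also preserves $\|\phi_t\|_\infty<1$ and membership in the trace class. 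So I may assume the model $\phi_t(z)=h^{-1}(e^{-ct}h(z))$ with $h:\DD\to\Omega$ conformal, $h(0)=0$, and, by Lemma~\ref{lem:ombound} together with hypothesis~1., $\re c>0$ and $\Omega$ bounded. Differentiating the model at $t=0$ gives $G=-c\,h/h'$, hence $(\log(h(z)/z))'=h'(z)/h(z)-1/z=-c/G(z)-1/z$, a holomorphic function on $\DD$; and by the Berkson--Porta representation \cite{BP}, $G(z)=-z\,p(z)$ with $p:\DD\to\overline{\CC_+}$ holomorphic and $c=p(0)$, so this derivative equals $(c-p(z))/(z\,p(z))$.

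The heart of the matter is to show that this derivative lies in $H^1(\DD)$. Since the semigroup is analytic, Corollary~\ref{cor:analytic} supplies $\theta\in(0,\frac\pi2)$ with $\re(e^{i\alpha}\overline z G(z))\le 0$ for a.e.\ $z\in\TT$ and all $\alpha\in\{-\theta,0,\theta\}$; as $\overline z G(z)=-p(z)$ on $\TT$, the boundary values of $p$ lie a.e.\ in the sector $\Sigma_{\frac\pi2-\theta}$, and since $\IM\log p=\arg p$ is a bounded harmonic function on $\DD$ this forces $p(\DD)\subset\Sigma_{\frac\pi2-\theta}$ (if $p$ is constant then $\phi_t(z)=e^{-ct}z$, immediate compactness forces $\re c>0$, and 2.\ is clear). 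Hence $1/p$ also maps $\DD$ into a sector of half-angle $<\frac\pi2$; raising such a map to a suitable power $>1$ carries it into $\CC_+$, and functions into $\CC_+$ belong to $H^r(\DD)$ for every $r<1$, so $1/p\in H^r(\DD)$ for some $r>1$, in particular $1/p\in H^1(\DD)$. A short estimate on the circles of radius $r\to 1$---in which the relation $c=p(0)$ is precisely what cancels the apparent pole at the origin---then yields $(c-p)/(zp)\in H^1(\DD)$.

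Once this is known, a holomorphic function on $\DD$ whose derivative lies in $H^1(\DD)$ extends continuously to $\overline\DD$; hence $\log(h/z)$, and therefore $h(z)=z\exp(\log(h(z)/z))$, extends continuously to $\overline\DD$, so by Carath\'eodory's theorem \cite[Thm.~2.1, p.~20]{pom} the boundary $\partial\Omega$ is locally connected. Theorem~\ref{thm:appmodel} then applies and yields $1.\Rightarrow 2.$ together with the immediate trace-class conclusion. I expect the main obstacle to be the middle step: extracting the quantitative sector bound $p(\DD)\subset\Sigma_{\frac\pi2-\theta}$ from analyticity, and then the Hardy-space bookkeeping that converts it into $(c-p)/(zp)\in H^1(\DD)$ (including the degenerate cases and the behaviour near the origin). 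The remainder is either the routine conjugation reduction or a direct appeal to Theorem~\ref{thm:appmodel}.
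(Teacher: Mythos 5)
Your proposal is correct, but it takes a genuinely different route from the paper's. The paper does not try to remove the local-connectivity hypothesis of Theorem~\ref{thm:appmodel} by proving boundary regularity; it exploits the two-dimensionality of complex time directly: by Corollary~\ref{cor:dir} and uniqueness of analytic continuation, the model $\phi_u(z)=h^{-1}(e^{-cu}h(z))$ remains valid for all $u$ in the sector $\Sigma_\alpha$, so if $\|\phi_{t_1}\|_\infty=1$ for some $t_1>0$, the argument of Theorem~\ref{thm:appmodel} produces a point $\xi_1\in\partial\Omega$ with $e^{-cu}\xi_1\in\partial\Omega$ for all $u$ in an open subset of the sector; since $u\mapsto e^{-cu}\xi_1$ is an open map (as $\re c>0$ and $\xi_1\ne 0$), the boundary $\partial\Omega$ would then contain a nonempty open set, which is absurd because every neighbourhood of a boundary point meets $\Omega$. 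That argument is a few lines long and needs no information about $h$ on $\TT$. Your argument instead establishes the stronger intermediate fact that for an analytic immediately compact semigroup the Riemann map $h$ extends continuously to $\overline{\DD}$ (equivalently, $\partial\Omega$ is locally connected): you use the sector bound on the Berkson--Porta function $p(z)=-G(z)/z$ supplied by Corollary~\ref{cor:analytic} together with the bounded harmonic function $\arg p$ to get $p(\DD)\subset\Sigma_{\pi/2-\theta}$, deduce $1/p\in H^q$ for some $q>1$, use the identity $(\log(h(z)/z))'=(c-p(z))/(zp(z))$ with $c=p(0)$ cancelling the apparent pole to conclude this derivative lies in $H^1(\DD)$, and invoke the classical theorem that $f'\in H^1$ forces continuity of $f$ on $\overline{\DD}$; Theorem~\ref{thm:appmodel} then finishes, including the trace-class conclusion. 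All the steps check out (the degenerate case of constant $p$, the nonvanishing of $p$ needed to define $\log p$, and the conjugation by $b_\alpha$ preserving analyticity, compactness, the condition $\|\phi_t\|_\infty<1$ and the trace class are all handled or routine). Your proof is longer and leans on more Hardy-space machinery, but the boundary-regularity statement it yields along the way is of independent interest and is not obtainable from the paper's shorter argument.
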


\beginpf
Note that in the semiflow model, the semigroup is represented by
\eqref{eq:semiflowmodel} for all $t \in \Sigma_\alpha$: this is the correct
extension, by Corollary \ref{cor:dir} and the isolated zeroes result for analytic functions.

Following the proof of Theorem \ref{thm:appmodel}, we take a $t_1>0$ 
such that $\|\phi_{t_1}\|_\infty=1$. 
This implies that 
there exists $\xi_1 \in \partial\Omega$ such that
$e^{-cu}\xi_1 \in \partial\Omega$ for all 
$u$ in the triangle $\Sigma_\alpha \cap \{z \in \CC: \re z<t_1\}$ (cf. Lemma \ref{lem:ombound}).
This is a contradiction since a point in $\partial\Omega$ cannot have a neighbourhood consisting
of points of $\partial\Omega$.
\endpf

\subsection{Examples}
In Remark \ref{rem:pisier} we saw that whenever a semigroup is analytic,   immediate compactness is equivalent to   eventual compactness.
This is not true in general, and here is an explicit example 
showing this, based on an idea in \cite[Sec. 3]{siskakis}. 

\begin{ex}
Let $h$ be the Riemann map from $\DD$ onto the starlike region 
\[\Omega:=\DD\cup \{z\in\CC:0<\re (z) <2\mbox{ and }0<\Im(z)<1\},\]
with $h(0)=0$. Since $\partial\Omega$ is a Jordan curve, 
the Carath\'eodory theorem \cite[Thm 2.6, p. 24]{pom}
implies that $h$ extends continuously to $\partial\DD$.

Let $\phi_t(z)=h^{-1}(e^{-t}h(z))$.  Note that for $0<t<\log 2$, $\phi_t(\TT)$ intersects $\TT$ on a  set of positive measure, and thus, $C_{\phi_t}$ is not compact
by Proposition \ref{prop:notimcomp}.
Moreover, for $t>\log 2$, $\|\phi_t\|_\infty<1$, and therefore $C_{\phi_t}$ is compact (actually trace-class). Figure 1 represents the image of $\varphi_t$  for different values of $t$.  

\begin{center}
\includegraphics[width=16cm, trim = 15mm 200mm 0 0, clip  ]{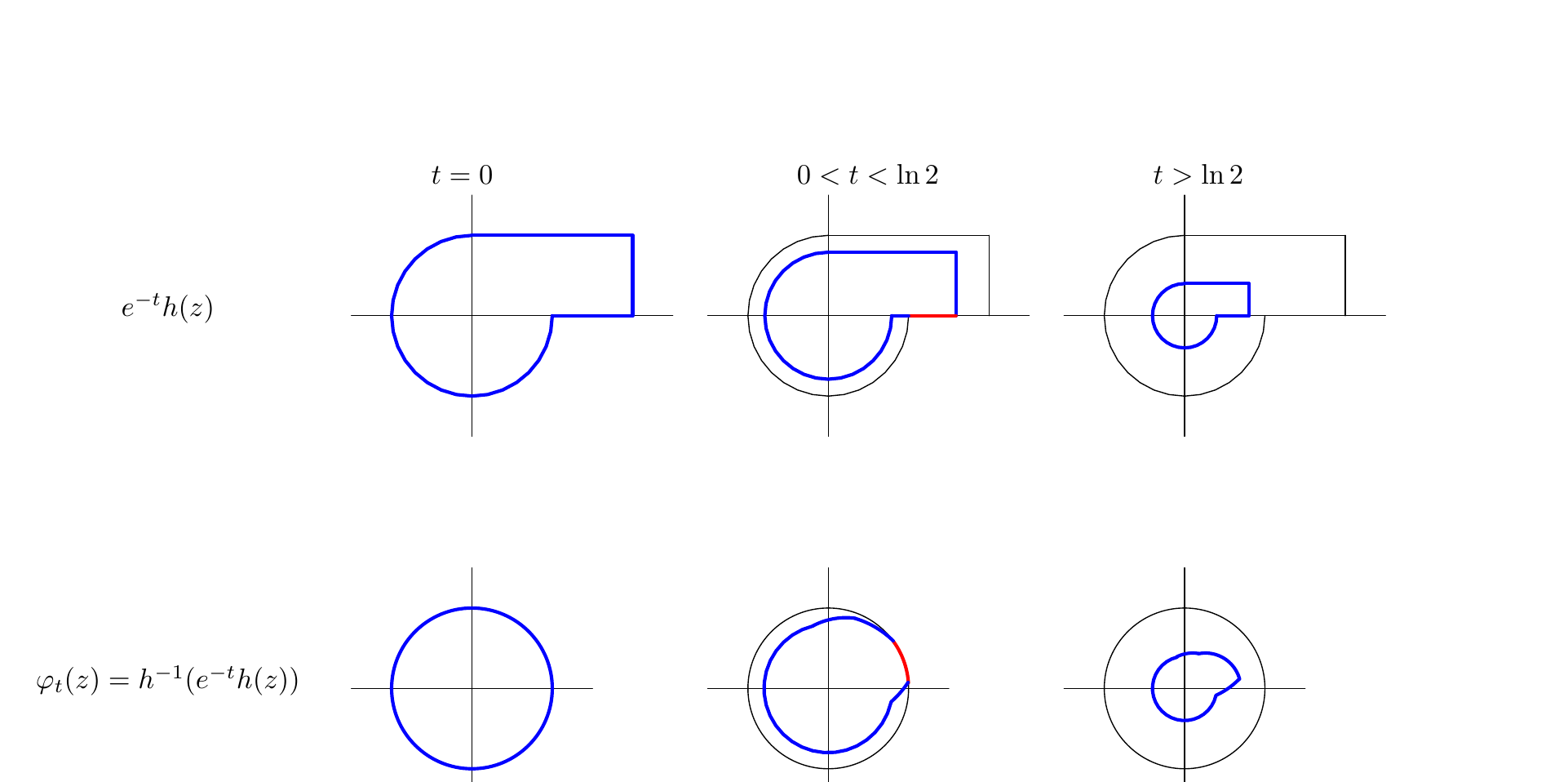}
Figure 1: an example of eventual but not immediate compactness
\end{center}
\end{ex}

It is of interest to consider the relation between immediate compactness and analyticity for
a $C_0$-semigroup of composition operators: this is because compactness of a
semigroup $(T(t))_{t \ge 0}$ is implied by compactness of the resolvent together with norm-continuity
at all points $t>0$, as in Theorem \ref{thm:cpresolv}.
\begin{ex}
Consider
\[
G(z)= \frac{2z}{z-1},
\]
Now the image of the unit circle under $\overline z G(z)$ is the line $\{z \in \CC: \re z=-1\}$, and
so the operator $A: f \mapsto Gf'$ generates a non-analytic $C_0$-semigroup 
of composition operators $(C_{\phi_t})_{t \ge 0}$ on $H^2(\DD)$.
On the other hand, it can be shown that 
$C_{\phi_t}$ is compact -- even trace-class -- for each $t>0$. For
we have the equation
\[
\phi_t(z)e^{-\phi_t(z)}=e^{-2t}ze^{-z}.
\]
Now the function $z \mapsto ze^{-z}$ is injective on $\overline\DD$; this follows from
the argument principle,  for the image of $\TT$ is easily seen to be a simple Jordan curve.
It follows that
$\|\phi_t\|_\infty<1$ for all $t>0$, and so $C_{\phi_t}$ is trace-class.
\end{ex}

\begin{ex}
The semigroup corresponding to $G(z)=(1-z)^2$ is analytic but not immediately compact.
For 
\[
\phi_t(z)=\frac{(1-t)z+t}{-tz+1+t}
\]
(note that the formula given in \cite{siskakis} contains a misprint);
the Denjoy--Wolff point is $1$, so the semigroup cannot be immediately compact.

The analyticity follows on calculating $\overline z G(z)$ for $z=e^{i\theta}$.
We obtain $-4 \sin^2(\theta/2)$, which gives the result by Corollary~\ref{cor:analytic}.
\end{ex}

\begin{ex}
Let $\varphi$ be the Riemann map from $\DD$ onto the semi-disc defined by $\{z\in\CC: \Im(z)>0, |z-1/2|<1/2 \}$ which fixes $1$. Lotto \cite{lotto} proved that $C_\phi$ is compact but not Hilbert--Schmidt. Moreover, Lotto gave an explicit formula 
for $\phi$, namely: 
\[ \phi(z)=\frac{1}{1-ig(z)}, \quad \mbox{where} \quad g(z)=\sqrt{ i\frac{1-z}{1+z}}.\]
Since  $(z^n)_n$ is an orthonormal basis of $H^2(\DD)$, it follows
that a composition operator $C_\psi$ on $H^2(\DD)$ is Hilbert--Schmidt if and only if $\displaystyle \int_0^{2\pi} \frac{1}{1-|\psi(e^{it})|^2}dt<\infty$ (see \cite{shapiro}).
It is then possible to check that $C_{\phi\circ\phi}$ is Hilbert-Schmidt, providing an example of a {\em discrete} immediately compact semigroup that is not immediately Hilbert--Schmidt, but is eventually Hilbert--Schmidt. 
\end{ex}

\section{Composition semigroups on the half-plane}
\label{sec:4}

\subsection{Quasicontractive $C_0$-semigroups}

Let $\CC_+$ denote the right half-plane in $\CC$. Berkson and Porta \cite{BP} gave the following criterion for
an analytic function $G$ to generate a one-parameter semigroup of analytic mappings from $\CC_+$ into itself,
namely, solutions to the initial value problem
\beq\label{eq:ivp}
\frac{\partial \phi_t(z)}{\partial t}= G(\phi_t(z)), \qquad \phi_0(z)=z,
\eeq
namely the condition
\beq\label{eq:xdudx}
x \frac{d (\re G)}{\partial x} \le \re G \quad \hbox{on } \CC_+,
\eeq
where as usual $x=\re z$.
Note that this does not automatically yield a $C_0$-semigroup of bounded composition operators,
since not all composition operators $C_\phi$ are bounded on $H^2(\CC_+)$.
In fact the norm of such a composition operator is finite if and only if
the non-tangential limit  $\angle \lim_{z \to \infty}\phi(z)/z$ exists   and is non-zero; we denote this by $\phi'(\infty)$
(it is positive), and
in this case $\|C_\phi\|=\phi'(\infty)^{-1/2}$. See \cite{EJ} for more details.

In fact, from the proof of \cite[Thm. 2.13]{BP} one sees that if 
the operator $A$ gven by $Af=Gf'$ generates a semigroup and
\eqref{eq:xdudx} is satisfied, then the semigroup consists of composition operators on $H^2(\CC_+)$. Moreover
Arvanitidis \cite{Arv} used the results of \cite{CDP} to
show that a necessary and sufficient condition for these composition operators to be bounded is
that the non-tangential limit  $\angle \lim_{z \to \infty} G(z)/z$   exists: if it has the value $\delta$, then
$\|C_{\phi_t}\|=e^{-\delta t/2}$, and so the semigroup is quasicontractive. We may summarize the results above as
follows.

\begin{thm}\label{thm:dec21}
For an operator $A$ given by $Af=Gf'$  on $D(A) \subset H^2(\CC_+)$, the following are equivalent.\\
(i)~$A$ generates a quasi-contractive $C_0$-semigroup of bounded composition operators on $H^2(\CC_+)$;\\
(ii)~Condition \eqref{eq:xdudx} holds and $\angle \lim_{z \to \infty} G(z)/z$ exists.
\end{thm}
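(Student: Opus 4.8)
The plan is to treat this as an assembly of the facts recalled just before the statement, proving the two implications separately.

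\textbf{(ii) $\Rightarrow$ (i).} Assume \eqref{eq:xdudx} holds and $\delta := \angle \lim_{z\to\infty} G(z)/z$ exists. First I would invoke the Berkson--Porta theorem \cite[Thm. 2.13]{BP}: under \eqref{eq:xdudx} the function $G$ generates a jointly continuous semiflow $(\phi_t)_{t\ge 0}$ of analytic self-maps of $\CC_+$ solving \eqref{eq:ivp}, and, as extracted from the proof of that theorem, the induced maps $C_{\phi_t}f = f\circ\phi_t$ satisfy the algebraic semigroup law on $H^2(\CC_+)$. Next I would bring in Arvanitidis's boundedness criterion \cite{Arv} (which rests on \cite{CDP}): from the semigroup law the dilation at infinity $\phi_t'(\infty)=\angle \lim_{z\to\infty}\phi_t(z)/z$ is multiplicative in $t$, and differentiating \eqref{eq:ivp} at $t=0$ identifies it with $e^{\delta t}$, so each $\phi_t'(\infty)$ exists and is nonzero; hence every $C_{\phi_t}$ is bounded with $\|C_{\phi_t}\|=\phi_t'(\infty)^{-1/2}=e^{-\delta t/2}$ by the norm formula of \cite{EJ}, and in particular the family is quasi-contractive. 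For strong continuity I would combine the joint continuity of $(t,z)\mapsto\phi_t(z)$ with the local uniform bound $\sup_{t\in[0,1]}\|C_{\phi_t}\|<\infty$: on a dense set of $f$ (for instance rational functions with poles off $\overline{\CC_+}$) one has $f\circ\phi_t\to f$ in $H^2(\CC_+)$ as $t\to 0^+$, and uniform boundedness propagates this to all of $H^2(\CC_+)$. Finally, differentiating $C_{\phi_t}f=f\circ\phi_t$ at $t=0$ on such $f$ gives $(C_{\phi_t}f-f)/t\to Gf'$; since these $f$ form a core, the generator of $(C_{\phi_t})$ is exactly $A$.

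\textbf{(i) $\Rightarrow$ (ii).} Conversely, if $A$ generates a quasi-contractive $C_0$-semigroup $(C_{\phi_t})$ of bounded composition operators, then $(\phi_t)$ is a semiflow of analytic self-maps of $\CC_+$ whose Berkson--Porta generator, recovered as $G(z)=\partial_t\phi_t(z)|_{t=0}$, is $G$; hence \eqref{eq:xdudx} holds by \cite[Thm. 2.13]{BP}. Since each $C_{\phi_t}$ is bounded, the necessity half of Arvanitidis's criterion \cite{Arv} yields that $\angle \lim_{z\to\infty} G(z)/z$ exists, which is precisely condition (ii).

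\emph{Main obstacle.} Since the deep content is already in \cite{BP}, \cite{CDP}, \cite{Arv} and \cite{EJ}, the delicate part is the bookkeeping in (ii) $\Rightarrow$ (i): one must verify that the $C_0$-semigroup produced from the semiflow has the prescribed operator $f\mapsto Gf'$ as its \emph{full} generator and not merely as a restriction to a core. As in the disc case handled through \cite[Thm. 3.9]{ACP}, this amounts to taking $D(A)$ to be the maximal domain $\{f\in H^2(\CC_+):Gf'\in H^2(\CC_+)\}$ (or at least dense), and I would make this convention explicit and check its consistency in both directions.
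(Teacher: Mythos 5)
Your proposal is correct and follows essentially the same route as the paper: the authors state this theorem explicitly as a summary of the preceding facts from Berkson--Porta \cite{BP}, Arvanitidis \cite{Arv} (via \cite{CDP}) and Elliott--Jury \cite{EJ}, giving no further proof, and your two implications assemble precisely those ingredients. The extra care you take over strong continuity and the identification of the full generator is a reasonable filling-in of details the paper leaves implicit.
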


A necessary condition for generation of a (quasi)contractive semigroup is given by the following result:

\begin{thm}
For an operator $A$ given by $Af=Gf'$  on $D(A) \subset H^2(\CC_+)$, if $A$ generates a  quasicontractive
$C_0$-semigroup, then 
\[
\inf_{w \in \CC_+} \frac{\re G(w) }{ \re w} > -\infty.
\]
 If the semigroup is  contractive, then $(\re G(w))/(\re w) \ge 0$ for $w \in \CC_+$.
\end{thm}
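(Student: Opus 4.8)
The plan is to derive the inequality from the semigroup norm estimate $\|T(t)\| \le M e^{\omega t}$ together with the fact that the composition operators act on reproducing kernels in a transparent way. Recall that $H^2(\CC_+)$ is a reproducing kernel Hilbert space: for $w \in \CC_+$ the kernel $k_w$ satisfies $\langle f, k_w \rangle = f(w)$, and $\|k_w\|^2 = k_w(w) = \frac{1}{2\pi}\cdot\frac{1}{2\re w}$ (up to a normalising constant that will cancel). The key algebraic identity is that for a bounded composition operator $C_\phi$ one has $C_\phi^* k_w = k_{\phi(w)}$. So first I would note that $T(t) = C_{\phi_t}$ for the semiflow $(\phi_t)$ generated by $G$ (this is part of the setup recalled just before Theorem~\ref{thm:dec21}, via \eqref{eq:ivp}).

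Next I would compute, for fixed $w \in \CC_+$,
\[
\|T(t)^* k_w\| = \|k_{\phi_t(w)}\| = c \left( \re \phi_t(w) \right)^{-1/2},
\]
where $c$ is the constant with $\|k_u\| = c(\re u)^{-1/2}$. Since $\|T(t)^*\| = \|T(t)\| \le M e^{\omega t}$, this gives
\[
\left( \re \phi_t(w) \right)^{-1/2} \le M e^{\omega t} \left( \re w \right)^{-1/2},
\]
that is, $\re \phi_t(w) \ge M^{-2} e^{-2\omega t} \re w$ for all $t \ge 0$. Now differentiate the function $t \mapsto \re \phi_t(w)$ at $t=0$: by \eqref{eq:ivp}, $\left.\frac{\partial}{\partial t}\right|_{t=0} \re\phi_t(w) = \re G(w)$, while the right-hand side $M^{-2} e^{-2\omega t}\re w$ has derivative $-2\omega \re w$ at $t=0$; since the two sides are equal at $t=0$ and the left exceeds the right for $t>0$, comparing derivatives yields $\re G(w) \ge -2\omega\, \re w$. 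Hence $\frac{\re G(w)}{\re w} \ge -2\omega > -\infty$ uniformly in $w$, which is the first claim. For the contractive case $\omega = 0$, the same inequality reads $\re G(w) \ge 0$ for all $w \in \CC_+$, giving the second claim.

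I should be a little careful that the differentiation step is legitimate: $t \mapsto \phi_t(w)$ is differentiable from the right at $0$ with derivative $G(w)$ by the initial value problem \eqref{eq:ivp} and the joint continuity/smoothness of the semiflow, so the one-sided comparison of difference quotients $\frac{\re\phi_t(w) - \re w}{t} \ge \frac{M^{-2}e^{-2\omega t} - 1}{t}\re w$ for $t>0$ and letting $t \to 0^+$ suffices; no full derivative is needed. Alternatively, to avoid even mentioning the semiflow one could argue directly: the inequality $\re\phi_t(w) \ge M^{-2}e^{-2\omega t}\re w$ together with $\phi_t(w) \to w$ as $t\to 0$ is all that is used.

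The main obstacle — really the only one — is getting the reproducing kernel normalisation and the adjoint formula $C_\phi^* k_w = k_{\phi(w)}$ stated cleanly for $H^2(\CC_+)$, and confirming that boundedness of the $C_{\phi_t}$ (guaranteed here since the semigroup is quasicontractive) is what makes $C_{\phi_t}^*$ a genuine bounded operator to which the norm bound applies. Everything after that is the elementary one-sided-derivative comparison above. One could equivalently phrase the whole argument in terms of the explicit norm formula $\|C_{\phi_t}\| = \phi_t'(\infty)^{-1/2}$ recalled before Theorem~\ref{thm:dec21}, but the reproducing-kernel route makes the appearance of $\re w$ in the denominator most transparent.
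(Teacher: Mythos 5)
Your argument is correct, but it is the ``integrated'' version of what the paper does at the infinitesimal level. The paper also works with the reproducing kernel $k_w(z)=\frac{1}{2\pi}\frac{1}{z+\overline w}$, but instead of applying $T(t)^*$ to $k_w$ and differentiating in $t$, it computes $\re\frac{\langle Ak_w,k_w\rangle}{\langle k_w,k_w\rangle}=-\frac{\re G(w)}{2\re w}$ directly (using $\langle Af,k_w\rangle = G(w)f'(w)$) and invokes the Lumer--Phillips theorem: quasicontractivity means $A-\omega I$ is dissipative, so this quantity is at most $\omega$, giving the same bound $\re G(w)/\re w\ge -2\omega$ that you obtain. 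The paper's route is shorter and, importantly, never needs to know that $T(t)$ is a composition operator $C_{\phi_t}$ for a semiflow $(\phi_t)$ solving \eqref{eq:ivp} --- the theorem's hypothesis is only that $A$ generates a quasicontractive semigroup, so your identification $T(t)=C_{\phi_t}$ and the adjoint formula $C_{\phi_t}^*k_w=k_{\phi_t(w)}$ are an extra input (harmless in the context of this section, but worth acknowledging). Conversely, your route sidesteps the question of whether $k_w$ actually lies in $D(A)$, which the paper's one-line computation quietly assumes. Two small corrections to your write-up: quasicontractive in this paper means precisely $M=1$, and your derivative comparison at $t=0$ genuinely requires this --- with $M>1$ the two sides of $\re\phi_t(w)\ge M^{-2}e^{-2\omega t}\re w$ are not equal at $t=0$ and no conclusion about the derivatives follows --- so you should set $M=1$ from the outset; and the derivative of $M^{-2}e^{-2\omega t}\re w$ at $t=0$ is $-2\omega M^{-2}\re w$, again forcing $M=1$ for the stated value. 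With $M=1$ your one-sided difference-quotient argument is sound and yields exactly the paper's conclusion in both the quasicontractive and contractive cases.
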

\beginpf
Let $k_w$ be the reproducing kernel for $H^2(\CC_+)$
given by
\[
k_w(z)= \frac{1}{2\pi} \frac{1}{z+\overline w} \qquad (z,w \in \CC_+)
\]
(cf. \cite[p. 8]{par04}).
Then
\[
 \re \frac{\langle Ak_w, k_w \rangle}{ \langle k_w,k_w \rangle}= -\frac{\re G(w) }{2 \re w},
\]
and the result follows immediately from the Lumer--Phillips theorem.
\endpf

For analytic semigroups, we have the following necessary condition.


\begin{prop}
Suppose that $A:f \to Gf'$ generates an analytic semigroup on $H^2(\CC_+)$. Write $G=u+iv$ where $u$ and $v$ are real functions
and similarly $z=x+iy$. Then
there is an $\alpha$ with $0< \alpha < \pi/2$ such that, for every fixed $y \in \RR$,
$(u \cos \theta + v \sin \theta)/x$ is a decreasing function of $x$ 
for all $\theta \in [-\alpha,\alpha]$.
\end{prop}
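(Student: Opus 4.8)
The plan is to rotate the sector of analyticity and apply the half-plane results of Section~\ref{sec:4} along each rotated ray. Since $A$ generates an analytic $C_0$-semigroup, there are $\alpha_0\in(0,\pi/2)$ and an analytic extension $\widetilde T:\Sigma_{\alpha_0}\to\LH$ of the semigroup with $\sup_{\Sigma_{\alpha_0}\cap\DD}\|\widetilde T\|<\infty$. For fixed $\theta$ with $|\theta|<\alpha_0$, the map $t\mapsto\widetilde T(e^{i\theta}t)$ is a $C_0$-semigroup on $H^2(\CC_+)$ whose generator is $e^{i\theta}A$; since multiplication by a unimodular constant leaves the domain unchanged, $e^{i\theta}A$ is exactly the operator $f\mapsto(e^{i\theta}G)f'$. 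So it is enough to show that whenever $f\mapsto Hf'$ generates a $C_0$-semigroup on $H^2(\CC_+)$ the function $H$ satisfies \eqref{eq:xdudx}, and then to apply this with $H=e^{i\theta}G$ for each $|\theta|<\alpha_0$, finally fixing any $\alpha\in(0,\alpha_0)$ to obtain the closed interval in the statement.

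To prove that implication I would first recover the semiflow. The restriction of $\widetilde T$ to $\RR_+$ is a semigroup of bounded composition operators $C_{\phi_t}$, where $(\phi_t)$ is the semiflow generated by $G$ (\cite{BP}); arguing as in the proof of Corollary~\ref{cor:dir}, the composition-operator structure propagates to the whole sector by analyticity (an operator on $H^2(\CC_+)$ is a composition operator exactly when its adjoint maps every reproducing kernel to a reproducing kernel, a condition that depends analytically on $\xi$ and holds on $\RR_+$), so $\widetilde T(\xi)=C_{\phi_\xi}$ with $\xi\mapsto\phi_\xi$ analytic on $\Sigma_{\alpha_0}$. Then $t\mapsto\phi_{e^{i\theta}t}$ is a semiflow on $\CC_+$ with infinitesimal generator $e^{i\theta}G$, and the operators $\widetilde T(e^{i\theta}t)$ are bounded, so by \cite{Arv} the non-tangential limit $\angle\lim_{z\to\infty}e^{i\theta}G(z)/z$ exists (it equals $e^{i\theta}\delta$, where $\delta=\angle\lim_{z\to\infty}G(z)/z$) and the semigroup is quasicontractive. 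Theorem~\ref{thm:dec21} applied to $e^{i\theta}A$ then yields condition \eqref{eq:xdudx} for $e^{i\theta}G$.

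It remains to unwind \eqref{eq:xdudx} for $e^{i\theta}G$. Writing $\re(e^{i\theta}G)=u\cos\theta-v\sin\theta$, the inequality $x\,\partial_x\re(e^{i\theta}G)\le\re(e^{i\theta}G)$ is, for every fixed $y\in\RR$, equivalent to
\[
\partial_x\!\left(\frac{u\cos\theta-v\sin\theta}{x}\right)=\frac{x\,\partial_x(u\cos\theta-v\sin\theta)-(u\cos\theta-v\sin\theta)}{x^{2}}\le 0,
\]
i.e. to $x\mapsto(u\cos\theta-v\sin\theta)/x$ being decreasing on $(0,\infty)$. Since the set $|\theta|<\alpha_0$ of admissible angles is symmetric about $0$, replacing $\theta$ by $-\theta$ shows that $(u\cos\theta+v\sin\theta)/x$ is decreasing in $x$ for all such $\theta$, and in particular on $[-\alpha,\alpha]$, which is the assertion.

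The step I expect to be the main obstacle is the middle one: identifying the rotated semigroups with composition operators coming from the semiflow of $e^{i\theta}G$ — equivalently, showing directly that a $C_0$-semigroup on $H^2(\CC_+)$ generated by $f\mapsto Hf'$ forces $H$ to generate a semiflow on $\CC_+$. The algebraic characterisation of composition operators used for $H^2(\beta)$ in Section~\ref{sec:2} is not directly available here, since $H^2(\CC_+)$ contains neither the constants nor the monomials; one must instead work through reproducing kernels (or through multiplicativity on the subalgebra of those $f,g$ whose product is again in the space), or alternatively verify via the abstract Cauchy problem — using the identity $A(f\circ\psi_t)=(Af)\circ\psi_t$ for the local flow $\psi_t$ of $H$ — that $S(t)f=f\circ\psi_t$, so that the global boundedness of $S(t)$ forces $\psi_t$ to extend to a genuine semiflow on $\CC_+$. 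Once this point is settled, the rest is the routine bookkeeping indicated above.
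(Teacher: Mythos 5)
Your proposal is correct and follows essentially the same route as the paper: the paper's (two-line) proof simply rewrites \eqref{eq:xdudx} as $\frac{\partial}{\partial x}(u/x)\le 0$ and applies it to the rotated functions $Ge^{-i\theta}$, whose associated operators generate $C_0$-semigroups because the original semigroup is analytic. The point you flag as the main obstacle --- verifying that the rotated semigroups are again composition semigroups so that \eqref{eq:xdudx} applies to $e^{\pm i\theta}G$ --- is precisely what the paper leaves implicit, so your extra care there fills in detail rather than changing the argument.
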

\beginpf
Note that  the criterion \eqref{eq:xdudx} may be rewritten as 
\[
\frac{\partial}{\partial x} \left( \frac{u}{x} \right) \le 0.
\]
The  result now follows immediately on applying this to the functions $Ge^{-i\theta}$, which generate  $C_0$-semigroups.
\endpf

\subsection{Groups of composition operators on the half-plane}

It turns out that there are very few groups of composition operators on the half-plane.
\begin{prop}
Suppose that $A:f \to Gf'$ generates a $C_0$ quasicontractive group 
of bounded composition operators on $H^2(\CC_+)$. 
Then $G(z)=pz+iq$ for some real $p$ and $q$, and hence, for $t\in \RR,z\in \CC_+$, we get 
\beq\label{eq:solphit}
\phi_t(z)= ze^{pt}+\frac{iq}{p} \left( e^{pt}-1 \right)  
\eeq
if $p\neq 0$, and 
\beq\label{eq:2solphit}
\phi_t(z)= z+iqt
\eeq
if $p= 0$. 
\end{prop}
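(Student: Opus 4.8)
The plan is to exploit the two directions of the group hypothesis: if $A$ generates a $C_0$-group, then both $A$ and $-A$ generate $C_0$-semigroups of bounded composition operators, so Theorem~\ref{thm:dec21} applies to both $G$ and $-G$. First I would apply the necessary condition \eqref{eq:xdudx} (equivalently $\partial_x(u/x)\le 0$, in the notation $G=u+iv$, $z=x+iy$) to $G$ and then to $-G$; the two inequalities together force $\partial_x(u/x)=0$ on $\CC_+$, so that $u(x,y)/x$ depends only on $y$, say $u(x,y)=x\,\rho(y)$ for some real function $\rho$. Next I would use the quasicontractivity clause of Theorem~\ref{thm:dec21} applied to $G$ and $-G$: the non-tangential limit $\angle\lim_{z\to\infty}G(z)/z$ exists (call it $\delta$), and likewise for $-G$, which is automatic once the limit for $G$ exists.

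The core of the argument is then a rigidity statement: a holomorphic function $G$ on $\CC_+$ whose real part has the special form $\re G(x+iy)=x\,\rho(y)$ must be affine, $G(z)=pz+iq+ic$ with $p,q$ real (absorbing constants). To see this I would argue that $\re(G(z)-pz)$ is a bounded harmonic function on $\CC_+$ for a suitable real $p$: indeed, write $H(z):=G(z)/z$, which is holomorphic on $\CC_+$ and, by the existence of the angular limit at $\infty$ together with the semigroup bound, is bounded near $\infty$; combined with $\re G = x\rho(y)$ one checks $\re H$ depends only on the argument of $z$ in an appropriate sense, and a positive/bounded harmonic function on a half-plane that is constant on rays through the origin must be constant. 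Concretely, the cleanest route is: $\re G(z)=x\rho(y)$ says $\re G$ is harmonic and linear in $x$ for each fixed $y$; harmonicity $\partial_{xx}\re G+\partial_{yy}\re G=0$ then gives $0+x\rho''(y)=0$, so $\rho''\equiv 0$, i.e. $\rho(y)=ay+b$ is affine. Hence $\re G(x+iy)=ax y+bx$, and the harmonic conjugate is computed directly (Cauchy--Riemann) to give $v(x,y)=\tfrac{a}{2}(x^2-y^2)-ay\cdot(\text{const})+\dots$; requiring $G$ to extend holomorphically and have a finite angular derivative at $\infty$ kills the quadratic term, forcing $a=0$. So $\re G(z)=bx$, whence $G(z)=bz+i\gamma$ for a real constant $\gamma$; set $p=b$, $q=\gamma$.

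Once $G(z)=pz+iq$ is established, the formulas \eqref{eq:solphit} and \eqref{eq:2solphit} follow by solving the linear ODE \eqref{eq:ivp}, $\dot\phi_t(z)=p\,\phi_t(z)+iq$ with $\phi_0(z)=z$: for $p\neq 0$ the integrating-factor solution is $\phi_t(z)=ze^{pt}+\tfrac{iq}{p}(e^{pt}-1)$, and for $p=0$ one gets $\phi_t(z)=z+iqt$ immediately. I would also remark that these $\phi_t$ indeed map $\CC_+$ into $\CC_+$ for all $t\in\RR$ (they are affine maps preserving the real part up to the factor $e^{pt}>0$), which is consistent with the group property; and that $\angle\lim_{z\to\infty}G(z)/z=p$ gives $\|C_{\phi_t}\|=e^{-pt/2}$, confirming quasicontractivity of the group.

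\textbf{Main obstacle.} The delicate step is the rigidity claim that $\re G(x+iy)=x\rho(y)$ plus mild growth forces $G$ affine. The harmonicity trick ($\rho''\equiv 0$) is clean, but one must be careful that $G$ is genuinely holomorphic on all of $\CC_+$ (so that $\re G$ is genuinely harmonic there) and that the angular-derivative hypothesis at $\infty$ is strong enough to discard the surviving quadratic/unbounded terms in the harmonic conjugate; handling the conjugate and the behaviour at $\infty$ carefully is where the real work lies.
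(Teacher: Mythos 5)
Your proposal is correct and follows essentially the same route as the paper: apply \eqref{eq:xdudx} to both $G$ and $-G$ to get $u=x\rho(y)$, deduce that $\rho$ is affine (you via harmonicity $x\rho''=0$, the paper via the Cauchy--Riemann separation $F'(y)x=-E'(x)$ -- equivalent computations), recover $G(z)=-iaz^2/2+bz+ic$, kill the quadratic term using the existence of $\angle\lim_{z\to\infty}G(z)/z$ from Theorem~\ref{thm:dec21}, and solve the linear ODE. No gaps; the digression about bounded harmonic functions constant on rays is unnecessary once you settle on the harmonicity argument.
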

\beginpf
Once again we begin  with condition \eqref{eq:xdudx}, applying it to $G$ and $-G$, to obtain
\[
x \frac{\partial u}{\partial x}=u.
\]
The solution to this is $u=F(y)x$ for some smooth real function of $y$. The Cauchy--Riemann equations
imply that $\dfrac{\partial v}{\partial y}=F(y)$; that is, $v=\int F \, dy + E(x)$ for some function $E$. Likewise,
\[
\frac{\partial u}{\partial y}=F'(y)x=-E'(x),
\]
and thus $F(y)=ay+b$ and $E(x)=-ax^2/2+c$ for some real constants $a$, $b$ and $c$.
We conclude that $G(z)=  - iaz^2/2+bz-ic$. Now Theorem~\ref{thm:dec21} implies that $a=0$ and the result 
for $G$ follows.
It is now clear from \eqref{eq:ivp} that $\phi_t$ is as given in \eqref{eq:solphit} or \eqref{eq:2solphit}.
\endpf

\begin{ex}
Taking $G(z)=pz+iq$ with $p,q\in \RR$ and $q\neq 0$,  we obtain a group which is not analytic. 

Moreover, taking $G(z)=1-z$ (so that $\phi_t(z)=e^{-t}z+1-e^{-t})$, we get an example of $C_0$-semigroup which is neither a group nor analytic.
\end{ex}

\begin{rem}{\rm
	The question of characterizing the compact semigroups is not relevant, since no composition operator on the Hardy space of the half-plane is  compact \cite[Cor. 3.3]{EJ}.  }
\end{rem}

\section*{Acknowledgement}
The authors are grateful to the referee for several comments allowing them to improve the
paper. In particular, the third condition of Theorem \ref{th:compact} has been simplified.

\end{document}